\numberwithin{equation}{section}
\numberwithin{figure}{section}
\newtheorem{theorem}{Theorem}[section]
\newtheorem{corollary}[theorem]{Corollary}
\newtheorem{lemma}[theorem]{Lemma}
\newtheorem{proposition}[theorem]{Proposition}
\theoremstyle{definition}
\newtheorem{definition}[theorem]{Definition}
\newtheorem{example}[theorem]{Example}
\newtheorem{question}[theorem]{Question}
\theoremstyle{remark}
\newtheorem{remark}[theorem]{Remark}
\def\eps{\epsilon}
\newcommand{\N}{\mathbb{N}}
\newcommand{\Q}{\mathbb{Q}}
\newcommand{\R}{\mathbb{R}}
\newcommand{\Z}{\mathbb{Z}}
\newcommand{\restr}[1]{\lower3pt\hbox{$|_{#1}$}}
\newcommand{\diam}{\mathop{\rm diam}\nolimits}
\newcommand{\inter}{\mathop{\rm int}\nolimits}
\newcommand{\cl}{\mathop{\rm cl}\nolimits}
\newcommand{\dist}{\mathop{\rm dist}\nolimits}
\newcommand{\distr}{\Delta}
\newcommand{\metr}{{\bf g}}
\newcommand{\VecM}{{\rm Vec}(M)}
\newcommand{\lam}{\lambda}
\newcommand{\del}{\delta}
\begin{document}

\title[Assouad dimension, Nagata dimension, and metric tangents]
{Assouad dimension, Nagata dimension, and uniformly close metric tangents}

\author{Enrico Le Donne}
\author{Tapio Rajala}
\address{Department of Mathematics and Statistics \\
         P.O. Box 35 (MaD) \\
         FI-40014 University of Jyv\"askyl\"a \\
         Finland}
\email{enrico.e.ledonne@jyu.fi}
\email{tapio.m.rajala@jyu.fi}

\thanks{T.R. acknowledges the support of the Academy of Finland project no. 137528.}
\renewcommand{\subjclassname}{%
 \textup{2010} Mathematics Subject Classification}
\subjclass[]{ 
54F45,   
53C23, 
54E35, 
53C17. 
}
\keywords{Assouad dimension, Nagata dimension,  Metric tangents, SubRiemannian manifolds}
\date{December 25, 2013}

\begin{abstract}
We study the Assouad dimension and the Nagata dimension of metric spaces.
As a general result, we prove that the Nagata dimension of a metric space is always bounded from above by the Assouad dimension.
Most of the paper is devoted to the study of when 
 these metric dimensions of a metric space are locally given by the dimensions of its metric tangents. 
Having uniformly close tangents is not sufficient.
  What is needed in addition is either 
that the tangents have dimension with uniform constants independent from the point and the tangent, or that the tangents are unique.
We will apply our results to equiregular subRiemannian manifolds and show that locally their Nagata dimension equals the topological dimension.
\end{abstract}

\maketitle
\tableofcontents
\newpage
\section{Introduction}

Assouad dimension, Nagata dimension, and metric tangents are relevant in the program of doing analysis in the metric space setting.
The Assouad dimension is a quantification of the doubling property. It plays a role, for example, in the study of spaces that are
quasisymmetrically embeddable in Euclidean spaces, of fractal sets, and of boundaries of groups,
\cite{Assouad79,Assouad83,Luukkainen,Keith-Laakso,Mackay}. 
The Nagata dimension, introduced in \cite{Nagata58,Assouad82}, is a local-and-global metric version of the topological dimension.
The bounded-scale version is called linearly controlled dimension and the large-scale
version is called asymptotic Assouad-Nagata dimension. Such dimensions are relevant for embeddings,
in particular in Geometric Group Theory, \cite{Buyalo-Dranishnikov-Schroeder}.
Nagata dimension links with quasisymmetric embedding into metric trees and with Lipschitz
extension properties, \cite{Lang-Schlichenmaier,Wenger-Young}.

In \cite{Lang-Schlichenmaier} it has been shown that doubling metric spaces have finite Nagata dimension.
In this paper, we prove the sharp bound:
\begin{theorem}\label{thm:nagaasso}
 For all metric spaces $X$, the Nagata dimension of $X$ is less than or equal to the Assouad dimension of $X$.
\end{theorem}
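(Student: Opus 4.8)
The plan is to work from the covering formulations of both dimensions and to reduce the statement to a single scale-uniform combinatorial assertion. Recall that $\dim_A X$ is the infimum of those $Q$ for which there is a constant $C$ such that every subset of diameter $R$ can be covered by at most $C(R/r)^Q$ sets of diameter $r$ for all $0<r\le R$; and that $\dim_N X\le n$ means that there is a constant $c$ so that for every scale $s>0$ the space $X$ admits a cover of diameter at most $cs$ whose $s$-multiplicity is at most $n+1$, equivalently a cover splitting into $n+1$ subfamilies each of which is $s$-disjoint (distinct members of a subfamily lie at mutual distance $>s$). With these descriptions at hand, I would first record the elementary reduction: it suffices to prove that for every integer $m>\dim_A X$ there is a constant $c=c(m,X)$ so that at every scale $s$ one can cover $X$ by sets of diameter $\le cs$ with $s$-multiplicity at most $m$. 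This gives $\dim_N X\le m-1$ for every integer $m>\dim_A X$, and since $\dim_N$ is integer-valued, letting $m$ decrease to $\dim_A X$ yields $\dim_N X\le\dim_A X$.

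Fix a scale $s$ and a real $Q$ with $\dim_A X<Q<m$, together with the corresponding Assouad constant $C$. The first, soft, step is to produce \emph{some} cover with bounded multiplicity, recovering the finiteness result of Lang--Schlichenmaier. I would take a maximal $s$-separated net $\{x_i\}_{i\in I}$ and consider the cover by the balls $B(x_i,Ls)$ for a fixed large $L$. The Assouad bound, applied at the pair of scales $s$ and $Ls$, shows that any ball of radius $Ls$ contains at most $C(2L)^{Q}$ net points, since these form an $s$-packing; hence each point of $X$ lies in boundedly many of the balls and the cover has $s$-multiplicity at most some $K\le C(2(L+1))^{Q}$. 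This already suffices for finiteness of $\dim_N X$, but the constant $K$ is exponential in $Q$ and therefore far from the target $m\approx\lceil Q\rceil$.

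The heart of the argument, and the step I expect to be the main obstacle, is to sharpen this multiplicity from the packing constant $K$ down to $m$. The idea is to refine the net cover while cutting its overlap order one unit at a time, in analogy with the classical fact that a grid decomposition of $\R^{m}$ reorganizes into $m+1$ mutually separated families of bricks. Concretely, I would organize the nets at the dyadic scales $2^{-k}s$ into the hierarchical (Christ-type) cube structure available in any doubling space, and then reassign the resulting pieces by an inductive, general-position refinement in which, at each stage, one merges and slightly reshapes the current sets so as to remove one extra layer of overlap, using the Assouad estimate to guarantee that the reshaped sets keep controlled diameter and remain genuinely $s$-separated within each color class. The crucial feature to exploit is the exact scale invariance of the Assouad bound: the same construction, run self-similarly across all scales $2^{-k}s$, keeps every constant independent of $k$ and of the base scale $s$, which is precisely what is needed for a uniform Nagata constant $c$.

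The delicate point throughout is that bounded multiplicity alone (equivalently, bounded degree of the net interaction graph) only yields a number of colors comparable to $K$; extracting the sharp value $m$ forces one to use the Assouad exponent \emph{quantitatively} rather than merely its finiteness, and to control how the diameter constant degrades at each reshaping stage. I would therefore expect the technical core to be a lemma asserting that a cover of $s$-multiplicity $k+1$ by sets of diameter $\le c_k s$ can, whenever $k>Q$, be refined to one of $s$-multiplicity $k$ by sets of diameter $\le c_{k+1}s$, with $c_{k+1}$ depending only on $c_k$, $Q$ and $C$; iterating this from the initial multiplicity $K$ downward then reaches $s$-multiplicity $\lfloor Q\rfloor+1\le m$, which completes the proof.
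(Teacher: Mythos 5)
Your reduction to the statement ``for every integer $m>\dim_A X$ there is a single constant $c$ such that at every scale $s$ one has a $cs$-bounded cover of $s$-multiplicity at most $m$'' is correct, and your soft first step (a maximal $s$-separated net gives a cover of multiplicity bounded by a packing constant $K\approx C(2L)^{Q}$) is fine and does recover the Lang--Schlichenmaier finiteness result. But the entire content of the theorem lies in the step you defer, and there you have not given an argument: the proposed mechanism of organizing dyadic nets into Christ-type cubes and then performing an ``inductive, general-position refinement'' that ``merges and slightly reshapes'' sets to ``remove one extra layer of overlap'' is an aspiration, not a construction. In particular, your candidate core lemma --- that any cover of $s$-multiplicity $k+1$ by $c_ks$-bounded sets can be refined to one of $s$-multiplicity $k$ whenever $k>Q$ --- is stated for arbitrary covers, is not obviously true in that generality, and you give no indication of \emph{why} the hypothesis $k>Q$ would enter; nothing in the proposal converts the exponent $Q$ into a one-unit drop in overlap. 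The induction in the paper is not on the multiplicity of a cover at all, but on the exponent in the covering estimate.

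Concretely, the paper's mechanism is a pigeonhole over annuli. Fix a maximal $\tfrac{R}{4}$-separated net $\{x_n\}$ and cover each $B(x_n,R)$ by at most $C(R/r)^{\alpha}$ balls of radius $r$. The $\sim R/r$ disjoint annuli $A_{n,i}=B(x_n,R-ir)\setminus B(x_n,R-(i+1)r)$ each meet a given small ball at most $3$ times, so some annulus $A_{n,k_n}$ is covered by only $\lesssim C(R/r)^{\alpha-1}$ of them; the union $X_1=\bigcup_n A_{n,k_n}$ then satisfies a covering bound with exponent $\alpha-1$ (after accounting, via a packing estimate, for the boundedly many net points near any given point), while the complementary cores $B(x_n,R-(k_n+1)r)\setminus(X_1\cup\cdots)$ form a single $2R$-bounded, $r$-separated color class. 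Iterating this ``codimension-one peeling'' $\lfloor\alpha\rfloor+1$ times drives the exponent below zero and empties the residual set, producing exactly $\lfloor\alpha\rfloor+1$ separated families. This is the quantitative use of the Assouad exponent that your proposal correctly identifies as necessary but does not supply; without it (or an actual proof of your refinement lemma), the argument does not go through.
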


Metric tangents provide a way of studying the infinitesimal properties of metric spaces.
Limits of metric spaces were introduced by Gromov in the setting of Geometric Group Theory to
study asymptotic cones of groups of polynomial growth \cite{Gromov-polygrowth}.
However, many of the results hold in the context
of metric spaces with finite Assouad dimension
and have applications in different areas of mathematics, for instance in the study of
limits of Riemannian manifolds with curvature bounds and Reifenberg-flat metric spaces,
see \cite{Cheeger-Colding}, \cite{David-Toro}, and subsequent work.
Additional results regarding tangents of general metric spaces can be found in \cite{Hanson-Heinonen,LeDonne6,Herron_tan}.

In this paper we study how and when one can deduce the Nagata dimension or the Assouad dimension 
of a space, knowing the respective dimensions of its tangents.
A concrete application of our results is given by the Lipschitz extension problem for
subRiemannian manifolds.
Lang and Schlichenmaier gave a connection of the Lipschitz extension problem with the Nagata dimension and
the Lipschitz connection property.
Lipschitz connectivity and 
 Lipschitz homotopy groups have recently been studied  in
\cite{DeJarnette-Hajlasz-Lukyanenko-Tyson,Wenger-Young,Hajlasz-Schikorra}.
By the results in this paper, knowing now the Nagata dimension of equiregular subRiemannian manifolds,
one can deduce for example that a partially defined Lipschitz map $f \colon A   \to Y$
from a subset $A$ of an equiregular subRiemannian manifold $M$ can be Lipschitz extended on compact sets of
$M$  if $Y$ is Lipschitz $m$-connected for all $m$ strictly smaller than the topological
dimension of $M$.
For such applications see \cite[Theorem 1.5 and  Theorem 1.6]{Lang-Schlichenmaier}.
Another property one  deduces for a space $(X,d)$ with Nagata dimension at most some number $n \in \N$
is that for sufficiently small $p \in (0,1)$ there exists a bi-Lipschitz embedding of $(X,d^p)$
 into the product of $n + 1$ metric trees, see \cite[Theorem 1.3]{Lang-Schlichenmaier}.
\\

Let us now present in detail our results on tangent spaces.
Let $X$ be a metric space. 
For each $x\in X$, let ${\rm Tan}(X,x)$ be the collection of all the metric spaces tangent to $X$ at $x$, in the pointed Gromov-Hausdorff sense.
We say that   $X$ has {\em uniformly close tangents} if, for each $x\in X$, the convergence of the dilated spaces of $(X,x)$ toward  ${\rm Tan}(X,x)$ is uniform. In other words and more generally,
we say that on a subset $K\subset X$  the {\em convergence to tangents is uniform} if,
 for all $
\eps>0$,  there exists $\lambda_\eps>0$ such that, for all $k\in K$ and all $\lambda>\lambda_\eps$, there exists 
 a tangent $Y$ of $X$ at $k$ with
$${\rm Dist}_{GH}( (\lambda X,k), Y) <\eps.$$
Here ${\rm Dist}_{GH}$ is a specific distance that we fix in Section \ref{sec:GH} to induce the Gromov-Hausdorff topology on pointed metric spaces and
$\lambda X$ is the metric space $(X, \lambda d_X).$
The condition of uniform convergence to tangents is motivated by the fact that
this is what happens on equiregular subRiemannian manifolds, see Theorem \ref{Mitchell:thm}.

Assuming uniform convergence towards unique tangents, our first result for  the Nagata dimension, which we denote by $\dim_N$,   is the following:
\begin{theorem}\label{teorema1}
Let $X$ be a metric space that at every point admits a single tangent space.
Let $Y \subseteq X$ be a relatively compact set with $\dim_NY < \infty$.
Assume that the convergence toward the tangents is uniform on the closure of $Y$ (as just defined).
Then we have
\[
 \sup_{x \in \inter Y} \dim_N T_xX \le \dim_NY \le \sup_{x \in \cl{Y}} \dim_N T_xX.
\]
\end{theorem}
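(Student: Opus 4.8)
The plan is to reduce everything to a single \emph{transfer principle} for Nagata covers. Recall that $\dim_N X\le n$ means that there is a constant $c$ so that for every $s>0$ the space $X$ admits a covering by sets of diameter $\le cs$ such that every subset of diameter $\le s$ meets at most $n+1$ of them; the feature we exploit is that this notion is invariant under dilation, $\dim_N(\lambda X)=\dim_N X$ with the \emph{same} constant $c$, since $s\mapsto s/\lambda$ turns a scale-$s$ cover into a scale-$(s/\lambda)$ cover. First I would establish the transfer principle: if $(X_j,p_j)\to(X_\infty,p_\infty)$ in the pointed Gromov--Hausdorff sense and $\dim_N X_j\le n$ holds with one common constant $c$, then $\dim_N X_\infty\le n$ with constant $c$. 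One proves this by exhausting $X_\infty$ by balls, pulling the scale-$s$ covers of the $X_j$ back through the approximating almost-isometries, and passing to a limit of covers (a diagonal argument over the exhaustion, using local finiteness from doubling) so that the multiplicity $n+1$ is preserved and the diameter bound degrades only by $\epsilon\to 0$. The same argument run \emph{downward} shows that a scale-$s$ cover of the limit transfers to the approximating spaces, on any prescribed ball, with constant $c+\epsilon$; this is the comparison tool between a space and its tangents.

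For the lower bound $\sup_{x\in\inter Y}\dim_N T_xX\le \dim_N Y$, fix $x\in\inter Y$ and set $m:=\dim_N Y$. Since $x$ is interior there is $r_0>0$ with $B(x,r_0)\subseteq Y$, so for every $R$ the balls $B(x,R)$ in $\lambda X$ and in $\lambda Y$ coincide once $\lambda>R/r_0$; hence the dilations $(\lambda Y,x)$ converge to the unique tangent $T_xX$ as well. Each $\lambda Y$ satisfies $\dim_N(\lambda Y)=m$ with one fixed constant $c$, so the transfer principle gives $\dim_N T_xX\le m$ directly. Taking the supremum over $x\in\inter Y$ yields the left inequality. (One could alternatively invoke self-similarity of the unique tangent to build global covers of the unbounded model, but the exhaustion already produces a scale-$s$ cover for every $s$.)

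For the upper bound I would first upgrade the pointwise hypothesis $\dim_N T_xX\le n_0:=\sup_{x\in\cl Y}\dim_N T_xX$, which we may assume finite, to a \emph{uniform} statement: there is a single constant $c_0$ realising $\dim_N T_xX\le n_0$ for all $x\in\cl Y$. This is exactly where uniqueness of tangents enters. Uniqueness together with uniform convergence makes $x\mapsto T_xX$ continuous into the pointed Gromov--Hausdorff topology, so by the downward transfer principle a cover of $T_xX$ with constant $c_x$ transfers to all nearby $T_{x'}X$ with constant $c_x+1$; covering the compact set $\cl Y$ by finitely many such neighbourhoods produces the uniform $c_0$. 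It then remains to globalise: to produce, for each small $s$, a cover of $Y$ of diameter $\le c_0's$ and $s$-multiplicity $\le n_0+1$.

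This globalisation is the main obstacle, precisely because crude patching of local model-covers coming from \emph{different} tangents would compound the multiplicity and only give a bound of the form $C(n_0+1)$, destroying sharpness; this is the phenomenon behind the warning that uniformly close tangents alone do not suffice. I would instead argue by contradiction and blow-up. If no constant $c_0'$ works, there are scales $s_j\to 0$ and sets $A_j\subseteq Y$ of diameter $\le s_j$ meeting at least $n_0+2$ members of every candidate cover of diameter $\le c_0's_j$. Rescaling by $1/s_j$ and basing at points $a_j\in A_j$, compactness of $\cl Y$ extracts a convergent subsequence of $(\tfrac1{s_j}Y,a_j)$, and uniform convergence to tangents forces the limit to be the tangent $T_{x_\infty}X$ at an accumulation point $x_\infty\in\cl Y$, which satisfies $\dim_N\le n_0$ with constant $c_0$. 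Transferring a scale-$1$ cover of $T_{x_\infty}X$ back down through the almost-isometry then supplies, for large $j$, a cover with multiplicity $\le n_0+1$ around $A_j$, contradicting the choice of $A_j$. The large scales are trivial because $Y$ is bounded and the finitely many intermediate scales are absorbed into the constant, giving $\dim_N Y\le n_0$. The delicate points I expect to fight with are ensuring the blow-up limit lands on a genuine, dimension-controlled tangent uniformly in the base point, which is exactly what uniqueness plus uniform convergence buys, and keeping the bookkeeping of diameters and multiplicities so that the final multiplicity is $n_0+1$ on the nose rather than a constant multiple of it.
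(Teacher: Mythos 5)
Your lower bound is correct and is essentially the paper's argument: dilations of $Y$ based at an interior point converge to $T_xX$, each dilated copy has the same Nagata dimension with the same constant, and lower semicontinuity of $\dim_N$ under pointed Gromov--Hausdorff limits with uniform constants (the paper's Corollary~\ref{cor:Nagatalower}) finishes. Your uniformization of the constant $c_0$ over $\cl Y$ via continuity of $x\mapsto T_xX$ and compactness is also sound in spirit and close to what the paper does locally.

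The upper bound, however, has a genuine gap in the globalisation step. First, the contradiction is set up with the quantifiers reversed: the negation of $\dim_N Y\le n_0$ with constant $c_0'$ at scale $s_j$ says that \emph{for every} $c_0's_j$-bounded cover there \emph{exists} a set $A$ of diameter $\le s_j$ meeting $n_0+2$ of its members; the bad set depends on the cover, so there is no single $A_j$ to base the blow-up at, and exhibiting one good cover ``around $A_j$'' contradicts nothing. Second, and more fundamentally, the blow-up cannot circumvent the local-to-global passage: failure of the Nagata property is a statement about covers of all of $Y$, and good covers of small balls near a single accumulation point do not assemble into a good cover of $Y$ without a gluing mechanism. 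The paper supplies exactly this mechanism: the quantified finite-union theorem (Lemma~\ref{lma:twounion} and Corollary~\ref{cor:Npart}, after \cite[Theorem 2.7]{Lang-Schlichenmaier}) shows that a union of \emph{finitely many} sets of Nagata dimension $n$ still has dimension $n$ --- not $C(n+1)$ multiplicity --- at the cost of worsening only the constant $c$. This is precisely the fact that defuses your worry about compounding multiplicities. It is combined (in Lemma~\ref{lma:nagatafinalconclusion}) with the hypothesis $\dim_N Y<\infty$, which furnishes a coarse decomposition of $Y$ into $\overline{n}+1$ separated families of pieces of diameter $\le r/2$, each piece sitting in a ball whose dimension is controlled by the tangent via the transfer lemma; separated unions preserve the bound (Lemma~\ref{lma:dimforseparated}) and the finite gluing closes the argument. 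Note that your proposal never uses the hypothesis $\dim_N Y<\infty$, which the paper's proof needs essentially at this stage --- a reliable sign that the globalisation is missing rather than merely under-explained.
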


Here $\inter Y$ and $\cl{Y}$ denote the interior and the closure of the set $Y$, respectively.
From Example \ref{ex:nagata} we see that the assumption on uniqueness of tangents is necessary in Theorem \ref{teorema1}.
Relative compactness of $Y$ is needed already to handle the large scales, and the necessity of the interior of $Y$
is seen by taking $Y = \{0\}$ and $X = \R$.
We shall prove Theorem~\ref{teorema1} at the end of Section \ref{Sec:Nagata_uniform_tangents} as a consequence of 
Theorem~\ref{thm:uniformtangents}, with assumption $(ii)$.
An application of Theorem~\ref{teorema1} is the following result, which was actually our initial goal
(the notion of equiregular subRiemannian manifold is recalled in  Section \ref{sec:equiSR}).

\begin{corollary}\label{corollarioSRM}
 Let  $(M,d_{cc})$ be an equiregular subRiemannian manifold. 
  Then the  Nagata   dimension  of any open bounded nonempty subset of $M$ equals   the topological dimension of the manifold.
\end{corollary}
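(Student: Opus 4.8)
The plan is to derive Corollary \ref{corollarioSRM} directly from Theorem \ref{teorema1} by verifying its hypotheses for an equiregular subRiemannian manifold $(M,d_{cc})$. Let $U \subseteq M$ be an open bounded nonempty subset. I would set $Y = U$, so that $\inter Y = U$ and $\cl Y$ is the compact closure $\overline U$ (using that $M$ is locally compact and $U$ is bounded, hence $\overline U$ is compact and $Y$ is relatively compact). The key structural input is Mitchell's theorem (Theorem \ref{Mitchell:thm}), which asserts that at every point $x \in M$ the metric tangent $T_xM$ exists, is unique, and is a Carnot group whose homogeneous (Hausdorff) dimension coincides with a fixed integer $Q$ determined by the equiregular stratification; moreover this theorem is precisely the assertion that the convergence toward tangents is uniform on compact sets. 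Thus the uniqueness-of-tangents hypothesis and the uniform-convergence hypothesis of Theorem \ref{teorema1} both hold on $\overline U$.

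The next step is to compute $\dim_N T_xM$ for each $x$. Since $T_xM$ is a Carnot group, it is in particular a doubling metric space of finite Assouad dimension, so by Theorem \ref{thm:nagaasso} its Nagata dimension is finite; this verifies the finiteness assumption $\dim_N Y < \infty$ once we know it for $Y$ itself, which follows because $\overline U$ is a compact (hence doubling) subset of the doubling space $M$ and Theorem \ref{thm:nagaasso} again applies. I would then identify $\dim_N T_xM$ with the topological dimension of $M$. On one hand, the Nagata dimension always dominates the topological dimension, since Nagata dimension $n$ forces topological (covering) dimension at most $n$, and the Carnot group $T_xM$ is homeomorphic to $\R^{\dim M}$, giving $\dim_N T_xM \ge \dim M$. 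On the other hand, the tangent Carnot group, being a smooth manifold diffeomorphic to $\R^{\dim M}$ with a left-invariant homogeneous metric, admits Nagata-dimension bounds realizing exactly the topological dimension; this is the point where one invokes that Carnot groups (equivalently, by Mitchell, the local model spaces) have Nagata dimension equal to their topological dimension $\dim M$, independent of $x$.

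Feeding these computations into Theorem \ref{teorema1}, both the supremum over $\inter Y = U$ and the supremum over $\cl Y = \overline U$ of the quantity $\dim_N T_xM$ equal the constant $\dim M$ (the topological dimension of the manifold), since this value does not depend on the base point $x$. The sandwiching inequality
\[
 \sup_{x \in \inter Y} \dim_N T_xX \le \dim_N Y \le \sup_{x \in \cl Y} \dim_N T_xX
\]
then collapses to $\dim M \le \dim_N U \le \dim M$, yielding $\dim_N U = \dim M$, which is the assertion.

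I expect the main obstacle to be the point-independent evaluation $\dim_N T_xM = \dim M$ for Carnot groups, and in particular the lower bound $\dim_N T_xM \ge \dim M$: one must connect Nagata dimension to topological dimension and exploit that the tangent is homeomorphic to Euclidean space of the right dimension. The uniform-convergence and uniqueness hypotheses are handed to us essentially for free by Mitchell's theorem, so the conceptual work lies entirely in the dimension computation for the Carnot-group tangents and in checking that the resulting value is genuinely constant across $\overline U$, which is what makes the two suprema in Theorem \ref{teorema1} coincide.
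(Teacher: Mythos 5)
Your proposal is correct and follows essentially the same route as the paper: it verifies the hypotheses of Theorem~\ref{teorema1} using the Mitchell--Bella\"iche theorem (Theorem~\ref{Mitchell:thm}) for uniqueness and uniform convergence of tangents and the local doubling property for finiteness of the Nagata dimension, and then evaluates the tangent dimensions via the fact that Carnot groups have Nagata dimension equal to their topological dimension (Theorem~\ref{Lang_LeDonne:thm}), exactly as the paper does.
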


Corollary \ref{corollarioSRM} relies also on a result by Urs Lang and the first-named author which states that the Nagata dimension of a Carnot group equals its topological dimension. For completeness, we include a short proof of this fact in Section \ref{sec:equiSR}.
\\

We prove the following analog of Theorem~\ref{teorema1} for the Assouad dimension, which we denote by $\dim_A$.
\begin{theorem}\label{teorema1b}
Let $X$ be a metric space that at every point admits a single tangent space.
Let $Y \subseteq X$ be a relatively compact set.
Assume that the convergence toward the tangents is uniform on $Y$ (as defined above).
Then we have
\[
 \sup_{x \in \inter Y} \dim_A T_xX \le \dim_AY \le \sup_{x \in \cl{Y}} \dim_A T_xX.
\]
\end{theorem}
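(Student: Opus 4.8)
The plan is to prove the two inequalities separately. The left-hand inequality is soft and uses neither the uniqueness of tangents nor the uniform convergence; all the content, and all the hypotheses, go into the right-hand inequality.

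For the lower bound $\sup_{x \in \inter Y}\dim_A T_xX \le \dim_AY$ I argue pointwise. Fix $x \in \inter Y$ and pick $\rho>0$ with $\overline{B}(x,\rho)\subseteq Y$. Since tangents are a local notion, the dilations $(\lambda X,x)$ and $(\lambda\, \overline B(x,\rho),x)$ agree on balls of any prescribed radius once $\lambda$ is large, so $T_xX$ is also a tangent of $\overline B(x,\rho)$. Now $\dim_A$ is invariant under dilations and is lower semicontinuous under pointed Gromov--Hausdorff limits, so it does not increase upon passing to a tangent; together with monotonicity under taking subsets this gives $\dim_A T_xX \le \dim_A \overline B(x,\rho)\le \dim_AY$. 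Taking the supremum over $x\in\inter Y$ yields the claim.

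For the upper bound set $s:=\sup_{x\in\cl Y}\dim_A T_xX$ and assume $s<\infty$, the case $s=\infty$ being vacuous. Writing $N(B,r)$ for the number of balls of radius $r$ needed to cover $B$, the goal is to produce, for each $\eps>0$, a constant $C$ with $N(B_X(x,R),r)\le C(R/r)^{s+\eps}$ for all $x\in Y$ and all $0<r\le R$; letting $\eps\to0$ then gives $\dim_AY\le s$. The decisive preliminary is to make the control furnished by the tangents \emph{uniform} over $\cl Y$, and this is exactly where uniqueness enters. As in the proof of Theorem~\ref{teorema1}, uniform convergence toward a single tangent forces $k\mapsto T_kX$ to be continuous on the compact set $\cl Y$: fixing a large $\lambda$ one combines the uniform estimate ${\rm Dist}_{GH}((\lambda X,k),T_kX)<\eps$ with the elementary bound ${\rm Dist}_{GH}((\lambda X,k),(\lambda X,k'))\le \lambda\, d_X(k,k')$. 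Hence the pointed spaces $\{T_kX:k\in\cl Y\}$ form a compact family, whose members all have Assouad dimension at most $s$. Transferring a covering of a tangent $T_kX$ through a Gromov--Hausdorff approximation of quality $\eps'$ turns an $r'$-net into an $(r'+c\eps')$-net of the dilated ball $B_{\lambda X}(k,R')$; since uniform convergence makes $\eps'$ uniform in $k$, this yields a single scale $\rho_0>0$ and constants, independent of $k\in\cl Y$, controlling $N(B_X(k,\rho),r)$ for small $\rho$. One then chains these small-scale estimates across dyadic scales to reach all $0<r\le R\le\rho_0$, while the large scales are absorbed using relative compactness of $Y$: the compact set $\cl Y$ is covered by a fixed finite number of $\rho_0$-balls, so $N(B_X(x,R),\rho_0)$ is bounded independently of $x$ and $R$, and composing the two regimes produces the global estimate.

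I expect the main obstacle to be precisely the passage from the infinitesimal control at the tangents to a genuinely uniform \emph{multi-scale} estimate on $Y$. Each transfer step introduces an additive Gromov--Hausdorff error, and one must keep this error small relative to the scale at \emph{every} step of the iteration, so that the accumulated overhead contributes at most $\eps$ to the exponent; equivalently, one must show that the single-scale covering exponents of the compact tangent family are controlled uniformly as the scale tends to zero. This is the point at which the abstract compactness of $\{T_kX:k\in\cl Y\}$ is delicate, and it is exactly the difficulty the paper isolates when it observes that uniformly close tangents alone do not suffice and that one must add either uniform dimension constants or uniqueness. For this reason I would, in parallel with Theorem~\ref{teorema1}, deduce Theorem~\ref{teorema1b} from the corresponding master statement (the analogue of Theorem~\ref{thm:uniformtangents}) under its assumption~$(ii)$, where this uniformity is established once and for all.
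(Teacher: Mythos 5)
Your top-level reduction coincides with the paper's: the left inequality follows from lower semicontinuity of $\dim_A$ under pointed Gromov--Hausdorff limits (Corollary \ref{cor:Assouadlower}) applied to a closed ball $\overline B(x,\rho)\subseteq Y$, and the right inequality is delegated to the master statement for unique tangents (Theorem \ref{thm:assouad2}). The gap is in the mechanism you sketch for that master statement. You propose to obtain uniformity from continuity of $k\mapsto T_kX$, so that $\{T_kX : k\in\cl Y\}$ is a compact family of spaces each of Assouad dimension at most $s$, and then to transfer coverings of each $T_kX$ back into $X$. But transferring a covering of $T_kX$ requires its Assouad \emph{constant} $C_k$, and compactness of the family does not bound $k\mapsto C_k$: covering bounds pass to GH limits only in the direction of Lemma \ref{lem_ass_limit} (a uniform bound on the approximating spaces survives in the limit), i.e.\ the optimal constant is lower semicontinuous along GH convergence, and a lower semicontinuous function on a compact set need not be bounded above. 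Example \ref{ex:assouad} is built precisely on tangents of dimension $0$ whose constants blow up, so continuity of the tangent assignment is not what rescues the unique-tangent case.

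The paper's actual device in Theorem \ref{thm:assouad2} never makes the constants uniform over $K$. It fixes one point $x$, uses only the single constant $C_x$ of $T_xX$ to produce one covering of $B_{T_xX}(\star,2)$ by $L$ balls, pushes that covering to $B_X(x,\tfrac1{2\lambda_x})$, and then, for every nearby $z\in B(x,\tfrac1{4\lambda_x})\cap K$ and every small $r$, compares $(r^{-1}X,z)$ with $(8\lambda_x X,z)$ by the triangle inequality \emph{through the unique tangent $T_zX$}, both being within $\delta_x/4$ of it by uniform convergence; Lemma \ref{lem_ass_1} then converts the single covering at scale $\tfrac1{4\lambda_x}$ into an $L$-ball covering of $B(z,r)$ at \emph{every} scale $r<\tfrac1{4\lambda_x}$, with $L$ depending only on $C_x$. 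Iterating across the scales $(16\delta_x)^jR$ yields $\dim_A\bigl(B(x,\tfrac1{4\lambda_x})\cap K\bigr)\le\beta$, and compactness of $K$ enters only at the very end to patch finitely many such neighborhoods. So uniqueness is exploited not through compactness of the tangent family but through this triangle inequality at the nearby points $z$; without it (or without uniform constants as in assumption (i)) the dyadic chaining you correctly identify as the crux cannot even be started at points $z\ne x$.
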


Theorem~\ref{teorema1b} is a consequence of Theorem~\ref{thm:assouad2}.
These results will be proved in Section \ref{Sec:Assouad_uniform_tangents}. 
Theorem  
\ref{teorema1b} gives an alternative proof of a fact essentially proven in
\cite{nagelstwe}.
Namely, the Assouad dimension  of an equiregular subRiemannian manifold equals the
Assouad dimension of its tangents.

One direction of research where the above results can be used
is the study of generalizations of Reifenberg 
vanishing-flat metric spaces where the model space $\R^n$ is replaced by any fixed doubling metric space.
Namely, we say that a metric space $X$ is {\em vanishing-flat modeled on a metric space $Y$}
if $Y$ is the tangent space at any point $x$ of $X$ and the convergence toward the tangents is uniform.
The case when $Y$ is a Euclidean space is called 
Reifenberg vanishing-flat metric spaces, and it has been manly considered in \cite{Cheeger-Colding,David-Toro}.
It is a natural problem to study what properties of $X$ can then be deduced from the ones of $Y$.
\\

We shall also provide results when the tangents are not assumed to be single spaces. However, without such an assumption, it is necessary to require uniformity in values of the constants appearing in the definition of the dimension of the tangents.
Such uniformity of constants is true for instance for many self-similar spaces and these spaces usually have more than one tangent at every point.
Let us briefly recall that the linearly controlled dimension is the bounded-scale version of the Nagata dimension.
\begin{theorem}\label{teorema2}
Let $X$ be a metric space of finite linearly controlled dimension.
Assume that $X$ has uniformly close tangents (as defined above).
Then the linearly controlled dimension of $X$ equals the infimum of all integers $n$ for which, for all $x\in X$,  the linearly controlled dimension of any $Y \in    {\rm Tan}(X,x)$ is  at most $  n$ with constants independent from $Y$   and $x$.
\end{theorem}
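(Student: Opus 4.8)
Write $\dim_\ell$ for the linearly controlled dimension, set $m:=\dim_\ell X<\infty$, and let $N$ be the infimum on the right-hand side. Since the admissible values of $n$ form a set of nonnegative integers that we will see is nonempty, this infimum is attained. The plan is to prove the inequalities $N\le m$ and $m\le N$ separately. The first uses only that tangents are pointed Gromov--Hausdorff limits of dilations; the hypothesis of uniformly close tangents, together with the a priori finiteness of $\dim_\ell X$, is what drives the second.

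To prove $N\le m$ I would show that every tangent at every point has linearly controlled dimension at most $m$ with one common constant, namely a constant $c$ realizing $\dim_\ell X\le m$. Fix $x$ and $Y\in{\rm Tan}(X,x)$, say $(\lambda_j X,x)\to Y$ with $\lambda_j\to\infty$. Dilating a scale-$s$ cover of $X$ (mesh-to-scale ratio $c$, $s$-multiplicity $\le m+1$) by $\lambda_j$ produces a scale-$(\lambda_j s)$ cover of $\lambda_j X$ with the same ratio and multiplicity; hence $\lambda_j X$ admits such covers at all scales up to $\lambda_j s_0$, a threshold tending to infinity. Transporting these covers through the $\epsilon_j$-approximations and extracting a limit on exhausting balls (a routine compactness argument for covers) yields, for every fixed scale $t$, a cover of $Y$ with mesh-to-scale ratio $c$ and $t$-multiplicity $\le m+1$. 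Thus each tangent in fact has \emph{Nagata} dimension $\le m$ with constant $c$, so a fortiori $\dim_\ell Y\le m$ with constants independent of $Y$ and $x$. This exhibits $m$ as an admissible value of $n$, so $N\le m$ (and, incidentally, shows the admissible set is nonempty). Note that uniformity of the convergence is not used here.

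For $m\le N$ I would fix a small scale $s$ and rescale by $\lambda=1/s$, reducing the goal to a cover of $\lambda X$ at the fixed scale $1$ with mesh $\le c'$ and $1$-multiplicity $\le N+1$. Given $\epsilon>0$, uniform convergence to tangents provides $\lambda_\epsilon$ so that for $\lambda>\lambda_\epsilon$ and every basepoint $k$ there is a tangent $Y_k$ with ${\rm Dist}_{GH}((\lambda X,k),Y_k)<\epsilon$; since $\dim_\ell Y_k\le N$ with the uniform constant, $Y_k$ carries a scale-$1$ cover of mesh $\le c$ and $1$-multiplicity $\le N+1$. Pulling this cover back through the approximation gives a genuine cover of a large ball $B(k,R)\subset\lambda X$ with mesh $\le c+O(\epsilon)$ and $1$-multiplicity $\le N+1$ on that ball. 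The task is then to splice these patchwise covers into one global cover. Arranging the patch centers as a maximal $R$-separated net, the finiteness of $\dim_\ell X$ lets one color this net with $m+1$ colors so that same-colored centers are as far apart as desired, providing the combinatorial scaffolding along which the splicing is organized. Undoing the rescaling and letting $s$ run through $(0,s_0]$ would then give $\dim_\ell X\le N$.

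The assembly is the main obstacle. Its subtlety is that finite linearly controlled dimension does \emph{not} entail the doubling property, so the patches need not have bounded overlap, and a crude union of their covers would multiply the number of colors. The point that saves the argument is that the scale-$1$ multiplicity is a \emph{local} quantity, read off from the members meeting any single ball of radius $O(1)$; such a ball lies well inside some patch $B(k,R)$ (with $R\gg1$), throughout which $\lambda X$ is modeled on the one tangent $Y_k$ whose cover already caps the multiplicity at $N+1$. Moreover, any two patches overlapping near such a ball are each $\epsilon$-close to $\lambda X$, hence $O(\epsilon)$-close to one another there, so their covers are near-copies of a single cover and can be forced to agree up to $O(\epsilon)$; the possibly large number of overlapping patches is therefore harmless. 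The remaining difficulty, and the technical heart of the proof, is to carry out this reconciliation \emph{coherently} across all of $X$ and uniformly in the basepoint, keeping the mesh increase controlled so that it is reabsorbed as $\epsilon\to0$; it is precisely here that the uniformity of the convergence to tangents and the uniformity of the tangents' dimension constants are used together.
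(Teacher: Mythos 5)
Your overall architecture coincides with the paper's: the inequality $N\le m$ is the general lower semicontinuity of Nagata dimension under pointed Gromov--Hausdorff limits (Corollary \ref{cor:Nagatalower}), and $m\le N$ is the content of Theorem \ref{thm:uniformtangents} under assumption (i). For $N\le m$ your sketch is essentially correct, though the step you call ``a routine compactness argument for covers'' is where the paper actually has to work: transporting covers through the $\epsilon_j$-approximations only covers a bounded ball $B(\star,k)$ of the tangent, and passing from covers of exhausting balls to a cover of all of $Y$ is done by Lemma \ref{union} (an even/odd annulus decomposition plus the separated-union and two-set-union lemmas), at the cost of degrading the constant from $c$ to roughly $c^2/5$. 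Your claim that the constant $c$ survives intact is therefore not justified, but this is harmless since the theorem only requires some uniform constant.

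The genuine gap is in $m\le N$, and you have located it yourself: the splicing of the patchwise covers is exactly the ``technical heart'' that you leave unproven, and the mechanism you sketch for it --- reconciling the covers of overlapping patches by forcing them to ``agree up to $O(\epsilon)$'' because the patches are mutually $O(\epsilon)$-close --- is not what works and is never made precise (it is unclear how to make infinitely many near-copies of covers literally coherent, uniformly over $X$). The paper's Lemma \ref{lma:nagatafinalconclusion} does something different and entirely concrete: using $\dim_N(K,\overline c,s)\le\overline n$ for small $s$, take a \emph{coarse} $\tfrac r2$-bounded cover $\mathcal U=\mathcal U_0\sqcup\cdots\sqcup\mathcal U_{\overline n}$ of $X$ with each $\mathcal U_j$ being $\tfrac{\overline c r}{2}$-separated, where $r$ is the patch radius. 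Each coarse piece $U$ has diameter $\le r/2$, hence sits inside a single patch and inherits the fine $(n+1)$-colored cover at scale $\sim r(as+b\epsilon)$ by restriction --- no reconciliation between patches is ever needed. Within one coarse color class the pieces are separated at a scale much larger than the fine scale, so Lemma \ref{lma:dimforseparated} gives $\dim_N(U^j,\cdot,\cdot)\le n$ for each $j$, and then Corollary \ref{cor:Npart} (the iterated finite-union lemma, which merges nearby members of the two covers into single sets and therefore keeps the number of colors at $n+1$ instead of multiplying it --- precisely the danger you worry about) glues the $\overline n+1$ classes. The real bookkeeping, which your sketch does not address, is that each application of the union lemma shrinks the admissible window of scales by a factor $(2+3/c_1)$, so one must choose $\epsilon$ small enough in terms of $\overline n$, $c$, $\overline c$, $a$, $b$, $s_0$ that the window $\bigl[(2+3/c_1)^{\overline n-1}r(as_\epsilon+b\epsilon),\,r(as_0+b\epsilon)\bigr]$ remains nonempty; this is the quantitative condition that makes the mesh increase ``reabsorbed as $\epsilon\to0$''. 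As a minor further point, your coloring of a maximal $R$-separated net of patch centers does not by itself make same-colored centers ``as far apart as desired'': two net points lying in the same element of the coarse cover receive the same color while being only $R$ apart, which is why the paper colors the cover elements rather than a net of centers.
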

See Theorem~\ref{thm:uniformtangents}, with assumption $(i)$, for a more explanatory statement of the upper bound.
The lower bound follows from Corollary \ref{cor:Nagatalower}. 
\\
 
We also have the analogue 
of Theorem~\ref{teorema2}
for the Assouad dimension.
\begin{theorem}\label{teorema3}
Let $X$ be a metric space.
Assume that $X$ has uniformly close tangents (as defined above).
Then the Assouad dimension of $X$ equals the infimum of all  $\alpha\geq 0 $ for which, for all $x\in X$,  the Assouad dimension of any $Y \in    {\rm Tan}(X,x)$ is  at most   $ \alpha$ with constants independent from $Y$   and $x$.
\end{theorem}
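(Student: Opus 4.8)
The plan is to prove the two inequalities separately. Write $\alpha^{*}$ for the infimum appearing on the right-hand side, and for a subset $S$ of a metric space and $r>0$ let $N_r(S)$ denote the least number of balls of radius $r$ needed to cover $S$. I will use the standard reformulation that $\dim_A X\le\alpha$ is equivalent to the existence of a constant $C$ with $N_r(B(x,R))\le C(R/r)^{\alpha}$ for all $x\in X$ and all $0<r<R$, together with the fact that this covering bound is invariant under the dilations $X\mapsto\lambda X$: since $N_r^{\lambda X}(B_{\lambda X}(x,R))=N_{r/\lambda}^{X}(B_X(x,R/\lambda))$, both the exponent and the constant $C$ are unchanged because only the ratio $R/r$ enters.

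First I would prove the lower bound $\alpha^{*}\le\dim_A X$, which is the direction stating that tangents are no larger than the space. Fix $\alpha>\dim_A X$ and a constant $C$ witnessing $\dim_A X<\alpha$. By dilation invariance every space $\lambda X$ satisfies the same covering bound with the same constant $C$, and covering numbers pass to pointed Gromov--Hausdorff limits up to an arbitrarily small loss in the radii. Hence any $Y\in{\rm Tan}(X,x)$, being a limit of the spaces $(\lambda X,x)$ as $\lambda\to\infty$, inherits $N_r(B(y,R))\le C'(R/r)^{\alpha}$ with $C'$ depending only on $C$ and $\alpha$, and thus \emph{independently of $x$ and of $Y$}. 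Therefore $\alpha$ belongs to the set defining $\alpha^{*}$, so $\alpha^{*}\le\alpha$; letting $\alpha\downarrow\dim_A X$ gives $\alpha^{*}\le\dim_A X$.

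The substantial direction is the upper bound $\dim_A X\le\alpha^{*}$, and here the uniform convergence to tangents is essential. Fix $\alpha>\alpha^{*}$, so there is $C_0$ with $N_r(B(y,R))\le C_0(R/r)^{\alpha}$ for every tangent $Y$, at every point, and all $0<r<R$. The idea is to convert this into a single-scale estimate on $X$ and then iterate. Fix a large ratio $\tau>1$ and take $\eps$ small (say $\eps\ll1/\tau$). By uniform convergence there is $\lambda_\eps$ so that for every $x\in X$ and every $\lambda>\lambda_\eps$ the unit ball $B_{\lambda X}(x,1)$ is $\eps$-close in ${\rm Dist}_{GH}$ to the unit ball of some tangent $Y$ at $x$. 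Covering $B_Y(y,1)$ by at most $C_0\tau^{\alpha}$ balls of radius $1/\tau$ and pushing this cover back through the $\eps$-approximation yields, for every $x$ and every scale $\rho$ with $\lambda=1/\rho>\lambda_\eps$, a bound $N_{c\rho/\tau}(B(x,\rho))\le C_1\tau^{\alpha}$, where the mild enlargement of radii forced by the $\eps$-approximation is absorbed into the fixed constants $c,C_1$ because $\eps\ll1/\tau$. Iterating this single-scale inequality along a chain of scales $\rho,\rho/\tau,\rho/\tau^{2},\dots$ gives $\dim_A X\le\alpha+\log_\tau C_1$, and since $C_1$ stays bounded as $\tau\to\infty$ the error $\log_\tau C_1\to0$; letting first $\tau\to\infty$ and then $\alpha\downarrow\alpha^{*}$ completes the argument.

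The main obstacle is precisely the scale bookkeeping in this last step, since the Gromov--Hausdorff approximation only transports a cover down to scales comparable to $\eps$ times the radius of the ball, so that reaching a prescribed fine target radius forces one to re-approximate by fresh tangents at successively smaller scales and then multiply the resulting single-scale estimates. Propagating the single-scale bound across the whole range of scales while keeping the accumulated multiplicative constant from corrupting the exponent --- so that the limiting dimension is exactly $\alpha$ rather than $\alpha$ plus a positive error --- is what dictates the coupled choice of $\tau$ large and $\eps$ small, and it is here that both uniformities are indispensable: the threshold $\lambda_\eps$ must be the same for all $x\in X$, and the constant $C_0$ must be the same for all tangents $Y$, for otherwise the estimates could not be chained uniformly and the iteration would break down.
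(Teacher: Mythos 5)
Your proposal is correct and follows essentially the same route as the paper: the lower bound via scale-invariance of the Assouad covering constants together with their stability under pointed Gromov--Hausdorff limits (Lemma~\ref{lem_ass_limit} and Corollary~\ref{cor:Assouadlower}), and the upper bound by transporting a single-scale cover from a tangent back to a ball of $X$ and iterating down a geometric chain of scales with a fresh tangent approximation at each step (Lemma~\ref{lem_ass_1} and Proposition~\ref{prop:assouad1}), your choice of $\tau$ large with error $\log_\tau C_1\to 0$ being the same device as the paper's choice of $\delta$ with $(1/\delta)^\beta=C(4/\delta)^\alpha$. Note only that, as in the paper, the iteration starts below the scale $1/\lambda_\eps$, so what one really obtains is the bound for balls of radius at most some fixed $R'$ (handled there by Lemma~\ref{lem_ass_2}).
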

A more explanatory statement of the upper bound is given in Theorem~\ref{thm:assouad1}, which will be an immediate consequence of
Proposition \ref{prop:assouad1}.
The lower bound is given by Corollary \ref{cor:Assouadlower}.
\\

The paper is organized as follows. Section \ref{sec:preli} is devoted to preliminaries.
We recall the definitions and basic properties of Assouad dimension, Nagata dimension, and locally controlled dimension.
In Section \ref{sec:GH}, we give the definition of Gromov-Hausdorff distance for pointed metric spaces
and we define the set of tangents.
We provide some remarks about the lower semicontinuity of Assouad dimension and Nagata dimension.
In particular, Corollary~\ref{cor:Assouadlower} (resp. Corollary \ref{cor:Nagatalower})
gives the lower bound for Theorem~\ref{teorema1b} and Theorem~\ref{teorema3} (resp. Theorem~\ref{teorema1} and Theorem~\ref{teorema2}).
In Example \ref{ex:nonuniformtangent}, we show that in general, even for compact subsets of $\R$, the dimension of all the tangents
could be strictly smaller than the dimension of the set. However, in Proposition \ref{prop:weaktangentnagata} we prove that, if
a doubling space has Nagata dimension equal to one, it has some weak tangent with Nagata dimension one.

In Section \ref{sec:uniform} we study metric spaces with uniformly close tangents.
In Example \ref{ex:nagata} and Example~\ref{ex:assouad} we show that having uniformly close tangents does not imply an upper
bound for the dimension of the space in terms of
 the dimensions
 the tangents.
In Theorem~\ref{thm:uniformtangents} we provide such a bound with the additional assumption that either the tangents
have linearly controlled dimension less than $n$ with respect to a uniform constant $c$, or the tangents are unique.
Such theorem gives the missing upper bound for Theorem~\ref{teorema2} and Theorem~\ref{teorema1}.
In Section \ref{Sec:Assouad_uniform_tangents} we consider Assouad dimension. We prove Theorem~\ref{thm:assouad1}
(resp. Theorem~\ref{thm:assouad2}) giving the upper bound needed for concluding the proof of
Theorem~\ref{teorema3} (resp. Theorem~\ref{teorema1b}).

In Section \ref{sec:equiSR} we apply Theorem~\ref{teorema1} to prove Corollary \ref{corollarioSRM},
after recalling some results on subRiemannian geometry and Carnot groups.
In Section \ref{Sec:nagaasso} we prove Theorem~\ref{thm:nagaasso}
and Theorem~\ref{thm:nagaasso2},
which is a bounded-scale version of Theorem~\ref{thm:nagaasso}. 

\section{Preliminaries}\label{sec:preli}
%

\subsection{Assouad dimension and Nagata dimension}
In this paper we will consider the notion of Assouad dimension. 
It is also known with other names such as: metric covering dimension, uniform metric dimension, or doubling dimension.
We recall here the definition from  \cite[page 81]{Heinonenbook}. 
 The {\em Assouad dimension} of a metric space $X$ is denoted by  $\dim_{A}X$ and is defined as the infimum of all numbers $\beta > 0$ with the property that there exists some $C>1$ such that, for every $\eps>0$,
 every set of diameter $D$ can be covered by using no more than
 $C\eps^{-\beta}$ sets of diameter at most $\eps D$.  
 In this case, we say that the Assouad dimension is less than or equal to $\beta$ with constant $C$.
 
We will need a quantified and local version of the definition that makes explicit the constants involved.
\begin{definition}[Assouad dimension up to a scale]
Let $\bar R>0$ and $C>1$.
We say that a metric space $X$ has {\em Assouad dimension at most} $\beta$  {\em up to scale} $\bar R$  {\em with constant} $C$ if,
for all $0<r<R< \bar R$, any ball of radius $R$ in $X$ can be covered with 
 $C\left( {R}/{r}\right)^\beta$
 or less balls of radius $r$ in $X$. 
 In this case we write 
 $\dim_A(X,C,\bar R)\leq 
\beta$.
 \end{definition}

 
Metric spaces with finite Assouad dimension are precisely the doubling metric spaces. We recall that a metric space is {\em  doubling with constant }$L$,
 for some  $L>0$, if, for every $s > 0$, 
every subset of the metric space with diameter at most $ 2s$ can be covered by $L$ or fewer 
sets of diameter at most $ s$.   
\\

Other  notion of metric dimension  that we will consider is  the  Nagata dimension. 
 Before giving the definition, let us   recall some basic terminology, following \cite{Lang-Schlichenmaier}. 
Two subsets $A,B$ of a metric space are \emph{$s$-separated}, for some constant $s\geq0$, if
$\dist(A, B) := \inf\{d(a,b)\;;\;a\in A,b\in B\} \geq s$.
A family  of subsets is called \emph{$s$-separated} if 
each distinct pair of element in  it is $s$-separated.
Let  $\mathcal{B} $ be a cover of a metric space $X$. Then, for $s > 0$, the 
\emph{$s$-multiplicity} of $\mathcal{B}$ is the infimum of all $n$ such that 
every subset of $X$ with diameter at most $ s$ meets at most $n$ members of the 
family  $\mathcal{B} $. 
Furthermore,
$\mathcal{B}$ is called \emph{$D$-bounded}, for some constant $D \ge 0$, 
if $\diam B := \sup\{d(x,x')\;;\;x,x'\in B\} \le D$,
for all $B \in \mathcal{B}$.

\begin{definition}[Nagata dimension]\label{def nag dim}
 Let $X$ be a metric space. The {\em Nagata dimension}, or Assouad-Nagata dimension,
 of $X$ is denoted by $\dim_N X$ and is defined as the infimum of all integers $n$ with the following property: there
exists a constant $c > 0$ such that, for all $s > 0$, $X$ admits a $cs$-bounded cover with $s$-multiplicity
at most $n + 1$.
\end{definition} 

 As shown in \cite[Proposition 2.5]{Lang-Schlichenmaier},
 the Nagata dimension  can be defined
equivalently as  the infimum of all integers $n$ with the following property: 
\begin{equation}\label{Nag Dim}
\begin{array}{l}
\text{ there exists a constant $c> 0$ such that for all $s > 0$, the metric space admits}\\
\text{ 
an $s$-bounded cover of the form $\mathcal{B} = \bigcup_{k=0}^n \mathcal{B}_k$ where each   $\mathcal{B}_k$ is $cs$-separated.}
\end{array}
\end{equation}
The constant $c$ in \eqref{Nag Dim} can be different from the constant $c$ in Definition \ref{def nag dim}.
\\

We should notice that the notion of Nagata dimension is global. Moreover, it is   both for small and large scales.
We will need the bounded-scale version. For a better formulation of our statements, we give a definition that points out both the scale and the constant. 
\begin{definition}[Nagata dimension at a scale]\label{def:Nag_scale}
 We say that a metric space $X$ has Nagata dimension bounded from above by $n \in \N:=\{0, 1, 2, \ldots\}$ with constant $c>0$ at scale $s>0$
 and write
 \[
  \dim_N(X,c,s) \le n,
 \]
 if
  the metric  space admits 
an $s$-bounded cover of the form $\mathcal{B} = \bigcup_{k=0}^n \mathcal{B}_k$ 
where 
each    $\mathcal{B}_k$ is $cs$-separated.
 \end{definition}
 
\begin{definition}[Linearly controlled dimension]\label{lin con dim}
We say that a metric space $X$ has {\em linearly controlled dimension} bounded from above by $n$, written as
 \[
  \dim_{LC} X \le n,
 \]
 provided that there exists $c >0$ and $s_0 > 0$ such that $\dim_N(X,c,s) \le n$, for all $0 < s < s_0$.
\end{definition}

The notation $\ell$-dim is also used for the linearly controlled dimension, see \cite[Section~9.1.4]{Buyalo_Schroeder_book}

Notice that the space $X$ has Nagata dimension bounded from above by $n$ 
exactly when we can take $s_0 = \infty$, in other words if there exists $c >0$ such that $\dim_N(X,c,s) \le n$ for all $s > 0$.

Since any  cover for a space $X$ restricts to a cover to any of its subsets $Y$, we have the easy inequality
 $\dim_NY \le \dim_NX$, for all $Y \subset X$.
 Moreover, if $Y \subset X$, then 
  \begin{equation}\label{lma:subset}
  \dim_N(X,c,s) \le n \implies    \dim_N(Y,c,s) \le n.
\end{equation}

\subsection{Basic facts about the dimensions}

Here is a first lemma showing that, in the local version of the Assouad dimension,  we have no problem if the radius a priori depends on the exponent.
\begin{lemma}\label{lem_ass_2}

Let $X$ be a metric space and $\alpha\geq0$.
Assume that, for all $\beta>\alpha$, there exist two constants $C_\beta>1$ and $ R_\beta>0$ such that,
 $\dim_A(X,C_\beta,  R_\beta)\leq 
\beta$. 
Then there exists $R'>0$ such that, for all $\beta>\alpha$, there exists a constant $C'_\beta>1$  such that, 
 $\dim_A(X,C'_\beta,  R')\leq 
\beta$.
%
 \end{lemma}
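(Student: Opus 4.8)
The plan is to fix once and for all a single exponent $\beta_0 > \alpha$, set $R' := R_{\beta_0}$, and show that this one radius works for every $\beta > \alpha$ at the cost of enlarging the constant. The reason to single out $\beta_0$ is that the hypothesis with this fixed exponent provides a covering estimate valid on \emph{all} scales below $R'$, not merely below $R_\beta$, and it is precisely this estimate that will let us handle the intermediate scales on which the $\beta$-hypothesis is, a priori, unavailable.

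First I would dispose of the range $\beta \ge \beta_0$ by monotonicity of the covering condition in the exponent. From $\dim_A(X,C_{\beta_0},R_{\beta_0}) \le \beta_0$ and $R/r > 1$ we get $C_{\beta_0}(R/r)^{\beta_0} \le C_{\beta_0}(R/r)^{\beta}$ for all $0 < r < R$, so the very same covers witness $\dim_A(X,C_{\beta_0},R') \le \beta$; here one takes $C'_\beta = C_{\beta_0}$.

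The substantive case is $\alpha < \beta < \beta_0$, where I must cover an arbitrary ball of radius $R$, with $0 < r < R < R'$, by at most $C'_\beta (R/r)^\beta$ balls of radius $r$. When $R < R_\beta$ this is exactly the $\beta$-hypothesis, with constant $C_\beta$. The difficulty, and the only real obstacle, is the intermediate range $R_\beta \le R < R'$, where the $\beta$-hypothesis does not apply at the top scale $R$. I would resolve it by a two-scale covering at the fixed intermediate radius $R_\beta/2$. If $r \ge R_\beta/2$, I cover $B(x,R)$ directly via the $\beta_0$-hypothesis (legitimate since $R < R' = R_{\beta_0}$) and then trade $\beta_0$ for $\beta$ using that $R/r \le 2R'/R_\beta$ is bounded by a constant depending only on $\beta$; the surplus factor $(R/r)^{\beta_0 - \beta}$ is absorbed into the constant. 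If instead $r < R_\beta/2$, I first cover $B(x,R)$ by balls of radius $R_\beta/2$, whose number is at most $C_{\beta_0}(2R/R_\beta)^{\beta_0} \le C_{\beta_0}(2R'/R_\beta)^{\beta_0} =: N_\beta$ (again a constant, since $R < R'$), and then cover each of these, having radius $R_\beta/2 < R_\beta$, by balls of radius $r$ through the $\beta$-hypothesis, at a cost of at most $C_\beta((R_\beta/2)/r)^\beta \le C_\beta(R/r)^\beta$ per ball.

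Collecting the three ranges, the total count is in each case a fixed multiple of $(R/r)^\beta$, so defining $C'_\beta$ as the maximum of the finitely many constants $C_\beta$, $C_{\beta_0}(2R'/R_\beta)^{\beta_0-\beta}$, and $N_\beta C_\beta$ (all exceeding $1$) completes the proof. The mechanism that makes the intermediate scales harmless is that, once $\beta$ is fixed, the ratio $R'/R_\beta$ between the target radius and the radius furnished by the hypothesis is a fixed number, so each exponent mismatch contributes only a bounded multiplicative error.
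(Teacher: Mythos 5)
Your proof is correct and follows essentially the same route as the paper's: fix one exponent ($\beta_0$, the paper's $\eta$), set $R':=R_{\beta_0}$, and for $R\ge R_\beta$ compose a cover at the intermediate scale $\approx R_\beta$ coming from the $\beta_0$-hypothesis with the cover coming from the $\beta$-hypothesis, absorbing the bounded ratio $(R'/R_\beta)^{\beta_0}$ into the constant $C'_\beta$. Your additional case splits ($\beta\ge\beta_0$ by monotonicity, and $r\ge R_\beta/2$ versus $r<R_\beta/2$) merely tidy up edge cases that the paper's one-line composition glosses over.
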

 
\begin{proof}
 Fix $\eta>\alpha$.
 We can take $R':=  R_{\eta}$.
Indeed, pick any  $\beta>\alpha$.
 Fix $0<r<R<  R_{\eta}$ and $x\in X$.
 If $R<  R_\beta$, we are done. So we assume $R\geq   R_\beta$.
 Cover $B(x ,   R_{\eta})$ with 
$C_{\eta}\left( {  R_{\eta}}/{  R_{\beta}}\right)^{\eta}$ or less balls of radius $  R_{\beta}$.
Cover each of these balls with at most
$C_{\beta}\left( {  R_{\beta}}/{r}\right)^\beta$ balls of radius $r$.
Hence, the ball  $B(x ,   R  )$, which is inside  $B(x ,   R_{\eta}),$ needs no more than
$C_{\eta}\left( {  R_{\eta}}/{  R_{\beta}}\right)^{\eta} C_{\beta}\left( {  R_{\beta}}/{r}\right)^\beta$ balls of radius $r$ to cover it.
The proof is concluded by putting
$C'_{\beta}:=
C_{\eta}
C_{\beta}
\left( {  R_{\eta}}/{  R_{\beta}}\right)^{\eta} $,
since then
$C'_{\beta} 
\left( {  R_{\beta}}/{r}\right)^\beta
<
C'_{\beta} 
\left( {  R }/{r}\right)^\beta.$
\end{proof}

Regarding the Nagata dimension,
we shall study what happens when we consider spaces that are the union of spaces of which we know their Nagata dimension.
We start with an easy statement, whose proof is straightforward.
\begin{lemma}\label{lma:dimforseparated}
 Let $X = \bigcup_{i \in I} X_i$ with $X_i$ that are $r$-separated and 
 $
  \dim_N(X_i,c,s) \le n$, for all $i \in I$.
 Then
 \[
  \dim_N\left(X ,\min\left\{c,\frac{r}{s}\right\} , s\right) \le n.
 \]
\end{lemma}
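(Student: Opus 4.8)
The plan is to build a single good cover of $X$ out of the given covers of the pieces $X_i$, working throughout with the separated-families formulation of the Nagata dimension at a scale (Definition~\ref{def:Nag_scale}). By hypothesis, for each $i \in I$ the space $X_i$ admits an $s$-bounded cover $\mathcal{B}^i = \bigcup_{k=0}^n \mathcal{B}^i_k$ in which every block $\mathcal{B}^i_k$ is $cs$-separated. The natural candidate for a cover of $X$ is to glue these together block by block: set $\mathcal{B}_k := \bigcup_{i\in I} \mathcal{B}^i_k$ for each $k \in \{0,\dots,n\}$ and $\mathcal{B} := \bigcup_{k=0}^n \mathcal{B}_k$. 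Two of the required properties are then immediate. Since each $X_i$ is covered by $\bigcup_k \mathcal{B}^i_k$ and $X = \bigcup_i X_i$, the family $\mathcal{B}$ covers $X$; and $\mathcal{B}$ is $s$-bounded because every member comes from some $\mathcal{B}^i$, which is $s$-bounded.

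The only point requiring care is the separation of each block $\mathcal{B}_k$, and here I would distinguish two cases for a pair of distinct members $A, A'$ of $\mathcal{B}_k$. If $A$ and $A'$ come from the same index, i.e.\ both lie in $\mathcal{B}^i_k$ for a single $i$, then $\dist(A, A') \ge cs$ because $\mathcal{B}^i_k$ is $cs$-separated. If instead $A \in \mathcal{B}^i_k$ and $A' \in \mathcal{B}^j_k$ with $i \neq j$, then $A \subseteq X_i$ and $A' \subseteq X_j$, so the $r$-separation of the family $\{X_i\}_{i \in I}$ gives $\dist(A, A') \ge \dist(X_i, X_j) \ge r$. Hence every block $\mathcal{B}_k$ is $\min\{cs, r\}$-separated.

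It remains only to record the constant in the required form: writing $\min\{cs, r\} = s \cdot \min\{c, r/s\}$ shows that each $\mathcal{B}_k$ is $c's$-separated with $c' = \min\{c, r/s\}$, so $\mathcal{B}$ witnesses $\dim_N\left(X, \min\{c, r/s\}, s\right) \le n$, as claimed. I do not expect a genuine obstacle here, since the construction is a direct blockwise gluing; the one step deserving care is the cross-index case of the separation, where the two distance estimates---the bound $cs$ coming from within a single piece and the bound $r$ coming from the separation of the pieces---must be combined correctly into the single factor $\min\{c, r/s\}$.
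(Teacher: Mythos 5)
Your proof is correct and is exactly the argument the paper has in mind; the authors simply omit it as "straightforward." The blockwise gluing, the two-case separation check (within a piece versus across pieces), and the rewriting $\min\{cs,r\}=s\min\{c,r/s\}$ are all as intended.
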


Regarding finite unions, we show the following Lemma~\ref{lma:twounion}, which  is essentially the quantified version of \cite[Theorem 2.7]{Lang-Schlichenmaier}.
\begin{lemma}\label{lma:twounion}
 Let $X,Y$ be subsets of a metric space and $c_1,c_2 \in (0,1)$ and $0 \le s_0 < s_1 < \infty$ such that
 \[
  \dim_N(X,c_1,s), \dim_N(Y,c_2,s) \le n ,\qquad \text{for all }s_0 \le s \le s_1.
 \]
 Then
\[
  \dim_N\left(X \cup Y,\frac{c_1c_2}{5},s\right) \le n, \qquad \text{for all }\left(2 + \frac3{c_1}\right)s_0 \le s \le \left(1 + \frac{2}{3}c_1\right)s_1.
 \]
\end{lemma}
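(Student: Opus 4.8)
The plan is to manufacture a single separated cover of $X\cup Y$ from covers of $X$ and of $Y$, but taken at two \emph{different} scales inside $[s_0,s_1]$. This two-scale device is forced on us: since the separation constants $c_1,c_2$ lie in $(0,1)$, the separated families of Definition~\ref{def:Nag_scale} have separation \emph{smaller} than their diameter, so a cover of $Y$ at the same scale as the cover of $X$ cannot be inserted into the gaps of the $X$-coloring. Concretely, I would fix the output scale $s$, set $t:=s/(1+\tfrac23 c_1)$ so that $t\le s_1$ exactly when $s\le(1+\tfrac23 c_1)s_1$, and apply the hypothesis to produce an $X$-cover $\mathcal{B}=\bigcup_{k=0}^{n}\mathcal{B}_k$ at scale $t$ (each $\mathcal{B}_k$ being $c_1t$-separated and $t$-bounded) together with a $Y$-cover $\mathcal{C}=\bigcup_{k=0}^{n}\mathcal{C}_k$ at a smaller scale $\tau\le t$ (each $\mathcal{C}_k$ being $c_2\tau$-separated and $\tau$-bounded). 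A radius $\rho>0$ with $\rho+\tau\le\tfrac13 c_1 t$ will govern the absorption.

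For each colour $k$ and each $P\in\mathcal{B}_k$ I would form the enlarged set
\[
  \hat P:=P\cup\bigcup\{\,Q\in\mathcal{C}_k:\dist(Q,P)\le\rho\,\},
\]
that is, I \emph{absorb wholesale} every same-colour $Y$-set that comes within $\rho$ of $P$; the remaining members of $\mathcal{C}_k$, namely those $Q$ with $\dist(Q,P)>\rho$ for all $P\in\mathcal{B}_k$, are kept untouched. The colour-$k$ family of the new cover is then $\{\hat P:P\in\mathcal{B}_k\}$ together with the kept sets. Since $2\rho+\tau\le2(\rho+\tau)\le\tfrac23 c_1t$, which is strictly below the separation $c_1t$ of $\mathcal{B}_k$, a single $Q$ can meet the $\rho$-neighbourhood of at most one $P\in\mathcal{B}_k$, so the absorption is well defined; and because every absorbed or kept $Q$ retains its colour while each $X$-point lies in some $P$, the new family covers $X\cup Y$. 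Each absorbed $Q$ satisfies $Q\subseteq N_{\rho+\tau}(P)$, whence $\hat P\subseteq N_{\rho+\tau}(P)$ and $\diam\hat P\le t+2(\rho+\tau)\le t(1+\tfrac23 c_1)=s$, while kept sets have diameter $\le\tau\le s$; so the cover is $s$-bounded.

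It remains to check that each new colour class is $c's$-separated, and this is where the argument really happens. Two enlarged sets $\hat P_1,\hat P_2$ lie in $N_{\rho+\tau}(P_1)$ and $N_{\rho+\tau}(P_2)$, so they are $\big(c_1t-2(\rho+\tau)\big)\ge\tfrac13 c_1t$-separated; two kept sets are $c_2\tau$-separated as a subfamily of $\mathcal{C}_k$. The delicate case — and the main obstacle — is the separation between an enlarged set $\hat P$ and a kept set $Q$ of the same colour; this is exactly the cross-term that breaks the naive ``enlarge $X$, trim $Y$'' scheme, where coverage forces the trimming threshold to equal the enlargement radius and leaves no buffer. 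Absorbing whole small sets repairs it: a point of $\hat P$ lies either in the core $P$, at distance $>\rho$ from the kept $Q$ by definition of ``kept'', or in an absorbed $Q'\in\mathcal{C}_k$, at distance $\ge c_2\tau$ from $Q$ since these are two distinct colour-$k$ members of $\mathcal{C}$. Hence $\dist(\hat P,Q)\ge\min\{\rho,\,c_2\tau\}$, a genuinely positive, scale-linear bound into which the separation of the $Y$-cover enters; this is the source of the product $c_1c_2$ in the final constant.

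Finally I would balance the two free parameters: with $\rho+\tau=\tfrac13 c_1t$ and $\rho,\tau$ chosen of order $c_1t$ (so that $\rho$ and $c_2\tau$ are comparable), the three estimates above give a common lower bound that is a fixed multiple of $c_1c_2\,s$, which the precise bookkeeping renders as $\tfrac{c_1c_2}{5}\,s$. The admissibility constraints $t\le s_1$ and $\tau\ge s_0$ then translate, through $s=(1+\tfrac23 c_1)t$ and $\tau$ of order $c_1t$, into the window $\big(2+\tfrac{3}{c_1}\big)s_0\le s\le\big(1+\tfrac23 c_1\big)s_1$, giving $\dim_N\big(X\cup Y,\tfrac{c_1c_2}{5},s\big)\le n$ there. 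This is the quantitative refinement of \cite[Theorem~2.7]{Lang-Schlichenmaier}, and I expect the only real work to be the cross-term estimate above together with the bookkeeping of constants in this last balancing step.
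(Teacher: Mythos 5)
Your proposal is correct and follows essentially the same route as the paper: there, too, the $Y$-cover is taken at the smaller scale $\tfrac13 c_1 s$, every same-coloured $V_k^j$ within $\tfrac13 c_1 s$ of a $U_i^j$ is absorbed wholesale into it, the remaining $V_i^j$ are kept, and the delicate enlarged-versus-kept separation is obtained from the $c_2$-separation of the $Y$-cover exactly as in your cross-term estimate. The only difference is in the parameter bookkeeping (the paper fixes both your $\rho$ and $\tau$ equal to $\tfrac13 c_1 s$ instead of balancing them), which changes the final constant by at most a bounded factor and is immaterial for the applications.
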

\begin{proof}
 Let $s \in [s_0,s_1]$ and take 
 an $s$-bounded cover $\bigcup_{j=0}^n\{U_i^j\}_{i \in I_j}$ of $X$ where    $\{U_i^j\}_{i \in I_j}$
 is $c_1s$-separated, for all $j$.
 Assuming $\frac{1}3c_1s \ge s_0$, take also
 a $\frac{1}3c_1s$-bounded cover $\bigcup_{j=0}^n\{V_i^j\}_{i \in J_j}$ of $Y$ where, for each $j$, the family $\{V_i^j\}_{i \in J_j}$
 is $\frac13c_1c_2s$-separated.
For each $j = 0, \dots, n$, define
\[
K_j = \left\{i \in J_j \,:\, \dist\left(V_i^j , \bigcup_{k \in I_j}U_k^j\right) \ge \frac13c_1s\right\}.
\]
Using $K_j$ write new collections of sets covering $X \cup Y$ as follows. For all $i \in I_j$ and $j = 0, \dots, n$
set 
\[
 W_{(1,i)}^j := U_i^j \cup \bigcup_{\dist\left(V_k^j , U_i^j\right) < \frac13c_1s} V_k^j
\]
and for all $i \in K_j$, $j = 0, \dots, n$
\[
 W_{(2,i)}^j := V_i^j.
\]
Abbreviate $L_j = (\{1\} \times I_j) \cup (\{2\} \times K_j)$. It is easy to check that 
 $\bigcup_{j=0}^n\{W_i^j\}_{i \in L_j}$ is a $\left(1 + \frac23c_1\right)s$-bounded cover
of $X \cup Y$ where each   $\{W_i^j\}_{i \in L_j}$ is $\frac13c_1c_2s$-separated.
 Now
 \[
  \frac{\dist(W_i^j, W_k^j)}{\diam(W_i^j)} \ge \frac{\frac13c_1c_2s}{\left(1 + \frac23c_1\right)s} \ge \frac{c_1c_2}{5}.
 \]
 Therefore
 \[
  \dim_N\left(X \cup Y,\frac{c_1c_2}{5},\left(1 + \frac23c_1\right)s\right) \le n,
 \]
 under the assumptions that we have made for $s$, namely,
 \[
  \frac{3s_0}{c_1}\le s \le s_1.
 \]
 The claim then follows.
\end{proof}

Iterating the above lemma we get the analogous statement for finite unions.
\begin{corollary}\label{cor:Npart}
 Assume that there exist $c \in (0,1)$, $s_0,s_1>0$ and $n, N \in \N$ such that
 \[
  \dim_N(X_i,c,s) \le n, \qquad \text{for all }s_0 \le s \le s_1 \text{ and }i = 1, \dots, N.
 \]
 Then
 \[
  \dim_N\left(\bigcup_{i=1}^NX_i,\frac{c^N}{5^{N-1}},s\right) \le n, \qquad \text{for all }\left(2 + \frac3{c}\right)^{N-1}s_0 \le s \le s_1.
 \]
\end{corollary}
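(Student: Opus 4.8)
The plan is to prove the corollary by induction on $N$, absorbing one more set into the union at each step by a single application of Lemma~\ref{lma:twounion}. The base case $N=1$ is nothing but the hypothesis: $\dim_N(X_1,c,s)\le n$ holds for all $s_0 \le s \le s_1$, which is exactly the asserted bound, since the empty-product factor is $\left(2+\frac{3}{c}\right)^0 = 1$ and the constant is $\frac{c^1}{5^0}=c$.

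For the inductive step, suppose the statement holds for $N-1$, so that
\[
 \dim_N\left(\bigcup_{i=1}^{N-1}X_i,\frac{c^{N-1}}{5^{N-2}},s\right) \le n, \qquad \text{for all }\left(2 + \frac{3}{c}\right)^{N-2}s_0 \le s \le s_1.
\]
First I would intersect this scale range with the range $[s_0,s_1]$ on which $\dim_N(X_N,c,s)\le n$; since $\left(2+\frac{3}{c}\right)^{N-2}\ge 1$, the common range is $\left[\left(2+\frac{3}{c}\right)^{N-2}s_0,\,s_1\right]$ (if this interval is empty the claim is vacuous, so I may assume it is not). I then apply Lemma~\ref{lma:twounion} to this pair, and the crucial point is the choice of which set plays the role of $X$ and which that of $Y$: I would take $X = X_N$ with $c_1 = c$ and $Y = \bigcup_{i=1}^{N-1}X_i$ with $c_2 = \frac{c^{N-1}}{5^{N-2}}$. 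Both constants lie in $(0,1)$ as the lemma requires, since $c\in(0,1)$ and $c_2 = c\,(c/5)^{N-2} < c < 1$.

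With this choice the lemma produces $\bigcup_{i=1}^N X_i$ with new constant $\frac{c_1c_2}{5} = \frac{c^N}{5^{N-1}}$, exactly as claimed, valid for all
\[
 \left(2+\frac{3}{c}\right)\left(2+\frac{3}{c}\right)^{N-2}s_0 = \left(2+\frac{3}{c}\right)^{N-1}s_0 \le s \le \left(1+\frac{2}{3}c\right)s_1.
\]
Since $\left(1+\frac{2}{3}c\right)s_1 \ge s_1$, restricting to $s \le s_1$ gives precisely the interval in the statement, completing the induction.

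Finally, let me flag what I expect to be the only real subtlety. The lower-scale blow-up factor $\left(2+\frac{3}{c_1}\right)$ in Lemma~\ref{lma:twounion} depends only on the constant of the \emph{first} argument, not on $c_2$. Feeding the freshly added set $X_N$ (with its good constant $c$) in as $X$ keeps this factor equal to $\left(2+\frac{3}{c}\right)$ at \emph{every} step, so the lower endpoints multiply cleanly to $\left(2+\frac{3}{c}\right)^{N-1}s_0$. Had I instead placed the accumulated union first, the factor would have involved the rapidly shrinking constant $\frac{c^{N-1}}{5^{N-2}}$, and the lower bound would have degraded roughly like $5^N/c^N$ rather than the stated $\left(2+\frac{3}{c}\right)^{N-1}$. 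Recognizing and exploiting this asymmetry of the lemma is the heart of the argument; the remaining bookkeeping—verifying the constants stay in $(0,1)$ and that the relevant interval is nonempty—is routine.
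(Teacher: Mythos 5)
Your proof is correct and is exactly the iteration of Lemma~\ref{lma:twounion} that the paper invokes (the paper states the corollary with no written proof beyond ``iterating the above lemma''). Your observation that the freshly added set, with its undegraded constant $c$, must be fed in as the \emph{first} argument of the lemma so that the lower-scale blow-up factor stays at $2+\frac{3}{c}$ at every step is precisely the bookkeeping needed to obtain the stated constants $\frac{c^N}{5^{N-1}}$ and $\left(2+\frac{3}{c}\right)^{N-1}s_0$.
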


By the above results (or just by  \cite[Theorem 2.7]{Lang-Schlichenmaier})   finite unions of   sets with Nagata dimension $n$  have Nagata dimension $n$.
The same is not valid for a countable union. Indeed, the space $\Z$
has dimension one but it is the countable union of points, which have dimension zero.
However, we can conclude that the Nagata dimension of a space is $n$, if we know that 
each ball $B(x, r)$, at a fixed point $x\in X$,   has Nagata dimension equal to $n$ for the same constant $c$. 
In the case of separable metric spaces,  we have the following more general fact. 
Suppose that there exists an increasing sequence of
 subsets $X_1 \subset X_2 \subset \cdots  \subset X$ with $\bigcup_{i=1}^\infty X_i = X$ separable and such that all
 $X_i$ have Nagata dimension equal to $n$ with the same constant $c>0$ in \eqref{Nag Dim}. Then one can show that $\dim_NX = n$.
Without the assumption of separability, we can show the following fact, which is the one that we will use to give an upper bound for the Nagata dimension of limits of metric spaces.

\begin{lemma}\label{union}
Let $X$ be a metric space. Let $n\in\N$, $0 < s_1 < \infty$, $0<c<1$ and $x_0 \in X$.
Assume there is  a sequence of radii $r_m\to\infty$ such that
\[
 \dim_N(B(x_0,r_m),c,s) \le n, \qquad \text{for all }0 < s \le s_1 \text{ and all }m = 1,2,\dots.
\]
Then
\[
 \dim_N\left(X,\frac{c^2}5,s\right) \le n, \qquad \text{for all } 0 < s \le \left(1+\frac23c\right)s_1.
\]
\end{lemma}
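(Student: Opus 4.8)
The plan is to exploit that the balls $B(x_0,r_m)$ exhaust $X$ as $r_m\to\infty$, so that $X$ is a countable union of regions each carrying the bound $\dim_N(\,\cdot\,,c,s)\le n$ up to scale $s_1$. The naive approach of iterating the two-set union Lemma~\ref{lma:twounion} (as in Corollary~\ref{cor:Npart}) fails here, because after $N$ steps the separation constant degrades to $c^N/5^{N-1}\to 0$, and a countable union would destroy it entirely. To keep the constant uniformly bounded below I would instead partition $X$ into annuli, two-color them so that annuli of the same color are mutually far apart, and then combine the two monochromatic families with a \emph{single} application of Lemma~\ref{lma:twounion}.

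Concretely, since $r_m\to\infty$ I would first extract a subsequence and relabel it $0<\rho_1<\rho_2<\cdots$ with $\rho_j\to\infty$ and with every consecutive gap $\rho_{j+1}-\rho_j\ge c\,s_1$. Setting $\rho_0:=0$, define the annuli $A_j:=\{x\in X:\rho_j\le d(x,x_0)<\rho_{j+1}\}$ for $j\ge 0$; these partition $X$, and each $A_j$ is contained in the ball $B(x_0,\rho_{j+1})=B(x_0,r_{m_{j+1}})$. Hence by the subset monotonicity~\eqref{lma:subset} one has $\dim_N(A_j,c,s)\le n$ for all $0<s\le s_1$ and all $j$. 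Moreover the triangle inequality gives, for any two annuli $A_j,A_k$ with $k\ge j+2$, the separation $\dist(A_j,A_k)\ge \rho_k-\rho_{j+1}\ge \rho_{j+2}-\rho_{j+1}\ge c\,s_1$. Therefore the even family $\{A_{2k}\}_k$ and the odd family $\{A_{2k+1}\}_k$ are each $c\,s_1$-separated.

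Next I would apply Lemma~\ref{lma:dimforseparated} to each family. Writing $X_{\mathrm{ev}}=\bigcup_k A_{2k}$, its pieces are $r$-separated with $r\ge c\,s_1\ge c\,s$ for every $s\le s_1$, so $\min\{c,r/s\}=c$ and the lemma yields $\dim_N(X_{\mathrm{ev}},c,s)\le n$ for all $0<s\le s_1$; the same holds for $X_{\mathrm{odd}}=\bigcup_k A_{2k+1}$. Since $X=X_{\mathrm{ev}}\cup X_{\mathrm{odd}}$, I would finish by invoking Lemma~\ref{lma:twounion} with $c_1=c_2=c$ and $s_0=0$, which returns exactly the claimed bound $\dim_N\big(X,\tfrac{c^2}{5},s\big)\le n$ for all $0<s\le(1+\tfrac23 c)s_1$.

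The one point requiring care, and the only real obstacle, is the bookkeeping of constants: the gap size $c\,s_1$ must be chosen precisely so that the separated-union step of Lemma~\ref{lma:dimforseparated} does \emph{not} shrink the constant below $c$ (that is, so that $\min\{c,r/s\}=c$ for all admissible $s$), since otherwise the two-set combination would no longer produce $c^2/5$. The conceptual content is simply that a two-coloring of the annuli reduces an a priori infinite iteration to a single application of the finite-union lemma.
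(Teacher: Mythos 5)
Your proof is correct and follows essentially the same route as the paper: after sparsifying the sequence of radii so that consecutive gaps exceed $c\,s_1$, the paper likewise partitions $X$ into annuli, splits them into two families of mutually $c\,s_1$-separated pieces (its $Y$ and $Z$, which are exactly your even and odd families up to indexing of the innermost ball), applies Lemma~\ref{lma:dimforseparated} to each, and concludes with a single application of Lemma~\ref{lma:twounion}. Your attention to keeping $\min\{c,r/s\}=c$ matches the paper's choice of gap size.
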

\begin{proof}
By taking a subsequence if necessary, we may assume that $r_{m+1}-r_m > cs_1$.
Let
\[
A_k:=B(x_0,r_{2k})\setminus B(x_0,r_{2k-1}), \qquad
Y:= \bigcup_{k=1}^\infty A_k,\qquad \text{ and }\qquad Z:=X\setminus Y. 
\]
Notice that
\[
 Z = B(x_0,r_1)\cup \bigcup_{k=1}^\infty B(x_0,r_{2k+1})\setminus B(x_0,r_{2k}).
\]

For all $0 < s \le s_1$, we have $\dim_N(A_k,c,s) \le \dim_N(B(x_0,r_{2k}),c,s) \le n$ and the family $\{A_k\}$
 is $cs_1$-separated. Therefore, by Lemma~\ref{lma:dimforseparated}, 
\[
 \dim_N(Y,c,s) \le n, \qquad \text{for all } 0 < s \le s_1.
\]
Similarly,
\[
 \dim_N(Z,c,s) \le n, \qquad \text{for all } 0 < s \le s_1.
\]
Therefore the claim follows from Lemma~\ref{lma:twounion}.
\end{proof}

%


Next lemma was observed together with Urs Lang. 
We will use the  lemma and the example afterward to show that the Nagata dimension of Carnot groups is equal to the topological one.
\begin{lemma}[{See also \cite[Proposition 2.8]{Lang-Schlichenmaier}}] \label{local}
Let $X$ be a metric space. 
Suppose there exist positive constants $r,c$ and integers $L,n$ such 
that every ball $B(x,r)$, $x \in X$, is doubling with constant $L$ and has 
Nagata dimension at most $n$ with constant $c$. 
Then $X$ has linearly controlled dimension at most~$n$.
\end{lemma}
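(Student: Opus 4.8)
The plan is to fix a small scale $s \ll r$, produce a good Nagata cover inside each fixed ball $B(x,r)$, and patch these local covers into a single global cover \emph{without} inflating the dimension past $n$. The hypothesis, read through the equivalent formulation \eqref{Nag Dim} together with \eqref{lma:subset}, says precisely that $\dim_N(S,c,s)\le n$ for every subset $S$ of a ball $B(x,r)$ and every $s>0$; after possibly shrinking $c$ we may assume $c\in(0,1)$.

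First I would choose a maximal $\rho$-separated net $\{x_i\}_{i\in I}$ in $X$ with $\rho:=r/3$, so that the balls $B(x_i,\rho)$ cover $X$. Assigning each point of $X$ to a single net point within distance $\rho$ yields a partition $X=\bigcup_i V_i$ with $V_i\subseteq B(x_i,\rho)\subseteq B(x_i,r)$. Since $B(x_j,3\rho)=B(x_j,r)$ is doubling with constant $L$, iterating the doubling three times covers it by $L^3$ sets of diameter $<\rho$, each containing at most one net point; hence at most $L^3$ net points lie in any such ball. Consequently, in the graph on $I$ joining $i\sim j$ whenever $d(x_i,x_j)<3\rho$ every vertex has degree $<L^3$, so $I$ admits a proper coloring with $M:=L^3$ colors. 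Within one color class the centers are $\ge 3\rho$ apart, and since $V_i\subseteq B(x_i,\rho)$ the corresponding cells are then pairwise $\rho$-separated.

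Writing $W_\ell:=\bigcup_{i\in I_\ell}V_i$ for the union over the $\ell$-th color class, each $W_\ell$ is a $\rho$-separated union of the sets $V_i$, and each $V_i$ satisfies $\dim_N(V_i,c,s)\le n$. For every $0<s\le\rho$ Lemma~\ref{lma:dimforseparated} therefore gives $\dim_N(W_\ell,c,s)=\dim_N\!\big(W_\ell,\min\{c,\rho/s\},s\big)\le n$, and this holds for the \emph{fixed} sets $W_\ell$ simultaneously for all $0<s\le\rho$. Since $X=\bigcup_{\ell=1}^M W_\ell$ is a finite union, I would then invoke Corollary~\ref{cor:Npart} (with $N=M$, constant $c$, $s_1=\rho$, and $s_0\downarrow 0$) to conclude $\dim_N\!\big(X,\,c^{M}/5^{M-1},\,s\big)\le n$ for all $0<s\le\rho$. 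By Definition~\ref{lin con dim} this is exactly $\dim_{LC}X\le n$, with controlling constant $c'=c^{M}/5^{M-1}$ and threshold $s_0=\rho=r/3$.

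The crucial point — and where a naive argument collapses — is that the coloring must not multiply the dimension: simply taking the union of the $M$ color classes' covers would produce a cover of order $M(n+1)-1$ colors. The whole argument hinges on Corollary~\ref{cor:Npart}, the sharp finite-union estimate, which keeps the dimension fixed at $n$ and absorbs the cost of merging entirely into the constant $c^{M}/5^{M-1}$. The remaining delicate ingredient is the packing/coloring estimate, but here doubling of the fixed-radius balls furnishes the bound $M=M(L)$ cleanly and, importantly, independently of the scale $s$, which is exactly what makes the final constant $c'$ and threshold $s_0$ uniform.
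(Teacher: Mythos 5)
Your argument is correct and follows essentially the same route as the paper's proof: a maximal $r/3$-separated net, a doubling-based bound on the number of nearby net points, a coloring into boundedly many $r/3$-separated color classes handled by Lemma~\ref{lma:dimforseparated}, and finally the finite-union Corollary~\ref{cor:Npart} to merge the color classes without raising the dimension. The only (cosmetic) difference is that you partition $X$ into Voronoi-type cells $V_i$ while the paper works directly with the overlapping balls $B(z,r/3)$ in each color class.
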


\begin{proof}
Put $s_0 := r/3$ and choose a set $Z \subseteq X$ that is maximal, 
with respect to inclusion, subject to the condition that distinct points 
in $Z$ are at distance more than $ s_0$ from each other. Then the family of 
balls $B(z,s_0)$, $z \in Z$, covers $X$. By the doubling condition, 
there is an integer $N$ such every ball $B(z,3s_0)$ can be covered by $N$ 
or fewer sets of diameter at most $ s_0$, so $Z \cap B(z,3s_0)$ has cardinality 
at most $N$. It follows that there is a coloring of $Z$ by $N$ colors,
$k \colon Z \to \{1,\dots,N\}$, such that $k(z) \ne k(z')$ whenever
$0 < d(z,z') \le 3s_0$ (see \cite[Lemma~2.4]{Assouad83}). 
Let $C_k$ be the union of all balls $B(z,s_0)$ with $k(z) = k$, 
for $k = 1,\dots,N$. Clearly every $C_k$ has linearly controlled 
dimension at most $n$. Now use finite gluing 
(Corollary \ref{cor:Npart}) 
to show that $X$ has linearly controlled dimension at most~$n$. 
\end{proof}

\begin{example}\label{ex:Liegroup}
Assume $G$ is a Lie group equipped with a left-invariant Riemannian distance. If $n$ is the topological dimension of $G$, then $G$ has linearly controlled dimension $n$. 
Indeed, fixing $r>0$ small enough, all balls $B(x,r)$, for $x\in G$, are isometric to each other  and biLipschitz to an open set of the $n$-dimensional Euclidean space.
In particular, such balls are doubling and have linearly controlled dimension at most $n$, with uniform constants. Hence, Lemma~\ref{local} gives one of the bounds.
Recall that it is a general fact that the topological dimension does not exceed the linearly controlled dimension, see \cite[Theorem 2.2]{Lang-Schlichenmaier}. 
\end{example}

\subsection{Gromov-Hausdorff distance of pointed metric spaces}\label{sec:GH}
For defining limits of unbounded metric spaces, we need to consider pointed metric spaces. A {\em pointed metric space} is a pair $(X,x)$ of a metric space $X=(X,d)$ and a point $x\in X$.  
Recall that, when we are considering a metric space $(X,d)$, we tend to refer to it simply as $X$ whenever it is not necessary to specify the distance. Moreover, we denote by $d$ the distance of any metric space that we are considering.
Likewise, we will denote simply by $X$ a pointed metric space when it is not important to specify the base point. In case we need to denote the base point (and the metric space in discussion is clear) we will use the symbol  $\star$.

For a set $A$ in a metric space $X$ we denote the $\delta$-neighborhood of $A$ as
\[
 B(A,\delta) := B_X(A,\delta) := \{x \in X \,:\, \dist(x,A)<\delta\}.
\]
Given two metric spaces $X=(X, d_X)$ and $Y=(Y,d_Y)$, we say that $d$ is 
{\em an extension of the distances on } $Y \sqcup   X$ if $d$ is a semidistance on the set $Y \sqcup   X$ (i.e., $d$ might vanish on distinct points) and it coincides with $d_X$ when restricted to $X$ and 
coincides with $d_Y$ when restricted to $Y$. 

We shall use the Gromov-Hausdorff convergence for pointed metric spaces. We will actually want to have a precise distance giving such a topology.
Inspired by the definitions of Gromov and Gabber, see \cite[Section 6]{Gromov-polygrowth}, we define the modified Gromov-Hausdorff measurement:
for all pointed metric spaces  $X=(X, \star_X)$ and $Y=(Y,\star_Y)$, we set
	$$\widetilde{\rm Dist}_{GH} ( \,X, Y \, ) = 
	\inf\left\{ \epsilon>0 \left|
	\left.\begin{array}{ccc}
	\exists \text{ an extension of   }   \\
	\text{  the distances on }  
	\end{array}\right.
	Y \sqcup   X:
	\left.\begin{array}{ccc}
	d(\star_X,\star_Y)\leq \epsilon\\
	B_{  X}(\star_X, 1/\epsilon) \subseteq B_{Y \sqcup   X} ( Y, \epsilon)\\
	B_{Y}(\star_Y, 1/\epsilon) \subseteq B_{Y \sqcup   X} (   X, \epsilon)
	\end{array}\right.\right. \right\}.$$
Probably, the function $\widetilde{\rm Dist}_{GH}$ is not a distance, since it is not clear whether it satisfies the triangle inequality, as Gromov already pointed out.
{However, using the following Lemma~\ref{lma:distghprop} we can easily modify it to be a distance.}
\begin{lemma}\label{lma:distghprop}
Let $X,Y, Z$ be pointed metric spaces.
\begin{itemize}
\item[(i)] If both $\widetilde{\rm Dist}_{GH}(X,Y), \widetilde{\rm Dist}_{GH}(Y,Z)\leq 1/2$, then
 $$ \widetilde{\rm Dist}_{GH}(X,Z)\leq\widetilde{\rm Dist}_{GH}(X,Y)+ \widetilde{\rm Dist}_{GH}(Y,Z).$$
\item[(ii)] Consequently, the function $\min\{ 1/2 , \widetilde{\rm Dist}_{GH}\}$ satisfies the triangle inequality.
\end{itemize}
\end{lemma}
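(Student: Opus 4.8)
The plan is to reduce both parts to a single gluing statement: whenever there exist extensions of the distances on $X \sqcup Y$ and on $Y \sqcup Z$ realizing values $\epsilon_1 \le 1/2$ and $\epsilon_2 \le 1/2$ in the definition of $\widetilde{\rm Dist}_{GH}$, one can build an extension of the distances on $X \sqcup Z$ realizing the value $\epsilon_1 + \epsilon_2$, so that $\widetilde{\rm Dist}_{GH}(X,Z) \le \epsilon_1 + \epsilon_2$. Granting this, part $(i)$ follows by taking the infimum over $\epsilon_1 > \widetilde{\rm Dist}_{GH}(X,Y)$ and $\epsilon_2 > \widetilde{\rm Dist}_{GH}(Y,Z)$ kept $\le 1/2$, which is possible since both measurements are $\le 1/2$. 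Part $(ii)$ is then immediate: if $\widetilde{\rm Dist}_{GH}(X,Y)$ or $\widetilde{\rm Dist}_{GH}(Y,Z)$ exceeds $1/2$, the triangle inequality for $\min\{1/2,\widetilde{\rm Dist}_{GH}\}$ holds trivially because its right-hand side is already at least $1/2$, and otherwise we are exactly in the range covered by $(i)$.

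For the gluing, I would fix extensions $d_1$ on $X \sqcup Y$ and $d_2$ on $Y \sqcup Z$ as above; both restrict to the given distance on the nonempty common factor $Y$. On $X \sqcup Y \sqcup Z$ I define $d$ to agree with $d_1$ on $X \sqcup Y$, with $d_2$ on $Y \sqcup Z$, and across $X$ and $Z$ by the shortest-detour-through-$Y$ formula $d(x,z) := \inf_{y \in Y}\big(d_1(x,y) + d_2(y,z)\big)$. First I would check that $d$ is a semidistance; symmetry and vanishing on the diagonal are clear, and the only triangle inequalities needing care are those involving the new mixed distance. These all follow from the triangle inequalities for $d_1$ and $d_2$ together with the fact that $d_1$ and $d_2$ agree on $Y$: the bound $d(x,z) \le d_1(x,y') + d_2(y',z)$ for every $y'$ handles the inequalities routed through an intermediate point of $Y$, while for a pair $x \in X$, $y \in Y$ one uses $d_1(x,y) \le d_1(x,y') + d_2(y',z) + d_2(z,y)$ and takes the infimum over $y'$. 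Restricting $d$ to $X \sqcup Z$ then yields an extension of the distances on $X \sqcup Z$.

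It remains to verify the three defining conditions for this extension with $\epsilon := \epsilon_1 + \epsilon_2$. The basepoint condition is immediate from $d(\star_X,\star_Z) \le d_1(\star_X,\star_Y) + d_2(\star_Y,\star_Z) \le \epsilon_1 + \epsilon_2$. For the inclusion $B_X(\star_X,1/\epsilon) \subseteq B(Z,\epsilon)$, take $x$ with $d(x,\star_X) < 1/\epsilon \le 1/\epsilon_1$; the first condition for $d_1$ produces $y \in Y$ with $d_1(x,y) < \epsilon_1$, and the triangle inequality gives $d(y,\star_Y) \le d_1(y,x) + d_1(x,\star_X) + d_1(\star_X,\star_Y) < 2\epsilon_1 + 1/\epsilon$. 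The point is that $2\epsilon_1 + 1/\epsilon \le 1/\epsilon_2$, which is equivalent to $\epsilon_2(\epsilon_1+\epsilon_2) \le 1/2$, so that $y \in B_Y(\star_Y,1/\epsilon_2)$; the corresponding condition for $d_2$ then gives $z \in Z$ with $d_2(y,z) < \epsilon_2$ and hence $d(x,z) \le \epsilon_1 + \epsilon_2 = \epsilon$. The symmetric inclusion $B_Z(\star_Z,1/\epsilon) \subseteq B(X,\epsilon)$ requires $\epsilon_1(\epsilon_1+\epsilon_2) \le 1/2$, and both constraints are guaranteed precisely by $\epsilon_1,\epsilon_2 \le 1/2$, since then $\max\{\epsilon_1,\epsilon_2\}\,(\epsilon_1+\epsilon_2) \le \tfrac12 \cdot 1 = \tfrac12$.

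I expect the only real subtlety to be the bookkeeping of the threshold $1/2$ rather than the gluing itself. The computation above forces $\epsilon_1,\epsilon_2 \le 1/2$, so the infimum argument for $(i)$ runs cleanly when $\widetilde{\rm Dist}_{GH}(X,Y)$ and $\widetilde{\rm Dist}_{GH}(Y,Z)$ are both strictly below $1/2$, while the borderline case where one of them equals $1/2$ must be recovered by a limiting argument: allowing the target value $\epsilon$ to be chosen slightly larger than $\epsilon_1+\epsilon_2$ relaxes the two constraints toward $2\epsilon_1\epsilon_2 \le 1$ and still yields $\widetilde{\rm Dist}_{GH}(X,Z) \le \widetilde{\rm Dist}_{GH}(X,Y)+\widetilde{\rm Dist}_{GH}(Y,Z)$ in the limit. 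For the intended application $(ii)$ this borderline issue is in any case irrelevant, since whenever the right-hand side of the triangle inequality for $\min\{1/2,\widetilde{\rm Dist}_{GH}\}$ lies below $1/2$, both summands are automatically strictly below $1/2$.
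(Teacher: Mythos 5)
Your proposal is correct and follows exactly the paper's route: the paper's entire proof consists of exhibiting the glued semidistance $d_{X\sqcup Z}(x,z)=\inf_{y\in Y}\bigl(d_{Y\sqcup X}(x,y)+d_{Z\sqcup Y}(y,z)\bigr)$, and you have simply carried out in full the verification (semidistance axioms, the three defining conditions with $\epsilon=\epsilon_1+\epsilon_2$, and the role of the threshold $1/2$ via $\max\{\epsilon_1,\epsilon_2\}(\epsilon_1+\epsilon_2)\le 1/2$) that the paper leaves implicit.
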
 

\begin{proof}
 The claim follows by considering  distances of the form 
 \[
  d_{X \sqcup   Z}(x,z) := \inf_{y \in Y}\left\{d_{Y \sqcup   X}(x,y) + d_{Z \sqcup  Y}(y,z)\right\}  
 \]
 defined from extensions $d_{Y \sqcup   X}$ and $d_{Z \sqcup  Y}$ of the distances on $X$,$Y$, and $Z$.
\end{proof}


We define the {\em Gromov-Hausdorff distance} of two pointed metric spaces $X, Y$ as
  $${\rm Dist}_{GH}( \,X, Y \, ):=\min\{ 1/2 , \widetilde{\rm Dist}_{GH}( X, Y  )\}.$$

Other simple properties that hold for $\widetilde{\rm Dist}_{GH}$ and thus for ${\rm Dist}_{GH}$ are the following.
\begin{lemma}\label{lemma_moving}
Let $X$ be a  metric space.
\begin{itemize}
\item[(i)] For all $\lambda_1 >\lambda_2>0$ and all $x\in X$,
 $$  {\rm Dist}_{GH}((\lambda_1 X,x) ,(\lambda_2 X,x) )\leq
\sqrt{\dfrac{\lambda_1}{\lambda_2}-1}.$$
\item[(ii)] For all   $x, x'\in X$,
 $  {\rm Dist}_{GH}(( X,x) ,( X,x') )\leq
d(x,x').$
\item[(iii)] The function
 $ (x, \lambda) \mapsto     ( \lambda X,x) $
 is continuous. In fact, it is H\"older on compact sets of $(0,\infty)\times X$.
\end{itemize}
\end{lemma}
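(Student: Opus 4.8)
The three parts rest on the same mechanism: to bound ${\rm Dist}_{GH} = \min\{1/2,\widetilde{\rm Dist}_{GH}\}$ from above by a candidate value $\epsilon$, it suffices to produce a semidistance (vanishing on distinct points is allowed) on the disjoint union of the two spaces realizing the three conditions in the definition of $\widetilde{\rm Dist}_{GH}$; if $\epsilon \ge 1/2$ there is nothing to prove, so in each case I may assume $\epsilon<1/2$. For (ii) the two spaces carry the same metric $d_X$ and differ only in the basepoint, so I glue the two copies along the identity at distance $0$ (equivalently, I pull back $d_X$ to the union via the diagonal identification). Then every point of one copy lies at distance $0$ from the other, so both neighborhood inclusions hold for every positive $\epsilon$, while $d(\star_X,\star_Y)=d_X(x,x')$; taking $\epsilon=d_X(x,x')$ yields $\widetilde{\rm Dist}_{GH}((X,x),(X,x'))\le d_X(x,x')$.

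For (i) the underlying set is again the same, but the metrics $\lambda_1 d_X$ and $\lambda_2 d_X$ differ by the factor $\lambda_1/\lambda_2>1$, so gluing at distance $0$ is impossible. I will instead glue with a position-dependent cost: writing $a_1,a_2$ for the copies of $a\in X$ in $\lambda_1 X$ and $\lambda_2 X$, I attach to each $a$ a bridge of length $\phi(a):=(\lambda_1-\lambda_2)\,d_X(a,x)$ between $a_1$ and $a_2$ and take the induced path semidistance. The choice of $\phi$ proportional to the distance from the basepoint gives the inequality $\phi(c)+\phi(c')\ge(\lambda_1-\lambda_2)\,d_X(c,c')$ directly from the triangle inequality in $X$, and this is exactly what guarantees that no excursion through the other copy shortens distances, so the restriction of the path semidistance to each copy is genuinely $\lambda_1 d_X$ resp.\ $\lambda_2 d_X$. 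With this extension $d(a_1,a_2)\le\phi(a)$ and $\phi(x)=0$, so $d(\star_X,\star_Y)=0$; and for $\epsilon:=\sqrt{\lambda_1/\lambda_2-1}$ a point $a_1$ with $\lambda_1 d_X(a,x)<1/\epsilon$ satisfies $d(a_1,a_2)<\epsilon$ since $(\lambda_1-\lambda_2)/\lambda_1<(\lambda_1-\lambda_2)/\lambda_2=\epsilon^2$, and symmetrically for $a_2$ with $\lambda_2 d_X(a,x)<1/\epsilon$. Hence both inclusions hold and $\widetilde{\rm Dist}_{GH}\le\epsilon$.

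For (iii) I combine (i) and (ii) through the triangle inequality for ${\rm Dist}_{GH}$, available since ${\rm Dist}_{GH}=\min\{1/2,\widetilde{\rm Dist}_{GH}\}$ satisfies it by Lemma~\ref{lma:distghprop}(ii). Inserting the intermediate space $(\lambda X,x')$,
\[
{\rm Dist}_{GH}((\lambda X,x),(\lambda' X,x')) \le {\rm Dist}_{GH}((\lambda X,x),(\lambda X,x')) + {\rm Dist}_{GH}((\lambda X,x'),(\lambda' X,x')),
\]
I bound the first summand by $\lambda\,d_X(x,x')$ via (ii) applied to the space $\lambda X$, and the second by $\sqrt{\max\{\lambda,\lambda'\}/\min\{\lambda,\lambda'\}-1}$ via (i). On a compact set $K\subset(0,\infty)\times X$ the scales stay in some $[a,b]\subset(0,\infty)$, so $\max/\min-1=|\lambda-\lambda'|/\min\{\lambda,\lambda'\}\le|\lambda-\lambda'|/a$ and the whole bound is dominated by $b\,d_X(x,x')+a^{-1/2}|\lambda-\lambda'|^{1/2}$; since on a bounded set the Lipschitz term is itself controlled by a constant times $d_X(x,x')^{1/2}$, the map is H\"older of exponent $1/2$ on $K$, and in particular continuous.

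The main obstacle is (i), precisely the tension between preserving the two metrics \emph{exactly} and keeping corresponding points close: a constant bridge length fails, because the multiplicative gap $(\lambda_1-\lambda_2)\,d_X(c,c')$ between the two intrinsic distances grows with $d_X(c,c')$ and would create shortcuts that deform the restricted metrics. The linearly growing cost $\phi$ resolves this, being $0$ at the basepoint and small enough inside the balls of radius $1/\epsilon$ that govern the pointed conditions while still killing every shortcut. The only remaining routine point is to verify that excursions crossing several times likewise cannot shorten distances, which follows by a short induction reducing each excursion to the single-crossing inequality $\phi(c)+\phi(c')\ge(\lambda_1-\lambda_2)\,d_X(c,c')$.
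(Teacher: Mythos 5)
Your proof is correct, and for parts (ii) and (iii) it coincides with the paper's (glue the two copies along the identity for (ii); insert the intermediate space $(\lambda X,x')$ and use the triangle inequality from Lemma~\ref{lma:distghprop}(ii) for (iii)). For part (i) you take a different route in implementation: the paper embeds $X$ isometrically into $L^\infty(X)$ and reads off the distance between the two dilated copies from the ambient dilations of the Banach space, whereas you build the extension of the distances on $\lambda_1 X \sqcup \lambda_2 X$ by hand, with a bridge of length $\phi(a)=(\lambda_1-\lambda_2)\,d_X(a,x)$ between corresponding points. The two constructions encode the same key quantity --- under the Kuratowski embedding based at $x$ one has $\|\lambda_1\iota(a)-\lambda_2\iota(a)\|_\infty=(\lambda_1-\lambda_2)\,d_X(a,x)$, which is exactly your $\phi(a)$ --- and both then conclude with the same balancing choice $\epsilon=\sqrt{\lambda_1/\lambda_2-1}$ against the radius $1/\epsilon$. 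What your version buys is self-containedness (no ambient space, and the extension required by the definition of $\widetilde{\rm Dist}_{GH}$ is produced directly); what it costs is the extra verification that the glued path semidistance restricts to $\lambda_1 d_X$ and $\lambda_2 d_X$ without shortcuts, which you correctly reduce to $\phi(c)+\phi(c')\ge(\lambda_1-\lambda_2)\,d_X(c,c')$ and a routine induction on the number of crossings; the Banach-space route gets this for free from the ambient norm. Both arguments are complete and give the stated bound.
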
 
\begin{proof}
 To easily  obtain (i) 
 one can isometrically embed $X$ into the Banach space $L^\infty(X)$. 
The dilations of the ambient Banach space give a straightforward calculation of the Hausdorff distance of 
$\lambda_1X$ and $\lambda_2X$.

%
%
 To see (ii) use the original distance as the extension. 
 
 The claim (iii) follows from (i) and (ii).
\end{proof}

 To define tangent metric spaces, we dilate a metric space and   consider the accumulation points of such sequences of dilated spaces.
  Recall that whenever the distance and the base point are clear we just write 
$X$ to denote the 
pointed metric space $(X,d,\star)$, or we write $d=d_X$ to emphasize that $d$ is the distance on $X$. 
Moreover, we denote by $\lambda X$ the pointed metric space obtained by dilating the distance by $\lambda>0$. Namely, we set $\lambda X:=(X,\lambda d_X,\star).$ 
  
   \begin{definition}[Tangent metric spaces]\label{tangent spaces} Let  $(X,d_X)$ be a metric space and  $x$ a point of it.
Then a pointed metric space
$(Y,   d_Y,y)$ is said to be 
 {\em a tangent} of $(X,d_X)$ at $x$
if 
$(X,\lambda d_X,x)$
accumulate  to 
$(Y,   d_Y,y)$ in the pointed Gromov-Hausdorff topology, as $\lambda \to \infty$.
Namely, there exists a diverging sequence $\lambda_j$ such that 
 $${\rm Dist}_{GH}(( X ,\lambda_j d_X ,x) ,( Y,d_Y,y) ) \to 0 , \, \text{ as } j\to \infty.$$
 We denote by 
 ${\rm Tan}(X,x)$ the collection of all tangents of $(X,d_X)$ at $x$.
 \end{definition}
 
It is easy to come up with examples of metric spaces with more than one tangent at a given point.
 Here is an easy criterion (which follows from Lemma~\ref{lemma_moving}(i)) to conclude that the tangent at a point is unique.
 \begin{lemma}
Let $X$ and $Y$ be two pointed metric spaces.
\begin{itemize}
\item[(i)] If $n X \to Y$, as $n\to \infty, n\in \N$, then 
$\lambda X \to Y$, as $\lambda\to \infty, \lambda\in \R$.
\item[(ii)] More generally, let $a_n$ be a diverging sequence such that 
$a_{n+1}/a_n \to 1$, as $n\to \infty$.
If $a_n X \to Y$, as $n\to \infty$, then 
$\lambda X \to Y$, as $\lambda\to \infty, \lambda\in \R$. 
\end{itemize}
\end{lemma}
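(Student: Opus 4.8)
The plan is to derive everything from the dilation estimate in Lemma~\ref{lemma_moving}(i) together with the triangle inequality for ${\rm Dist}_{GH}$ guaranteed by Lemma~\ref{lma:distghprop}. First I would observe that (i) is merely the special case of (ii) obtained by setting $a_n = n$, since then $a_{n+1}/a_n = (n+1)/n \to 1$; so it suffices to prove (ii).

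For (ii), fix $\eps > 0$. Since $a_{n+1}/a_n \to 1$, there is an index $N$ such that $(1-\eps)a_n < a_{n+1} < (1+\eps)a_n$ for all $n \ge N$. The core of the argument is to show that the scales $\{a_n\}$ become multiplicatively dense at infinity, so that every large real $\lambda$ is approximated in ratio by some $a_m$. Concretely, for $\lambda > a_N$ I would let $m = m(\lambda)$ be the smallest index $\ge N$ with $a_m \ge \lambda$; this exists because $a_n \to \infty$. By minimality and $\lambda > a_N$ one has $m > N$ and $a_{m-1} < \lambda$, whence $\lambda \le a_m < (1+\eps)a_{m-1} < (1+\eps)\lambda$. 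Thus $1 \le a_m/\lambda < 1+\eps$. Moreover $a_{m(\lambda)} \ge \lambda \to \infty$ forces $m(\lambda) \to \infty$ as $\lambda \to \infty$, since the sequence takes only finitely many values on any bounded set of indices.

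With this selection in hand, Lemma~\ref{lemma_moving}(i) gives ${\rm Dist}_{GH}(\lambda X, a_{m(\lambda)}X) \le \sqrt{a_{m(\lambda)}/\lambda - 1} < \sqrt{\eps}$, while the hypothesis $a_n X \to Y$ together with $m(\lambda) \to \infty$ gives ${\rm Dist}_{GH}(a_{m(\lambda)}X, Y) \to 0$. Applying the triangle inequality for the truncated distance ${\rm Dist}_{GH}$ yields
\[
 {\rm Dist}_{GH}(\lambda X, Y) \le {\rm Dist}_{GH}(\lambda X, a_{m(\lambda)}X) + {\rm Dist}_{GH}(a_{m(\lambda)}X, Y) < \sqrt{\eps} + o(1)
\]
as $\lambda \to \infty$. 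Hence $\limsup_{\lambda \to \infty}{\rm Dist}_{GH}(\lambda X, Y) \le \sqrt{\eps}$, and letting $\eps \to 0$ completes the proof.

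I expect the only genuinely delicate point to be the selection of $m(\lambda)$: since the divergent sequence $a_n$ need not be monotone, I cannot simply round $\lambda$ down to the nearest $a_n$. The \emph{first passage} definition of $m(\lambda)$ sidesteps this, using only that $a_n \to \infty$ together with the two-sided consecutive-ratio control, and it is precisely here that the hypothesis $a_{n+1}/a_n \to 1$ (rather than mere divergence) is essential.
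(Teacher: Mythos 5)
Your proof is correct and follows exactly the route the paper intends: the paper gives no written proof, only the parenthetical remark that the lemma ``follows from Lemma~\ref{lemma_moving}(i)'', and your argument---interpolating $\lambda$ between consecutive scales $a_{m-1}<\lambda\le a_m$ with ratio close to $1$, applying the dilation estimate of Lemma~\ref{lemma_moving}(i), and concluding via the triangle inequality from Lemma~\ref{lma:distghprop}---is precisely the intended filling-in of that remark. The first-passage choice of $m(\lambda)$ correctly handles the possible non-monotonicity of $(a_n)$, and the reduction of (i) to (ii) is immediate.
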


\subsection{Some remarks on the dimensions of limits}

In this section we prove that the dimensions of the tangents bound from below the dimension of a space. 
Regarding the Assouad dimension, this fact is a quantified version of the well-known result that 
limits of doubling metric spaces, with uniform constants, are doubling.  
\begin{lemma}\label{lem_ass_limit}
 Suppose that a sequence of pointed metric spaces $X_j$ converges to a pointed metric space $X_\infty$. 
Let  $\beta\geq0$, $C>1$, and $\bar R>0$.
If, 
 for all $j$, 
 $\dim_A(X_j,C,  \bar R)\leq 
\beta$, then
 $\dim_A(X_\infty,C,  \bar R)\leq 
\beta$.
%
%
 \end{lemma}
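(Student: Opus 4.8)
The plan is to verify the defining property of $\dim_A(X_\infty,C,\bar R)\le\beta$ directly, by transporting covers from the approximating spaces $X_j$ back to $X_\infty$ through the Gromov--Hausdorff extensions of the distances. So I fix $0<r<R<\bar R$ and a ball $B_{X_\infty}(p,R)$; the goal is to cover it with at most $C(R/r)^\beta$ balls of radius $r$. I would introduce a small parameter $\eps>0$, to be sent to $0$ at the end, chosen small enough that $R+2\eps<\bar R$, that $\eps<r/2$, and that $1/\eps>d(\star_{X_\infty},p)+R+1$, so that the whole ball $B_{X_\infty}(p,R)$ lies inside $B_{X_\infty}(\star_{X_\infty},1/\eps)$.

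For $j$ large we have ${\rm Dist}_{GH}(X_j,X_\infty)<\eps$, hence an extension $d$ of the distances on $X_j\sqcup X_\infty$ satisfying the three defining inequalities. First I would pick $p'\in X_j$ with $d(p,p')<\eps$, and observe that every $q\in B_{X_\infty}(p,R)$ has a companion $q'\in X_j$ with $d(q,q')<\eps$, so that $q'\in B_{X_j}(p',R+2\eps)$ by the triangle inequality for the semidistance. Applying the hypothesis $\dim_A(X_j,C,\bar R)\le\beta$ with radii $r':=r-2\eps<R+2\eps<\bar R$, I cover $B_{X_j}(p',R+2\eps)$ by balls $B_{X_j}(c_1,r'),\dots,B_{X_j}(c_N,r')$ with $N\le C\bigl((R+2\eps)/(r-2\eps)\bigr)^\beta$. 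Each center $c_i$ lies within $1/\eps$ of $\star_{X_j}$ (for $\eps$ small), hence admits some $\tilde c_i\in X_\infty$ with $d(c_i,\tilde c_i)<\eps$. Chaining the estimates $d(q,q')<\eps$, $d(q',c_i)<r'$, $d(c_i,\tilde c_i)<\eps$ through the triangle inequality shows $B_{X_\infty}(p,R)\subseteq\bigcup_i B_{X_\infty}(\tilde c_i,r'+2\eps)=\bigcup_i B_{X_\infty}(\tilde c_i,r)$, a cover of the required ball by $N$ balls of $X_\infty$ of radius exactly $r$.

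The main obstacle is that this only yields $N\le C\bigl((R+2\eps)/(r-2\eps)\bigr)^\beta$, which strictly exceeds the sharp target $C(R/r)^\beta$ for fixed $\eps>0$, while $N$ is a cardinality and cannot simply be passed to the limit inside the cover. I would resolve this by noting that the right-hand side stays bounded as $\eps\to0$, so the integers $N=N(\eps)$ produced along a sequence $\eps_k\to0$ take only finitely many values; some value $N_0$ recurs infinitely often, and along that subsequence $N_0\le C\bigl((R+2\eps_k)/(r-2\eps_k)\bigr)^\beta\to C(R/r)^\beta$, forcing $N_0\le C(R/r)^\beta$. For such an $\eps_k$ the construction above produces a genuine cover of $B_{X_\infty}(p,R)$ by $N_0\le C(R/r)^\beta$ balls of radius $r$. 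Since $p$ and $0<r<R<\bar R$ were arbitrary, this is precisely the assertion $\dim_A(X_\infty,C,\bar R)\le\beta$.
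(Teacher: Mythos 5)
Your proposal is correct and follows essentially the same route as the paper's proof: transport the ball $B_{X_\infty}(p,R)$ into $X_j$ via the extension of distances, cover the slightly enlarged ball $B_{X_j}(p',R+2\eps)$ by balls of radius $r-2\eps$ using the uniform Assouad bound, and pull the centers back to $X_\infty$ to get a cover by balls of radius exactly $r$. The only difference is that you make explicit, via a pigeonhole argument on the integer counts $N(\eps_k)$, the final passage from the bound $C\bigl((R+2\eps)/(r-2\eps)\bigr)^\beta$ to the sharp constant $C(R/r)^\beta$, a step the paper handles implicitly by taking $j$ large.
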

\begin{proof}
Take $0 < r < R < \bar R$ and $x \in X_\infty$. For all $j \in \N$, let $d_j$ be a distance on $X_j \sqcup X_\infty$
extending the distances on $X_j$ and $X_\infty$ such that $d(\star_{X_j},\star_{X_\infty}) \le \epsilon_j$,
	$B_{{X_j}}(\star_{X_j}, 1/\epsilon_j) \subseteq B_{{X_\infty} \sqcup   {X_j}} ( {X_\infty}, \epsilon_j)$, and 
	$B_{{X_\infty}}(\star_{X_\infty}, 1/\epsilon_j) \subseteq B_{{X_\infty} \sqcup   {X_j}} (   {X_j}, \epsilon_j)$
 for some sequence $\epsilon_j$  
 going to $0$.

 Now for $j \in \N$ large enough we have $B_{X_\infty}(x,R) \subseteq B_{{X_\infty} \sqcup   {X_j}} ( {X_j}, \epsilon_j)$.
 Take $x_j \in X_j$ with $d_j(x,x_j)< \epsilon_j$. For $j$ large enough we have $R + 2\epsilon_j < \bar R$ and 
 we need
 at most
 $C\left(\frac{R+2\epsilon_j}{r-2\epsilon_j}\right)^\beta$ 
  points $x_{j,i}\in B_{X_j}(x, R+r) \subset B_{{X_j}}(\star_{X_j}, 1/\epsilon_j)$ such that
  the ball $B_{X_j}(x_j,R+2\epsilon_j)$ is covered by the balls $B_{X_j}(x_{j,i},r-2\epsilon_j)$.
 Select points $\tilde x_{j,i} \in X_\infty$ with $d_j(x_{j,i}, \tilde x_{j,i}) < \epsilon_j$.
We claim that $\{B_{X_\infty}(\tilde x_{j,i},r)\}_i$ is a cover for the ball $B_{X_\infty}(x,R)$.

 Indeed, take any $y \in B_{X_\infty}(x,R)$ and $y_j \in X_j$ with $d_j(y,y_j)<\epsilon_j$.
 Now
 \[
  d_j(y_j,x_j) < d_j(y_j,y) + d_j(y,x) + d_j(x,x_j) \le \epsilon_j + R + \epsilon_j = R + 2\epsilon_j,
 \]
 and so $y_j \in B_{X_j}(x_{j,i},r-2\epsilon_j)$ for some $i$. Because 
 \[
  d_j(y,\tilde x_{j,i}) < d_j(y,y_j) + d_j(y_j,x_{j,i}) + d_j(x_{j,i},\tilde x_{j,i}) \le \epsilon_j + r-2\epsilon_j + \epsilon_j = r,
 \]
 we have $y \in B_{X_\infty}(\tilde x_{j,i},r)$.

 Since $\epsilon_j \to 0$ as $j \to \infty$, with large enough $j$ we obtain a cover of $B_{X_\infty}(x,R)$ with no more than
 $C\left( {R}/{r}\right)^\beta$ balls of radius $r$.
\end{proof}

The following consequence can be also found in \cite[Proposition 6.1.5]{Mackay_Tyson}.
\begin{corollary}\label{cor:Assouadlower}
 The Assouad dimension of any tangent space of a metric space $X$ does not exceed the Assouad dimension of $X$.
\end{corollary}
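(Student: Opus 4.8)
The plan is to combine the lower semicontinuity of the quantified Assouad bound from Lemma~\ref{lem_ass_limit} with the scale-invariance of the Assouad dimension under dilation. Fix a point $x \in X$ and a tangent $Y \in {\rm Tan}(X,x)$, so that by Definition~\ref{tangent spaces} there is a diverging sequence $\lambda_j$ with $(X, \lambda_j d_X, x) \to (Y, d_Y, y)$ in the pointed Gromov-Hausdorff topology. It suffices to show $\dim_A Y \le \beta$ for every $\beta > \dim_A X$, since this then forces $\dim_A Y \le \dim_A X$.

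First I would pass from the global definition of the Assouad dimension to its quantified local form. Fixing $\beta > \dim_A X$, there is a constant $C>1$ such that every set of diameter $D$ in $X$ can be covered by at most $C\eps^{-\beta}$ sets of diameter at most $\eps D$, for all $\eps>0$. Rephrasing this covering property in terms of balls changes only the constant, not the exponent, so one obtains some $C'>1$ with $\dim_A(X, C', \bar R) \le \beta$ for every $\bar R>0$. The decisive observation is that this quantified bound is invariant under dilation: since the defining condition involves only the ratio $R/r$ of the radii, scaling the metric by $\lambda_j$ leaves both the exponent $\beta$ and the constant $C'$ unchanged. Hence $\dim_A(\lambda_j X, C', \bar R) \le \beta$ for all $j$ and all $\bar R>0$.

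Now I would apply Lemma~\ref{lem_ass_limit} to the convergent sequence $X_j := (X, \lambda_j d_X, x)$ with limit $X_\infty := Y$. For each fixed $\bar R>0$ the hypotheses of that lemma are met with the single uniform constant $C'$, so it yields $\dim_A(Y, C', \bar R) \le \beta$. Letting $\bar R \to \infty$ gives the covering property of $Y$ at all scales, that is $\dim_A Y \le \beta$. As $\beta > \dim_A X$ was arbitrary, this proves $\dim_A Y \le \dim_A X$.

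The step I expect to require the most care is the first one: carefully matching the global ``sets of diameter $D$'' formulation of the Assouad dimension with the ball-based ``up to scale $\bar R$'' formulation used in Lemma~\ref{lem_ass_limit}, and verifying that a single constant $C'$ works simultaneously for every $\bar R$ and for every dilate $\lambda_j X$. Once this uniformity of constants across scales and across dilates is secured, the invocation of Lemma~\ref{lem_ass_limit} is immediate. One could alternatively appeal to Lemma~\ref{lem_ass_2} to fix the radius once and for all, but this is not needed here, since the bound already holds for every $\bar R>0$ with the same constant.
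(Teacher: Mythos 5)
Your proposal is correct and follows essentially the same route as the paper, which presents Corollary~\ref{cor:Assouadlower} as a direct consequence of Lemma~\ref{lem_ass_limit}: the quantified bound $\dim_A(\lambda_j X, C', \bar R) \le \beta$ is dilation-invariant because only the ratio $R/r$ enters, and the lemma transfers it to the tangent. The only detail you add beyond the paper's implicit argument is the careful translation between the diameter-based and ball-based formulations, which is routine and correctly handled.
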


Regarding the  Nagata dimension, the similar  bound is slightly less trivial and is based on
 Lemma~\ref{union}.

\begin{proposition}
 Suppose that a sequence of pointed metric spaces $X_j$ coverges to a pointed metric space $X_\infty$. 
 Let $0 < s_1 < \infty$ and $0<c<1$. Assume that
 \[
  \dim_N(X_j,c,s) \le n, \qquad \text{for all }0 < s \le s_1 \text{ and all }j = 1,2,\dots.
 \]
 Then, for any $0 < c' < c^2/5$, we have
 \[
  \dim_N\left(X_\infty,c',s\right) \le n, \qquad \text{for all } 0 < s < \left(1+\frac23c\right)s_1.
 \]
\end{proposition}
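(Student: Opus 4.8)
The plan is to mirror the argument of Lemma~\ref{lem_ass_limit}: I would transfer the Nagata covers of the spaces $X_j$ to large balls of $X_\infty$, and then glue these local bounds into a global one by means of Lemma~\ref{union}. The reason for working ball-by-ball is that $X_\infty$ is in general unbounded while the Gromov--Hausdorff approximation of $X_j$ only controls a ball of radius $1/\epsilon_j$; Lemma~\ref{union}, which exhausts the space by balls $B(x_0,r_m)$, is exactly what is needed to pass from local to global. So fix the base point $x_0 := \star_{X_\infty}$ and a diverging sequence of radii $r_m \to \infty$ (for instance $r_m = m$), and fix once and for all an auxiliary constant $\tilde c \in (0,c)$. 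The two tasks are: (a) to establish $\dim_N(B(x_0,r_m),\tilde c,s) \le n$ for every $m$ and every $0 < s \le s_1$; and (b) to let $\tilde c \uparrow c$ at the very end.

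For task (a), fix $m$ and $0 < s \le s_1$. For each $j$ let $d_j$ be an extension of the distances on $X_j \sqcup X_\infty$ realizing the Gromov--Hausdorff approximation with error $\epsilon_j \to 0$, exactly as in the proof of Lemma~\ref{lem_ass_limit}. Starting from an $(s-2\epsilon_j)$-bounded cover $\bigcup_{k=0}^n \mathcal B_k^j$ of $X_j$ with each $\mathcal B_k^j$ being $c(s-2\epsilon_j)$-separated (which exists since $\dim_N(X_j,c,s-2\epsilon_j)\le n$ for $j$ large enough that $\epsilon_j < s/2$), I would push each member $U$ forward to $\tilde U := B_{X_\infty \sqcup X_j}(U,\epsilon_j)\cap X_\infty$. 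For $j$ large enough that also $r_m < 1/\epsilon_j$, the families $\{\tilde U\}$ cover $B(x_0,r_m)$ (every point of the ball lies $\epsilon_j$-close to a point of $X_j$, hence to some $U$), each $\tilde U$ has diameter at most $(s-2\epsilon_j)+2\epsilon_j = s$, and two sets coming from the same $\mathcal B_k^j$ lie at distance at least $c(s-2\epsilon_j)-2\epsilon_j = cs - 2(c+1)\epsilon_j$ apart. Choosing $j$ so large that $2(c+1)\epsilon_j \le (c-\tilde c)s$ makes this separation at least $\tilde c s$, and restricting each $\tilde U$ to the ball only shrinks diameters and enlarges separations; this yields $\dim_N(B(x_0,r_m),\tilde c,s)\le n$.

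With task (a) in hand, Lemma~\ref{union} immediately gives
\[
 \dim_N\Big(X_\infty,\tfrac{\tilde c^2}{5},s\Big)\le n,\qquad \text{for all } 0 < s \le \big(1+\tfrac23\tilde c\big)s_1.
\]
Finally, given any target $c' < c^2/5$ and any scale $s < (1+\tfrac23 c)s_1$, I would choose $\tilde c < c$ close enough to $c$ that simultaneously $\tilde c^2/5 \ge c'$ and $(1+\tfrac23\tilde c)s_1 \ge s$; both are achievable because the two target inequalities are strict and $\tilde c^2/5 \to c^2/5$, $(1+\tfrac23\tilde c)s_1 \to (1+\tfrac23 c)s_1$ as $\tilde c \uparrow c$. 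Since a $\tfrac{\tilde c^2}{5}s$-separated family is a fortiori $c's$-separated, the bound $\dim_N(X_\infty,\tilde c^2/5,s)\le n$ upgrades to $\dim_N(X_\infty,c',s)\le n$, which is the claim.

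I expect the transfer in task (a) to be the main obstacle, but the difficulty is purely one of bookkeeping: tracking the $\epsilon_j$-errors in diameters and separations, and noting that since Nagata dimension at a fixed scale is a per-scale existence statement, the fact that the required approximating index $j$ depends on $s$ (smaller scales demanding larger $j$) is harmless. Once this is set up, the gluing via Lemma~\ref{union} and the limit $\tilde c \uparrow c$ are routine.
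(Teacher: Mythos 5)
Your proposal is correct and follows essentially the same route as the paper's own proof: transfer the covers of $X_j$ to $\epsilon_j$-neighborhoods intersected with large balls of $X_\infty$ (with an auxiliary constant $\tilde c<c$ absorbing the $\epsilon_j$-losses in separation), glue via Lemma~\ref{union}, and let $\tilde c\uparrow c$ at the end. The bookkeeping of diameters ($\le s$) and separations ($\ge cs-2(1+c)\epsilon_j\ge\tilde cs$) matches the paper's computation exactly.
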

\begin{proof}
 Our aim is to show that for all $0 < c'' < c$ we have
 \begin{equation}\label{eq:nagaconvaim}
  \dim_N(B_{X_\infty}(\star_{X_\infty},k),c'',s) \le n, \qquad \text{for all }0 < s < s_1 \text{ and all }k = 1,2,\dots.
 \end{equation}
 Once we have obtained this, the claim will follow from Lemma~\ref{union}.

 Fix an integer $k > 0$ and $0 < c'' < c$. 
 For all $j \in \N$, let $d_j$ be a distance on $X_j \sqcup X_\infty$
 extending the distances on $X_j$ and $X_\infty$ such that $d(\star_{X_j},\star_{X_\infty}) \le \epsilon_j$,
 $B_{{X_j}}(\star_{X_j}, 1/\epsilon_j) \subseteq B_{{X_\infty} \sqcup   {X_j}} ( {X_\infty}, \epsilon_j)$, and 
 $B_{{X_\infty}}(\star_{X_\infty}, 1/\epsilon_j) \subseteq B_{{X_\infty} \sqcup   {X_j}} (   {X_j}, \epsilon_j)$
 for some sequence $\epsilon_j$ going to $0$. 

 Take $0 < s < s_1$ and $j \in \N$ so that $\epsilon_j < \frac1k$ and
 \begin{equation}\label{eq:ccprimeprime}
  cs-2(1+c)\epsilon_j  \ge c''s.
 \end{equation}
 Let $\bigcup_{i=0}^n \mathcal{B}_i$ be an $(s-2\epsilon_j)$-bounded cover
 of $X_j$ where each  $\mathcal{B}_i$ is $c(s-2\epsilon_j)$-separated. Define the new collections of sets $\mathcal{B}_i'$
 as
 \[
  \mathcal{B}_i' := \left\{  B_{X_\infty}(\star_{X_\infty},k)
  \cap B_{{X_\infty} \sqcup   {X_j}} (   {B}, \epsilon_j)
\right\}_{B \in \mathcal{B}_i}.
 \]
 Now, for each $x \in B_{X_\infty}(\star_{X_\infty},k)$, there exists $x_j \in X$ with $d_j(x,x_j)<\epsilon_j$.
 Because $x_j \in B$ for some $i = 0,\dots,n$ and $B \in \mathcal{B}_i$, we have $x \in B'$ for some 
 $i = 0,\dots,n$ and $B' \in \mathcal{B}_i'$. Therefore $\bigcup_{i=0}^n\mathcal{B}_i'$ is an $s$-bounded
 cover of $B_{X_\infty}(\star_{X_\infty},k)$ where each   $\mathcal{B}_i'$ is
 $c''s$-separated, by \eqref{eq:ccprimeprime}.
\end{proof}

\begin{corollary}\label{cor:Nagatalower}
 The Nagata dimension of any tangent space of a metric space $X$ does not exceed the Nagata dimension of $X$.
\end{corollary}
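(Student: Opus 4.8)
The plan is to deduce this from the preceding Proposition together with the scale invariance of the Nagata dimension. If $\dim_N X = \infty$ there is nothing to prove, so assume $\dim_N X = n < \infty$. By the equivalent formulation \eqref{Nag Dim} there is a constant $c>0$, which we may take in $(0,1)$ by shrinking it (a $c_0 s$-separated family is also $cs$-separated for every $c\le c_0$), such that $\dim_N(X,c,s)\le n$ for every $s>0$.

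First I would record how the quantity $\dim_N(\cdot,c,s)$ transforms under dilation. If $\bigcup_{k=0}^n\mathcal{B}_k$ is an $s$-bounded cover of $X$ with each $\mathcal{B}_k$ being $cs$-separated, then viewing the same families inside $\lambda X=(X,\lambda d_X)$ multiplies all diameters and all separations by $\lambda$; hence it is a $(\lambda s)$-bounded cover with each family $c(\lambda s)$-separated. Therefore $\dim_N(\lambda X,c,\lambda s)\le n$, and since $s>0$ was arbitrary this yields $\dim_N(\lambda X,c,t)\le n$ for every $t>0$ and every $\lambda>0$. In words, the Nagata dimension at every scale, with the fixed constant $c$, is unchanged by rescaling the metric.

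Now fix $x\in X$ and $Y\in{\rm Tan}(X,x)$. By Definition \ref{tangent spaces} there is a diverging sequence $\lambda_j\to\infty$ with $X_j:=(\lambda_j X,x)\to Y$ in the pointed Gromov--Hausdorff topology. By the previous paragraph $\dim_N(X_j,c,s)\le n$ for all $s>0$ and all $j$; in particular the hypothesis of the preceding Proposition (which rests on Lemma~\ref{union}) is met for any chosen $s_1>0$. Applying that Proposition, for any fixed $0<c'<c^2/5$ we obtain
\[
 \dim_N\!\left(Y,c',s\right)\le n,\qquad \text{for all } 0<s<\left(1+\tfrac23 c\right)s_1.
\]

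The observation that finishes the argument is that the output constant $c'$ does not depend on $s_1$. Thus I would let $s_1\to\infty$: the intervals $\bigl(0,(1+\tfrac23 c)s_1\bigr)$ exhaust $(0,\infty)$ while $c'$ stays fixed, so $\dim_N(Y,c',s)\le n$ holds for every $s>0$. By Definition \ref{def:Nag_scale} this means $\dim_N Y\le n=\dim_N X$, as desired. I do not expect a genuine obstacle here: the whole content is carried by the preceding Proposition, and the only points requiring care are the elementary scale invariance of the second step and the independence of $c'$ from $s_1$, which is exactly what legitimizes the single limiting step $s_1\to\infty$.
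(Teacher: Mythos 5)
Your proposal is correct and is exactly the argument the paper intends: the corollary is stated as an immediate consequence of the preceding Proposition, applied to the dilated sequence $(\lambda_j X,x)\to Y$ after observing the scale invariance $\dim_N(\lambda X,c,\lambda s)\le n$ and letting $s_1\to\infty$ using that $c'$ is independent of $s_1$.
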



As shown by the next example, in general one cannot hope to deduce
an upper bound for the   dimension of the space by simply looking at the   dimensions of the tangents.

\begin{example}\label{ex:nonuniformtangent}
 Define a set $X \subset \R$ as
 \[
  X = \{0\} \cup \bigcup_{i = 1}^\infty \bigcup_{k = 1}^i\{2^{-i^2}+k2^{-i^3}\}.
 \]
 Let the distance be induced by the Euclidean distance on $\R$.
 Then $X$ is compact, $\dim_A X =\dim_NX = 1$ and 
 ${\rm Tan}(X,x) \subset \{\{0,t\} \subset\R\,:\, t \in [0,\infty)\}$ for all $x \in X$.
 Hence, $\dim_A Y =\dim_NY = 0$, for all $Y  \in {\rm Tan}(X,x)$.
 In particular,
 \begin{equation}\label{eq:nonequaltangents}
   \max\{\dim Y\,:\,Y \in {\rm Tan}(X,x), x \in X\} < \dim X,
 \end{equation}
 where $\dim$ is either the Nagata or the Assouad dimension.
\end{example}

Notice that the space in Example \ref{ex:nonuniformtangent} has multiple tangents at $0$, the convergence
to the tangents is not uniform, and that in the definition of a tangent space we keep the base point fixed. 
Hence, if we want to obtain the 
Nagata dimension of the space as the maximum of the dimensions of the tangents, we need to
either change the notion of tangents (e.g., consider weak tangents) or   impose more restrictions on the convergence to the tangents.
Regarding this last option, we
 will study the
case of uniformly close tangent and, in the subsequent Section \ref{sec:equiSR}, we will apply our results to subRiemannian manifolds.
 
Observe also that in Example \ref{ex:nonuniformtangent}, if we take a larger class of tangents where we allow the change
of the the base point, we obtain equality in \eqref{eq:nonequaltangents}.
Simply let $r_i = i2^{-i^3}$ and $x_i = 2^{-i^2}$. This observation is true    in compact doubling metric spaces when
$\dim_NX = 1$.
This is the content of the following proposition, where we consider {\em weak tangents}, i.e.,  limits of the forms $(\frac{1}{r_i} X, x_i)$.

\begin{proposition}\label{prop:weaktangentnagata}
 Let $X$ be a doubling metric space with $\dim_{LC}X \ge 1$. Then there exists a sequence of points $x_i \in X$ 
 and a sequence $r_i \searrow 0$ such that the sequence $(\frac{1}{r_i} X, x_i)$ converges to a space with linearly controlled dimension at least $1$.
\end{proposition}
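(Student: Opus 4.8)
The plan is to reduce the statement to a combinatorial criterion for $\dim_{LC}\ge 1$ phrased in terms of chains, and then to transport long, fine chains from $X$ into a weak tangent.

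First I would record an elementary reformulation of the zero-dimensional condition. Call two points $p,q$ \emph{$\sigma$-chained} if there is a finite sequence $p=w_0,\dots,w_m=q$ with $d(w_j,w_{j+1})<\sigma$. Then $\dim_N(Z,c,s)\le 0$, i.e.\ the existence of an $s$-bounded $cs$-separated cover of $Z$, holds exactly when every $cs$-chain component of $Z$ has diameter at most $s$: a $cs$-separated cover must contain each chain inside a single member, while conversely the chain components themselves form an $s$-bounded $cs$-separated cover once they are small. Hence $\dim_N(Z,c,s)\ge 1$ is equivalent to the existence of a $cs$-chain joining two points at distance greater than $s$. From this I extract the target criterion: if there is a fixed $\delta_0>0$ such that for \emph{every} $\epsilon>0$ the space $Z$ contains an $\epsilon$-chain joining two points at distance $\ge\delta_0$, then $\dim_{LC}Z\ge 1$ (apply the criterion at any scale $s<\min\{s_0,\delta_0\}$ with the choice $\epsilon=cs$).

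Next I would build the points and radii. Unwinding $\dim_{LC}X\ge 1$, the negation of $\dim_{LC}X\le 0$, gives: for every $c>0$ and every $s_0>0$ there is a scale $s<s_0$ with $\dim_N(X,c,s)\ge 1$. Applying this with $c=c_i:=1/i$ and $s_0=1/i$ produces a scale $s_i<1/i$ and a $(c_is_i)$-chain joining two points at distance $>s_i$. Letting $c_i\to0$ is the crucial point: after rescaling by $1/s_i$ the chain has step size $<c_i=1/i\to 0$ yet still spans distance $>1$, so it becomes arbitrarily fine across a gap of fixed size. I would take $x_i:=a_i$, the initial vertex of the chain, and $r_i:=s_i$. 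To keep the configuration in a bounded region, I truncate the chain at the first vertex where the distance from $a_i$ exceeds $s_i/2$; since each step is at most $c_is_i\le s_i$, the truncated chain stays inside $B(a_i,\tfrac32 s_i)$ while still joining $a_i$ to a point at distance in $(\tfrac12 s_i,\tfrac32 s_i]$. Thus in $\tfrac1{r_i}X=(X,\tfrac1{s_i}d,a_i)$ there is a $(1/i)$-chain contained in $B(\star,\tfrac32)$ running from $\star$ to a point at distance $>\tfrac12$.

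Finally I would pass to the limit. The spaces $\tfrac1{r_i}X$ are uniformly doubling, since rescaling preserves the doubling constant, so by Gromov precompactness for pointed doubling spaces a subsequence converges, in the pointed Gromov--Hausdorff sense for the distance ${\rm Dist}_{GH}$ fixed in Section~\ref{sec:GH}, to a complete doubling space $Z$; passing to a further subsequence I may assume $r_i\searrow 0$. To transport the chains, given $\epsilon>0$ I choose $i$ large so that $1/i$ and ${\rm Dist}_{GH}((\tfrac1{r_i}X,x_i),Z)$ are small, and use an extension of the distances on $(\tfrac1{r_i}X)\sqcup Z$ witnessing the Gromov--Hausdorff measurement to match each chain vertex (all lying in $B(\star,\tfrac32)$) to a point of $Z$ within error $\epsilon_i\to 0$. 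The matched points form a $(1/i+2\epsilon_i)$-chain in $Z$ whose endpoints are at distance $>\tfrac12-2\epsilon_i$, which for large $i$ is an $\epsilon$-chain joining two points at distance $>\tfrac14$. Hence $Z$ satisfies the criterion with $\delta_0=\tfrac14$, so $\dim_{LC}Z\ge 1$. The main obstacle, and the reason one must work with the linearly controlled dimension rather than the plain Nagata dimension, is to force the limit to be one-dimensional at \emph{all} small scales rather than merely discrete, since a single long chain of unit spacing such as $\N$ has $\dim_{LC}=0$; this is exactly what the choice $c_i\to 0$ overcomes, by making the rescaled chains infinitely fine across a gap of fixed size. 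The only remaining technical care is the Gromov--Hausdorff transport, which is harmless precisely because the truncation confines all chain vertices to a ball of fixed radius about the basepoint.
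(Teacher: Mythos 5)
Your proposal is correct and follows essentially the same route as the paper: the iterated neighborhoods $N_X^\infty(x,\delta)$ used there are exactly your $\delta$-chain components, the choice of points and radii comes from the same negation of $\dim_{LC}X\le 0$ with $c_i=1/i\to 0$, and the limit is handled by the same Gromov compactness plus transport of fine chains across a fixed gap. The only cosmetic difference is your explicit truncation of the chain to a bounded ball, which the paper achieves by intersecting with $B(x_i,2r_i)$.
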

\begin{proof}
For each $x \in X$ and $\delta > 0$, define the iterated $\delta$-neighborhoods of $x$ by setting $N_X^0(x,\delta) := \{x\}$ and
\[
 N_X^n(x,\delta) := B(N_X^{n-1}(x,\delta), \delta), \qquad \text{for }n \ge 1.
\]
Define also $N_X^\infty(x,\delta) := \bigcup_{i=1}^\infty N_X^n(x,\delta)$.
Take $i \in \N$. Since $\dim_{LC}X \ge 1$, we claim that there exist a point $x_i \in X$ and a radius $r_i < \frac1i$ such that
$\diam(N_X^\infty(x_i,r_i/i)) > r_i$. Indeed, suppose that this is not true. Then for every $r < \frac1i$
the collection $\{N_X^\infty(x,r/i)\,:\, x \in X\}$ would be an $r$-bounded $\frac{r}{2i}$-separated cover of $X$.
This would mean that $\dim_{LC}X = 0$.

By Gromov Theorem, see \cite[Theorem 8.1.10]{Burago:book},
because of the doubling assumption the sequence $(r_i^{-1} X, x_i)$ has a subsequence converging to a pointed metric space $(Z,z)$.
Now, for any $0 < \epsilon < \frac12$, there is $i > \frac1\epsilon$ such that 
\[
{\rm Dist}_{GH}\left((r_i^{-1} X, x_i),(Z,z)\right) < \epsilon.
\]
Consequently, there exists a distance $d$ extending the distances on $r_i^{-1}X$ and $Z$ such that
$d(x_i,z)\leq \epsilon$ and
$B_{r_i^{-1}X}(x_i, 1/\epsilon) \subseteq B_{Z \sqcup   r_i^{-1}X} (Z, \epsilon)$.
Now, for each 
\[
 x' \in N_{r_i^{-1}X}^\infty(x_i,1/i) \cap B_{r_i^{-1}X}(x_i,2) = N_X^\infty(x_i,r_i/i)\cap B_X(x_i,2r_i) ,
\]
there exists $z' \in Z$ with $d(x',z')<\epsilon$. Thus $\diam(N_Z^\infty(z,3\epsilon)) \ge 1 - 2\epsilon$.
Letting $\epsilon \searrow 0$ shows that $\dim_{LC}Z \ge 1$.
\end{proof}

We do not know if Proposition \ref{prop:weaktangentnagata} is true  with higher lower bound on the dimension.
We  were unable to  find a way to generalize our argument to the higher dimensional case.
 
\begin{question}\label{q:movingblowup}
 Let $X$ be a metric space. Does there exist a sequence of points $x_i \in X$ and a sequence $r_i \searrow 0$
 such that the weak tangent of $X$ along the sequences $x_i$, $r_i$ has exactly the same linearly controlled dimension as $X$?
\end{question}

\section{Dimension of uniformly close tangents}\label{sec:uniform}
As we saw in Example \ref{ex:nonuniformtangent}, both the Nagata dimension and the Assouad dimension of a space can in general be strictly larger than the supremum of the dimensions of its tangents.
We wonder when such a supremum equals the dimension of the space.
A key assumption that we will make, which will still not be enough,  is that the convergence to the tangents is uniform, as we now explain.
Recall that ${\rm Dist}_{GH}$ is the distance defined in Section \ref{sec:GH}.

 
 Let $X$ be a metric space. We say that the metric space $X$ has {\em uniformly close tangents in} $K\subset X$ if we have
$$\lim_{\lambda\to \infty} {\rm Dist}_{GH}( (\lambda X,x), {\rm Tan}(X,x)) =0 $$ uniformly in $x \in K$.
If $X$ has uniformly close tangents in $X$, we simply say that $X$ has uniformly close tangents.
Before investigating what other assumptions we need to make, we list some basic properties of uniformly close tangents.
 
 \begin{lemma}\label{lma:continuoustangents} Assume that a metric space $X$ has uniformly close tangents.
  If, for all $x\in X$, there exists only one element $T_x\in  {\rm Tan}(X,x)$,
then the function
$x\mapsto T_x$ is continuous. 
 \end{lemma}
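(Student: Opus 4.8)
The plan is to prove continuity directly from the definition, using the triangle inequality for ${\rm Dist}_{GH}$ to interpolate between the two tangents through dilated copies of $X$ taken at one common, sufficiently large scale. Fix a point $x_0 \in X$ and $\eps > 0$; I want to produce $\delta > 0$ so that $d(x,x_0) < \delta$ forces ${\rm Dist}_{GH}(T_x, T_{x_0}) < \eps$. Since each ${\rm Tan}(X,x)$ is the singleton $\{T_x\}$, the hypothesis of uniform closeness reads: for every $\eps>0$ there is $\lambda_\eps$ with ${\rm Dist}_{GH}((\lambda X, y), T_y) < \eps$ for all $y \in X$ and all $\lambda > \lambda_\eps$, the crucial point being that the threshold $\lambda_\eps$ is independent of the base point $y$.

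First I would fix once and for all a single scale $\lambda > \lambda_{\eps/3}$. This one scale then controls the ``vertical'' distances from the dilated spaces to the tangents at both $x$ and $x_0$ simultaneously, namely ${\rm Dist}_{GH}(T_x,(\lambda X, x)) < \eps/3$ and ${\rm Dist}_{GH}((\lambda X, x_0),T_{x_0}) < \eps/3$. Next I would control the ``horizontal'' distance between the two dilated spaces at this fixed scale by applying Lemma~\ref{lemma_moving}(ii) to the metric space $\lambda X$, whose distance is $\lambda d$:
\[
{\rm Dist}_{GH}((\lambda X, x),(\lambda X, x_0)) \le \lambda\, d(x,x_0).
\]
Setting $\delta := \eps/(3\lambda)$ makes this term smaller than $\eps/3$ as soon as $d(x,x_0) < \delta$. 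Combining the three estimates through the triangle inequality for ${\rm Dist}_{GH}$ (valid by Lemma~\ref{lma:distghprop}(ii), since ${\rm Dist}_{GH} = \min\{1/2, \widetilde{\rm Dist}_{GH}\}$) gives
\[
{\rm Dist}_{GH}(T_x, T_{x_0}) \le \tfrac{\eps}{3} + \lambda\, d(x,x_0) + \tfrac{\eps}{3} < \eps,
\]
which is exactly continuity at $x_0$.

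The argument is short, and the only genuine subtlety is the order of quantifiers, which is precisely where the uniformity hypothesis does its work: one must be able to fix a single large scale $\lambda$ valid for all base points \emph{before} choosing $\delta$. Without uniform closeness the scale needed to get $\eps/3$-close to the tangent could blow up as $x \to x_0$, and the horizontal error $\lambda\, d(x,x_0)$ would then be uncontrollable; uniformity is exactly what decouples the choice of $\lambda$ from the choice of $\delta$. I do not expect any obstacle beyond bookkeeping the constants.
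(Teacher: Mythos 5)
Your proof is correct and is exactly the argument the paper has in mind: the paper omits the details, remarking only that the lemma "is proved exactly as one proves that the uniform limit of continuous maps is continuous, via Lemma~\ref{lemma_moving}(ii)," which is precisely your $\eps/3$ interpolation through $(\lambda X,x)$ and $(\lambda X,x_0)$ at a single uniform scale. No discrepancies to report.
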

We omit the easy proof of the above fact, which is proved exactly as one proves that the uniform limit of continuous maps is continuous, via Lemma~\ref{lemma_moving}(ii).
 In general, without the uniqueness assumption on the tangents, one cannot even conclude that the
 ``graph''  $\{(x,T) :x\in X, T\in {\rm Tan}(X,x)\}$ is closed.
Look for example at Example \ref{ex:nagata}.  

From \cite{LeDonne6} we know that if a complete metric space admits a doubling measure $\mu$ and
if the space has unique tangents, then at $\mu$-almost every point the unique tangent is an isometrically homogeneous
space admitting dilations.
We have the following version of this result.


\begin{proposition}
  Let $X$ be a complete doubling metric space with uniformly close tangents such that,
 for all $x \in X$, there exists only one element $T_x\in  {\rm Tan}(X,x)$.
 Then, for all $x \in X$, the tangent $T_x$ is an isometrically homogeneous space admitting dilations.
\end{proposition}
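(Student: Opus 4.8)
The plan is to establish the two asserted properties separately. The property of \emph{admitting dilations} will be a direct consequence of the uniqueness of the tangent, while \emph{isometric homogeneity} is where the hypothesis of uniformly close tangents enters essentially. Throughout I fix an arbitrary $x \in X$ and write $T_x$ for its unique tangent, with base point $\star$. Since $X$ is complete and doubling it is proper, so by Gromov's compactness theorem the dilated spaces $(\lambda X, x)$ form a precompact family; as ${\rm Tan}(X,x) = \{T_x\}$ is a singleton, every subsequential limit agrees with $T_x$, and hence $(\lambda X, x) \to T_x$ as $\lambda \to \infty$ (full convergence, not merely along a subsequence).

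For dilations, fix $\mu > 0$. Since dilating the metric by a fixed factor is continuous for ${\rm Dist}_{GH}$ (Lemma~\ref{lemma_moving}) and $\mu(\lambda X) = (\mu\lambda)X$, we obtain $\mu T_x = \mu\lim_{\lambda\to\infty}(\lambda X, x) = \lim_{\lambda\to\infty}\big((\mu\lambda) X, x\big) = T_x$, the last equality using again that $\mu\lambda \to \infty$ together with the uniqueness of the tangent. Thus there is a base-point-preserving isometry between $T_x$ and $\mu T_x$; read as a self-map of the underlying set of $T_x$, it scales all distances by the factor $1/\mu$ and fixes $\star$. As $\mu$ ranges over $(0,\infty)$, so does $1/\mu$, and this produces a dilation of $T_x$ of every factor.

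For homogeneity I must produce, for each $p \in T_x$, a self-isometry of $T_x$ sending $\star$ to $p$. Using the convergence $(\lambda X, x) \to (T_x, \star)$, I first choose points $x_\lambda \in X$ whose images in $\lambda X$ converge to $p$; this is possible because $p$ lies at finite distance from $\star$, and it forces $\lambda\, d(x, x_\lambda) \to d_{T_x}(\star, p)$, in particular $x_\lambda \to x$ in $X$. Re-basing the \emph{same} spaces $\lambda X$ at $x_\lambda$ rather than at $x$ then gives $(\lambda X, x_\lambda) \to (T_x, p)$, by the standard stability of pointed Gromov--Hausdorff limits under a convergent change of base point. On the other hand, the assumption of uniformly close tangents applied at the moving points $x_\lambda$ yields ${\rm Dist}_{GH}\big((\lambda X, x_\lambda), T_{x_\lambda}\big) \to 0$, while the continuity of $k \mapsto T_k$ (Lemma~\ref{lma:continuoustangents}, valid here since the tangents are unique and uniformly close) together with $x_\lambda \to x$ gives ${\rm Dist}_{GH}(T_{x_\lambda}, T_x) \to 0$. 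Combining these through the triangle inequality for ${\rm Dist}_{GH}$ (Lemma~\ref{lma:distghprop}) yields $(\lambda X, x_\lambda) \to (T_x, \star)$ as well. The same sequence thus converges to both $(T_x, p)$ and $(T_x, \star)$; by uniqueness of pointed Gromov--Hausdorff limits these pointed spaces are isometric, which is exactly an isometry of $T_x$ carrying $\star$ to $p$.

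The main obstacle is the interplay between the two base-point arguments in the last paragraph: the uniformity of the convergence to tangents is indispensable precisely because the auxiliary base point $x_\lambda$ drifts with $\lambda$, so a merely pointwise convergence $(\lambda X, k) \to T_k$ would give no control over $(\lambda X, x_\lambda)$. Care is also needed to justify the re-basing claim $(\lambda X, x_\lambda) \to (T_x, p)$ quantitatively, tracking the error terms $\epsilon_\lambda \to 0$ coming from the metric extensions realizing ${\rm Dist}_{GH}$, exactly as in the proof of Lemma~\ref{lem_ass_limit} and of the lower-bound proposition for the Nagata dimension. This argument mirrors the almost-everywhere result of \cite{LeDonne6}, with the hypothesis of uniformly close tangents playing the role of the Lebesgue-point type continuity of the tangent field used there, thereby upgrading the conclusion from almost every point to every point.
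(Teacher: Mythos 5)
Your proof is correct, but for the homogeneity part it takes a genuinely different route from the paper. The paper's own proof is indirect: it first invokes the existence of a doubling measure $\mu$ on the complete doubling space $X$, then cites \cite{LeDonne6} to get that $T_x$ is isometrically homogeneous at $\mu$-almost every $x$, and finally upgrades this to every $x$ by combining the continuity of $x\mapsto T_x$ (Lemma~\ref{lma:continuoustangents}) with the stability of isometric homogeneity under pointed Gromov--Hausdorff convergence; the dilation statement is simply asserted to follow from uniqueness of the tangent, exactly as in your first step. Your argument instead proves homogeneity directly at each fixed $x$: you realize a given $p\in T_x$ by drifting base points $x_\lambda\to x$, show that $(\lambda X,x_\lambda)$ converges both to $(T_x,p)$ (by re-basing) and to $(T_x,\star)$ (by uniform closeness applied at the moving points together with Lemma~\ref{lma:continuoustangents} and the triangle inequality of Lemma~\ref{lma:distghprop}), and conclude by uniqueness of pointed limits. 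This is self-contained, avoids the doubling measure and the external theorem of \cite{LeDonne6}, and isolates exactly where uniformity of the convergence is used; the price is that you must carry out the quantitative re-basing estimates in the metric extensions realizing ${\rm Dist}_{GH}$ and you implicitly use properness of $T_x$ (guaranteed here by completeness and doubling) for the uniqueness of pointed limits --- both points you flag but do not fully write out, which is consistent with the paper's own level of detail. The paper's route is shorter on the page but hides comparable work inside the cited almost-everywhere result.
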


\begin{proof}
 Since $X$ is a complete doubling metric space there exists a doubling measure $\mu$ on $X$ 
 (see for instance \cite{VolbergKonyagin}, \cite{LuukkainenSaksman}, or \cite{KaenmakiRajalaSuomala2012}).
 By \cite[Theorem 1.4]{LeDonne6} we already know that at $\mu$-almost every $x \in X$
 the tangent $T_x$ is an isometrically homogeneous space.
 By Lemma~\ref{lma:continuoustangents} we know that all the tangents $T_x$ are Gromov-Hausdorff limits of
 isometrically homogeneous spaces. Since being isometrically homogeneous is stable under Gromov-Hausdorff convergence,
 all the tangents are isometrically homogeneous. The fact that the tangents admit dilations follows
 directly from the fact that ${\rm Tan}(X,x)$ are singletons.
\end{proof}

\subsection{Nagata dimension and uniformly close tangents}\label{Sec:Nagata_uniform_tangents}

It turns out that having uniformly close tangents is not enough to tie the Nagata dimension of the space to the Nagata dimension
of its tangents. This is shown by the next example.

\begin{example}\label{ex:nagata}
 Let us define a metric space $X \subset \R^2$. Define for all $n \in \N$ a basic construction piece $S_n$, in polar coordinates as
 \[
  S_n = \left\{(1,2\pi k/n) \,:\,k= 1, \dots, n\right\}.
 \]
 The set $S_n$ consists of $n$ equally distributed points on the unit circle in $\R^2$.

 Using the sets $S_n$ define for all $n \in \N$ the set
 \[
  E_n = \left\{\sum_{i=n}^\infty 2^{-i^2}A_i \,:\, A_i = S_n \text{ if } i \text{ odd, and }A_i = \{(0,0),(1,0)\}\text{ if }i\text{ even }\right\}
 \]
 and from these finally the space
 \[
  X = \cl\left({\bigcup_{n=1}^\infty E_n + \left(2^{-n^2},0\right)}\right).
 \]
 The construction of $X$ is illustrated in Figure \ref{fig:example3}.

\begin{figure}\label{fig:example3}
  \centering
  \includegraphics{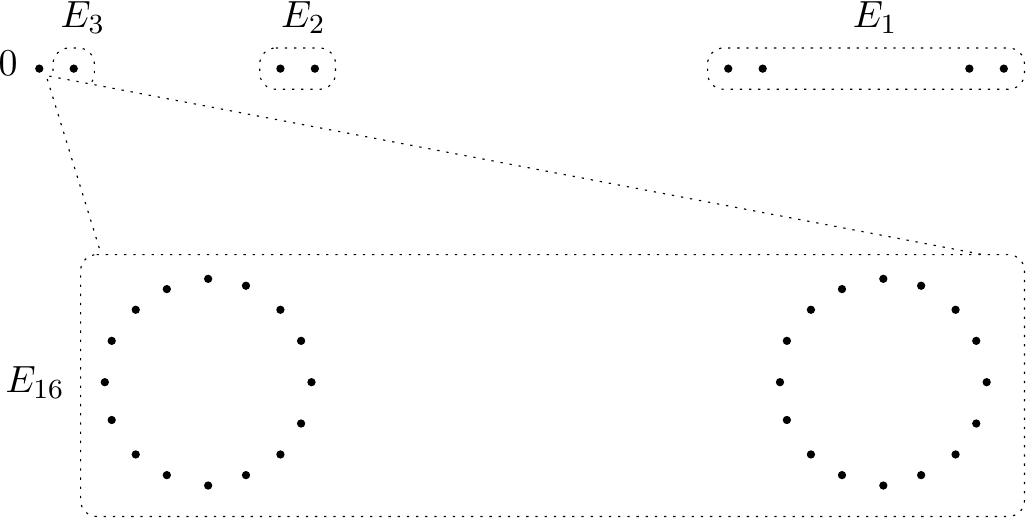}
  \caption{
    Illustration of the space $X$ in Example \ref{ex:nagata}. It consists of an infinite number of construction pieces $E_i$,
    each of which is constructed by alternating in taking $i$ points on a sphere or just two points. The magnified illustration of the construction piece $E_{16}$ shows this idea.
}
\end{figure}
	
 As tangents for every $x \in \cl{(E_n + 2^{-n^2})}$ we have
 \[
  {\rm Tan}(X,x) = \{tS_n\,:\, t\in (0,\infty)\} \cup \left\{\{0,t\} \,:\, t \in [0,\infty)\right\},
 \]
 and for the origin
\[
  {\rm Tan}(X,(0,0)) = \left\{\{0,t\} \,:\, t \in [0,\infty)\right\},
\]
 We claim that the space $X$ (with the distance induced by the Euclidean distance on $\R^2$)
 has uniformly close tangents. To see this, take  $\lambda>0$ and $x \in X$. Let $i \in \N$
 be such that $2^{i^2-i} < \lambda \le 2^{(i+1)^2-i-1}$. Then $\diam(\lambda 2^{-j^2}A_j) \le 2^{-i}$ for all $j \ge i+1$,
 so down to scale $2^{-i}$ we can consider the sets $2^{-j^2}A_j$, $j \ge i+1$, to be just points in the $\lambda$-dilated distance.
 On the other hand, since any two distinct points in $A_{i-1}$ have distance at least $\frac1{i-1}$ between them
 and $\lambda \frac{1}{i-1}2^{-(i-1)^2} \ge \frac{1}{i-1}2^{i-1}$, there is at most one point of $2^{-(i-1)^2}A_{i-1}$ inside
 a ball of radius $\frac1{1-i}2^{i-2}$ in the $\lambda$ dilated distance. In particular,
 \[
  {\rm Dist}_{GH}((\lambda X,x), {\rm Tan}(X,x)) \le (1-i)2^{2-i}
 \]
for all $x \in X$. Hence $X$ has uniformly close tangents.

The Nagata dimension of any of the tangents of $X$ is zero. However,  the  Nagata dimension and the linearly controlled dimension of the space are one.
\end{example}

In Example \ref{ex:nagata} the tangents of the space have Nagata dimension zero with smaller and smaller
constant $c$ as we move the base point towards $(0,0)$. On the other hand, the set of tangents is
more than a singleton at every point in the space.
In the following Theorem~\ref{thm:uniformtangents} we show that if we rule out one of the above mentioned properties,
namely if we require either  uniformity of the constant $c$ or the uniqueness of tangents, the 
linearly controlled dimension of the space
is bounded above by the supremum of the dimensions of its tangents.

\begin{theorem}\label{thm:uniformtangents}
Let $X$ be a metric space.
 Let $K\subset X$ be a subset on which the convergence to tangents is uniform.
Assume that $\dim_{LC} K < \infty$ and that one of the  following two situation  holds:
\begin{enumerate}
 \item[(i)]  $\dim_{LC}(B_{Y}(\star,1)) \le n$, for all $x \in K$ and $Y \in {\rm Tan}(X,x)$, with uniform constants $c$ and $s$, 
 or
 \item[(ii)] $K$ is compact, 
 for every $x \in K$, the tangent is unique, i.e., the set ${\rm Tan}(X,x)$ is a singleton denoted by $T_xX$,
       and $\dim_{LC}(B_{T_xX}(\star,1)) \le n$. 
\end{enumerate}
 Then $\dim_{LC} K \le n$.
\end{theorem}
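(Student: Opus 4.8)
The plan is to produce, for every sufficiently small scale $s$, a cover of $K$ that is $O(s)$-bounded and splits into $n+1$ families each of which is $O(s)$-separated, with all implied constants independent of $s$; by Definition~\ref{lin con dim} this is exactly $\dim_{LC}K\le n$. The whole argument is a transfer of Nagata covers from the tangents back to $X$, organized by a coarse cover that the finiteness of $\dim_{LC}K$ supplies.

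First I would reduce both cases to a single uniform hypothesis: there are constants $\bar c>0$ and $\bar s_0>0$ so that, for each $x\in K$ and each tangent $Y$ furnished by the uniform-convergence condition at $x$ (any $Y\in{\rm Tan}(X,x)$ in case $(i)$, the unique $T_xX$ in case $(ii)$), one has $\dim_N(B_Y(\star,1),\bar c,\sigma)\le n$ for all $0<\sigma<\bar s_0$. In case $(i)$ this is precisely the assumption. In case $(ii)$ I would use that $K$ is compact and that, by Lemma~\ref{lma:continuoustangents}, the map $x\mapsto T_xX$ is continuous; covering the image of this map by finitely many Gromov--Hausdorff balls and pulling a fixed good cover across each small GH-distance (the same transfer of covers used in the proof of Lemma~\ref{lem_ass_limit} and in the proposition preceding Corollary~\ref{cor:Nagatalower}) then yields uniform $\bar c,\bar s_0$. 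I would also shrink $\bar s_0$ so that $\bar s_0<c_0$, where $c_0$ is the coarse constant appearing below.

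Next, fix a small target scale $s$, a small fixed $\sigma_\ast\in(0,\bar s_0)$, and set the mesoscale $\rho:=s/\sigma_\ast$, so that $\rho\gg s$ and $\sigma:=s/\rho=\sigma_\ast<\bar s_0$; for $s$ small, $1/\rho$ also exceeds the $\lambda_\epsilon$ from uniform convergence for a suitably small fixed $\epsilon$. Since $\dim_{LC}K\le D<\infty$ for some integer $D$, at scale $\rho$ there is a $\rho$-bounded cover $\mathcal U=\bigcup_{\ell=0}^{D}\mathcal U_\ell$ of $K$ with each $\mathcal U_\ell$ being $c_0\rho$-separated. For each coarse piece $U$ pick $z_U\in U$, so that $U\subseteq B(z_U,\rho)$; by uniform convergence $(\tfrac1\rho X,z_U)$ is $\epsilon$-close to a tangent $Y$ obeying the uniform bound, so the $\sigma$-scale Nagata cover of $B_Y(\star,1)$ pulls back---exactly as in the proof of Lemma~\ref{lem_ass_limit}, followed by the trivial rescaling governed by Lemma~\ref{lemma_moving}---to a cover $\mathcal C_U=\bigcup_{j=0}^n\mathcal C_U^j$ of $U$ that is $(1+o(1))s$-bounded, with each $\mathcal C_U^j$ being $(1-o(1))\bar c\,s$-separated, the $o(1)$ errors being of order $\rho\epsilon/s\asymp\epsilon/\sigma_\ast$ and hence controllable by the choice of $\epsilon$.

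Finally I would glue. Within a fixed coarse family, distinct $U,U'\in\mathcal U_\ell$ are $c_0\rho$-separated, so (restricting each fine piece to its coarse piece) two fine pieces are either $\bar c s$-separated, when they lie in the same $U$, or at least $c_0\rho-2s\gg s$ apart, when they lie in different $U$'s; thus Lemma~\ref{lma:dimforseparated} gives $\dim_N(C_\ell,\bar c',s)\le n$ for $C_\ell:=\bigcup_{U\in\mathcal U_\ell}U$ and a uniform $\bar c'=\min\{\bar c,c_0/\sigma_\ast-2\}$. Since $K=\bigcup_{\ell=0}^{D}C_\ell$ is a union of only $D+1$ sets, Corollary~\ref{cor:Npart} combines them into $\dim_N(K,c'',s)\le n$ with $c''=(\bar c')^{D+1}/5^{D}$ independent of $s$, whence $\dim_{LC}K\le n$. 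The main obstacle is the bookkeeping of this last step: one must choose $\rho$, $\epsilon$, and a fixed-ratio window of scales $\tilde s\in[s/K_0,s]$ simultaneously so that the transfer errors stay below $\tilde s$, the fine scale $\sigma=\tilde s/\rho$ stays below $\bar s_0$, and the input range $[s/K_0,s]$ demanded by Corollary~\ref{cor:Npart} (with $K_0=(2+3/\bar c')^{D}$) is met---all with constants independent of $s$; this is why the tangent bound is needed on the whole interval $\sigma<\bar s_0$ rather than at one scale. A secondary point, handled here by using the coarse cover rather than a separated net, is that $K$ need not be doubling, since its tangents need not be; consequently the coloring argument of Lemma~\ref{local} is unavailable, and it is the finiteness of $\dim_{LC}K$ that instead provides the bounded number $D+1$ of coarse families making the concluding gluing finite.
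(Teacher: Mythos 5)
Your case $(i)$ argument is essentially the paper's: a coarse cover of $K$ at a mesoscale $\rho$ supplied by $\dim_{LC}K<\infty$, a transfer of the tangents' Nagata covers onto each coarse piece via the Gromov--Hausdorff closeness (the paper's Lemma~\ref{lma:dimfortangents}), and gluing by Lemma~\ref{lma:dimforseparated} and Corollary~\ref{cor:Npart} with the same scale-window bookkeeping that the paper isolates in Lemma~\ref{lma:nagatafinalconclusion}. Your closing remark about why one must use the coarse Nagata cover rather than a net-and-coloring argument is also on point.

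The gap is in your reduction of case $(ii)$ to uniform constants. You propose to cover the compact set $\{T_xX : x\in K\}$ by finitely many Gromov--Hausdorff $\epsilon$-balls and transfer a good cover across each $\epsilon$-distance to obtain uniform $\bar c,\bar s_0$. But the transfer of a Nagata bound across a GH-distance $\epsilon$ (Lemma~\ref{lma:dimfortangents}) turns the constant $c$ at scale $\sigma$ into $\frac{c\sigma-2\epsilon}{\sigma+2\epsilon}$, which is useless for $\sigma\lesssim\epsilon/c$: GH-closeness says nothing about the structure of the spaces below scale $\epsilon$. So from a finite $\epsilon$-net you only get uniform constants on a window of scales bounded below by a multiple of $\epsilon$, where the multiple depends on $\min_i c_i$ over the net --- and the net itself depends on $\epsilon$, so $\min_i c_i$ may tend to $0$ as you refine, leaving you unable to fit the required window $[\sigma_*/K_0,\sigma_*]$ inside the transferable range. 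There is no reason for $\inf_{x\in K}c_x$ to be positive: continuity of $x\mapsto T_xX$ does not make the optimal Nagata constant lower semicontinuous, and Example~\ref{ex:nagata} shows exactly this degeneration of constants along a convergent sequence of tangents (there the conclusion genuinely fails, saved in case $(ii)$ only by uniqueness, which your reduction never actually exploits).

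The paper's case $(ii)$ avoids seeking uniform constants altogether. It first uses uniqueness to note that each $T_xX$ admits dilations, so its Nagata bound holds at \emph{all} scales with an $x$-dependent constant $c_x$; it then uses uniqueness again, via the triangle inequality through the common tangent $T_yX$, to compare two different dilations $(4\lambda_x X,y)$ and $(r^{-1}X,y)$ at every point $y$ near $x$, producing the hypothesis \eqref{eq:dimforballs} of Lemma~\ref{lma:nagatafinalconclusion} on the ball $B(x,\tfrac{1}{4\lambda_x})$ with constants depending on $x$ only. This yields $\dim_{LC}\bigl(B(x,\tfrac{1}{4\lambda_x})\bigr)\le n$ locally, and only at the very end is compactness of $K$ invoked to take a finite subcover and glue by Corollary~\ref{cor:Npart}, so that the minimum is over finitely many constants. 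You need some version of this two-scale comparison at the unique tangent (or another mechanism exploiting uniqueness) to close case $(ii)$; as written, that case does not go through.
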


 For proving the above theorem, we need the following two lemmas.

\begin{lemma}\label{lma:dimfortangents}
 Let $(X,x)$ and $(Y,y)$ be two pointed   metric spaces.
Suppose that there exist some $\epsilon, r,c,s,n$ such that 
\[
 {\rm Dist}_{GH}(( X,x) ,( Y,y) )<  \epsilon \qquad \text{and} \qquad \dim_N(B_Y(y,r),c,s) \le n.
\]
 Then, if
 $r' \leq \min\{ 1/\eps, r-2\eps\}$, $c':= \frac{cs - 2\epsilon}{s + 2\epsilon}$,
 and $s':=s+2\epsilon,$ we have
 \[
  \dim_N\left(B_X(x,r'),c' ,s'\right) \le n.
 \]
\end{lemma}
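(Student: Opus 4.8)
The plan is to transport a witnessing Nagata cover of the ball $B_Y(y,r)$ back to a cover of $B_X(x,r')$ through the almost-isometry furnished by the Gromov--Hausdorff closeness, keeping track of how diameters and separations degrade by at most $2\epsilon$. First I would unpack the hypothesis: since ${\rm Dist}_{GH}((X,x),(Y,y))<\epsilon$, by the definition in Section~\ref{sec:GH} we have $\widetilde{\rm Dist}_{GH}((X,x),(Y,y))<\epsilon$, so there is a semidistance $d$ on $Y\sqcup X$ extending the distances of $X$ and $Y$ with $d(x,y)\le\epsilon$ and $B_X(x,1/\epsilon)\subseteq B_{Y\sqcup X}(Y,\epsilon)$. (The third defining inclusion is not needed for this direction.) I then work entirely inside $(Y\sqcup X,d)$.

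Next, fix an $s$-bounded cover $\bigcup_{k=0}^n\mathcal B_k$ of $B_Y(y,r)$ realizing $\dim_N(B_Y(y,r),c,s)\le n$, so that each $\mathcal B_k$ is $cs$-separated. For each member $B$ of this cover I define its thickening in $X$ by
\[
 W_B := B_X(x,r') \cap B_{Y\sqcup X}(B,\epsilon),
\]
and I keep the coloring, grouping $W_B$ according to the color $k$ of $B$. The claim will be that $\bigcup_{k=0}^n\{W_B\}_{B\in\mathcal B_k}$ is the desired cover of $B_X(x,r')$.

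Then I verify the three properties. For covering: given $p\in B_X(x,r')$, the bound $r'\le 1/\epsilon$ lets me apply the neighborhood inclusion to get $q\in Y$ with $d(p,q)<\epsilon$, and $d(y,q)\le d(y,x)+d(x,p)+d(p,q)< \epsilon+r'+\epsilon \le r$ (using $r'\le r-2\epsilon$), so $q\in B_Y(y,r)$ lies in some $B$ and hence $p\in W_B$. For the diameter bound: any two points of $W_B$ are within $\epsilon$ of points of $B$, so $\diam W_B\le \diam B+2\epsilon\le s+2\epsilon=s'$. For separation within a color: if $B\ne B'$ both lie in $\mathcal B_k$, the same $\epsilon$-approximation gives $\dist(W_B,W_{B'})\ge \dist(B,B')-2\epsilon\ge cs-2\epsilon=c's'$, precisely because $c'=(cs-2\epsilon)/(s+2\epsilon)$ and $s'=s+2\epsilon$. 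This yields $\dim_N(B_X(x,r'),c',s')\le n$.

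The steps are conceptually routine, so the only care needed is bookkeeping: the two halves of the hypothesis $r'\le\min\{1/\epsilon,\,r-2\epsilon\}$ must be used in exactly the right places — the $1/\epsilon$ bound to invoke $B_X(x,1/\epsilon)\subseteq B_{Y\sqcup X}(Y,\epsilon)$, and the $r-2\epsilon$ bound to guarantee every approximating point $q$ actually lands inside the covered ball $B_Y(y,r)$. I would also remark that the conclusion carries genuine content only in the regime $cs>2\epsilon$, where $c'>0$; this is exactly the regime in which the lemma is subsequently applied, as the scale $\lambda\to\infty$ forces $\epsilon\to 0$.
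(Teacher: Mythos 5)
Your proof is correct and follows essentially the same route as the paper's: both transport the witnessing cover of $B_Y(y,r)$ through the $\epsilon$-extension of the distances on $Y\sqcup X$, using $r'\le 1/\epsilon$ to land in the $\epsilon$-neighborhood of $Y$, $r'\le r-2\epsilon$ to land inside $B_Y(y,r)$, and the $\pm 2\epsilon$ bookkeeping to get the $(s+2\epsilon)$-boundedness and $(cs-2\epsilon)$-separation. No discrepancies worth noting.
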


\begin{proof}
Since $( X,x) $ and $( Y,y) $ have distance $< \epsilon$, we can see them as a subset of a metric space $Z$ such that $	d(x,y)\leq \epsilon$,
 and $B_{X}(x, 1/\epsilon) \subseteq B_{Z} (   Y, \epsilon)$.

We claim that 
\begin{equation}\label{cheneso}
B_{X}(x, r') \subseteq B_{Z} (   B_Y(y,r), \epsilon).
\end{equation}
Indeed, we have 
$B_{X}(x, r')   \subseteq B_{X}(x, 1/\epsilon) \subseteq B_{Z} (  Y, \epsilon)$ and hence, for all 
$x'\in B_{X}(x, r') $, there is $y'\in X$ such that
 $	d(x',y')\leq \epsilon$.
 Such an $y'$ is such that 
 $	d(y',y)\leq r' +2\epsilon$.
 Hence,
 $B_{X}(x, r')   \subseteq  B_{Z} (   B_Y(y,r' +2\epsilon ), \epsilon)  \subseteq  B_{Z} (   B_Y(y,r ), \epsilon). $ 

Let $\mathcal U$ be an $s$-bounded cover of $B_Y(y,r)$ such that 
 $\mathcal U = \mathcal U_0 \sqcup\ldots\sqcup \mathcal U_n $ with each $\mathcal U_j$ 
$cs$-separated.
Set  $\mathcal V_j:=\{ B_Z(U,\eps) \cap X :  U\in  \mathcal U_j\}.$
Clearly, $\mathcal V_j$ are $(s+2\eps)$-bounded and $(cs -2\eps) $-separated.
Since \eqref{cheneso}, 
the family $ \mathcal V_0 \cup\ldots\cup \mathcal V_n $ gives a cover of  $B_{X}(x, r') $.
\end{proof}


\begin{lemma}\label{lma:nagatafinalconclusion}
 For every choice of constants $\overline{n}, n \in \N$, $ \overline{s}, s_0\in (0,\infty]$, $a, b> 0$ and $c, \overline{c} \in (0,1)$
 there exists $\epsilon \in (0,\frac18)$ such that the following holds.

 Let $K \subset X$. Suppose that 
 \begin{equation}\label{eq:Xdimbound}
   \dim_N(K,\overline{c},s) \le \overline{n}, \qquad \text{for all } 0 < s \le \overline{s},
 \end{equation} 
 and that there exists $r_\epsilon > 0$ such that, for all $x\in K$, for all $r\in (0, r_\eps)$, and for all $s\in (0, s_0)$,
 \begin{equation}\label{eq:dimforballs}
   \dim_N\left(B(x,r/2),\frac{cas-b\epsilon}{as + b\epsilon},r(as + b\epsilon)\right) \le n.
 \end{equation}
 Then 
 \[
  \dim_{LC} K \le n.
 \]
\end{lemma}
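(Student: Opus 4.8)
The plan is to run a local-to-global argument producing, at every sufficiently small scale $\sigma$, an $O(\sigma)$-bounded cover of $K$ split into $n+1$ separated families with all constants independent of $\sigma$; by Definition~\ref{lin con dim} this is exactly $\dim_{LC}K\le n$. The decisive point — which lets us reach the dimension $n$ coming from the tangents rather than the a priori larger $\overline n$ — is that a finite union of sets of Nagata dimension $n$ again has Nagata dimension $n$ (Corollary~\ref{cor:Npart}): the number of glued pieces only worsens the constant. Consequently I will need no doubling hypothesis. Instead, the global bound \eqref{eq:Xdimbound} supplies a \emph{fixed} number $\overline n+1$ of separated families covering $K$ at scale $\sim r$, and the local bound \eqref{eq:dimforballs} is used to refine each piece down to the fine scale $\sigma$.

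Concretely, fix $\epsilon\in(0,\tfrac18)$ (constrained below) and a window $[s_-,s_+]\subset(0,s_0)$ with $cas_->b\epsilon$; then $c'(s):=\frac{cas-b\epsilon}{as+b\epsilon}$ is increasing in $s$ and bounded below by $c'_*:=c'(s_-)\in(0,1)$ on the window. For a small radius $r$ (with $r<r_\epsilon$ and $r/3\le\overline s$) invoke \eqref{eq:Xdimbound} at scale $r/3$ to obtain a cover $\mathcal A=\bigcup_{j=0}^{\overline n}\mathcal A_j$ of $K$ that is $(r/3)$-bounded with each $\mathcal A_j$ being $\overline c(r/3)$-separated. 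Every $A\in\mathcal A_j$ has $\diam A\le r/3$, so $A\subseteq B(x_A,r/2)$ for any $x_A\in A$; hence by \eqref{eq:dimforballs}, monotonicity \eqref{lma:subset}, and $c'(s)\ge c'_*$, we get $\dim_N(A,c'_*,\sigma)\le n$ for every $\sigma=r(as+b\epsilon)$ with $s\in[s_-,s_+]$. Writing $K_j:=\bigcup_{A\in\mathcal A_j}A$, the family $\{A\}_{A\in\mathcal A_j}$ is $\overline c(r/3)$-separated, so Lemma~\ref{lma:dimforseparated} gives $\dim_N(K_j,\tilde c,\sigma)\le n$ with $\tilde c=\min\{c'_*,\tfrac{\overline c}{3(as+b\epsilon)}\}$; for the small scales in play ($as+b\epsilon<\overline c/(3c'_*)$) the second term dominates, so $\tilde c=c'_*$.

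Now $K=\bigcup_{j=0}^{\overline n}K_j$, and the above holds for all $\sigma\in[\,r(as_-+b\epsilon),\,r(as_++b\epsilon)\,]$, i.e. over a genuine window of fine scales obtained by letting $s$ range over $[s_-,s_+]$. Feeding these $\overline n+1$ sets into Corollary~\ref{cor:Npart} yields
\[
 \dim_N\!\Big(K,\tfrac{(c'_*)^{\overline n+1}}{5^{\overline n}},\sigma\Big)\le n\qquad\text{for all }\sigma\in\big[(2+\tfrac3{c'_*})^{\overline n}\,r(as_-+b\epsilon),\ r(as_++b\epsilon)\big].
\]
Letting $r$ vary over $(0,r_\epsilon)$ and writing the last interval as $[\mu_-r,\mu_+r]$ with fixed $\mu_-\le\mu_+$, the union over $r$ of these intervals is $(0,\mu_+r_\epsilon)$; thus $\dim_N(K,c^*,\sigma)\le n$ holds for all $\sigma$ in this range with the single constant $c^*=(c'_*)^{\overline n+1}/5^{\overline n}$, which is $\dim_{LC}K\le n$.

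The main obstacle is the interlocking of constants and scales in the choice of $\epsilon$ and of $[s_-,s_+]$: I need simultaneously $cas_->b\epsilon$ (so $c'_*>0$), the nonemptiness/tiling condition $\mu_-\le\mu_+$, i.e. $(2+3/c'_*)^{\overline n}(as_-+b\epsilon)\le as_++b\epsilon$, and $s_+<s_0$. The tension is that enlarging the ratio $s_+/s_-$ to beat the gluing-shrink factor $(2+3/c'_*)^{\overline n}$ — whose exponent is governed by $\overline n$ from \eqref{eq:Xdimbound} — pushes $s_-$ up, while $cas_->b\epsilon$ and $s_+<s_0$ cap it. All constraints are met by taking $\epsilon$ small enough: for instance $s_-$ a fixed multiple of $b\epsilon/(ca)$, giving $c'_*$ a fixed positive value such as $\tfrac{c}{2+c}$, and $s_+$ a large-but-fixed multiple of $s_-$, with both tending to $0$ as $\epsilon\to0$. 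This is precisely the $\epsilon$ whose existence the lemma asserts.
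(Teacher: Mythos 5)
Your proof is correct and follows essentially the same route as the paper's: use \eqref{eq:Xdimbound} to decompose $K$ at scale $\sim r$ into $\overline n+1$ separated families, upgrade each union via \eqref{eq:dimforballs} and Lemma~\ref{lma:dimforseparated}, glue with Corollary~\ref{cor:Npart}, and choose $\epsilon$ (and the $s$-window) so that the resulting fine-scale intervals are nonempty and sweep out all of $(0,\mu_+ r_\epsilon)$ as $r$ varies. The only differences are cosmetic (a $r/3$-bounded rather than $r/2$-bounded coarse cover, and an explicit window $[s_-,s_+]$ in place of the paper's $[s_\epsilon,s_0]$), and your bookkeeping of the exponent in Corollary~\ref{cor:Npart} is if anything slightly more careful.
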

\begin{proof}
Take $0 < r < \min\{r_\eps,\overline{s}\}$. From  \eqref{eq:Xdimbound}, we have  an $\frac{r}{2}$-bounded cover
$\mathcal U = \mathcal U_0 \sqcup\ldots\sqcup \mathcal U_{\bar n} $ of $K$  where each $\mathcal U_j$ is 
$\frac {\overline{c}r}{2}$-separated. 
%
 By \eqref{eq:dimforballs} we have,  for all  $ U \in \mathcal U $
 \[
   \dim_N\left(U,\frac{cas-b\epsilon}{as + b\epsilon},r(as + b\epsilon)\right) \le n, \qquad \text{ for all } 0 < s < s_0.
 \]
For all $j = 0, \dots, \overline{n}$,  setting
 $U^j := \bigcup_{U \in \mathcal U_j}U$,
 from Lemma~\ref{lma:dimforseparated} we get
 \[
  \dim_N\left(U^j,\min \left\{\frac{cas-b\epsilon}{as + b\epsilon},\dfrac{\overline{c}}{2 (as + b\epsilon ) }\right\},r(as + b\epsilon)\right) \le n,
  \quad \text{  for all  }0 < s < s_0. 
 \]
 
 By decreasing $s$ and $\eps$ to be small enough, we may assume that   $2 (as + b\epsilon) < 1$.
Set    $ {s}_\eps : = \frac{b (2+c)}{ac}\epsilon$. Then one can easily check that, if $s>s_\eps$,
 \[
  \frac{ac {s} -b\epsilon}{ a{s}  + b\epsilon}    \ge \frac{c}{2}.
 \]
 Indeed, one just needs to verify that, by substituting the value of $\eps $ in terms of $s_\eps$, the above inequality is equivalent to  $s>s_\eps$.
 Therefore, setting  $
  c_1 = \min\left\{\frac{c}{2},\overline{c}\right\}
 $
 we have that, for all $s$ between $s_\eps$ and $s_0$ and all  $j = 0, \dots, \overline{n}$, we have
 \[
  \dim_N\left(U^j, c_1,r({as + b\epsilon})\right) \le n.
 \]
 From Corollary \ref{cor:Npart} it then follows that
 \[
  \dim_N\left(K, \frac{c_1^{\overline{n}}}{5^{\overline{n}-1}},r({as + b\epsilon})\right) \le n,
 \]
 for all
 \begin{equation}\label{eq:interval}
 {\left(2 + \frac3{c_1}\right)^{\overline{n} - 1}  r(a{s}_\eps + b\epsilon) \le  r(as + b\epsilon) \le r(as_0+b\epsilon).}
 \end{equation}
 In order to have $\dim_{LC} X \le n$ it now suffices to select the constant $\epsilon$ so that
 there exists some $s$ satisfying \eqref{eq:interval}. This is the case if we take
 \[
   {\epsilon \le as_0\left(\left(2 + \frac3{c_1}\right)^{\overline{n}-1 }   \left(\frac{b(2+c)}{c} + b\right) - b \right)^{-1}.}
 \] 
\end{proof}

\noindent
{\bf Proof of Theorem~\ref{thm:uniformtangents}.}
 We shall prove the claims by checking that the assumptions of Lemma~\ref{lma:nagatafinalconclusion} hold.
 First of all, the assumption $\dim_{LC} X < \infty$ implies that
 there exist $\overline{n},   \overline{s},  \overline{c}$ such that
  \eqref{eq:Xdimbound} is satisfied in both cases of the theorem.
 
 Since by assumption   the tangents are uniformly close  in $K$, for any $\eps>0$ there exists $\lambda_\eps$ such that, for all $\lambda\geq\lambda_\eps$ and all $x\in K$, 
 \begin{equation}\label{eq:tangentivicini}
 {\rm Dist}_{GH}(( \lambda X,x) ,{\rm Tan}( X,x) )<\eps.
 \end{equation}
 
We now consider separately the two situations assumed in the hypothesis of the theorem.
In the first case we will show that  \eqref{eq:dimforballs} holds. So let $\epsilon \in (0,\frac18)$ be fixed.
In this case we are assuming that we already have $c$ and $s_0$ such that, for all $x\in K$ and for all $Y\in {\rm Tan}(X,x)$, 
 \begin{equation} \label{eq:unifdimfortan}
  \dim_N(B_Y(\star, 1),c,s) \le n, \qquad \text{for all } 0 < s < s_0. 
   \end{equation}
By Lemma~\ref{lma:dimfortangents}, from \eqref{eq:tangentivicini} and \eqref{eq:unifdimfortan},
since $1/2 < \min\{1/\eps , 1-2\eps\}$, we have 
  \begin{equation*}
   \dim_N\left(B_{\lambda X}(x,1/2 ),\frac{cs-2\epsilon}{s + 2\epsilon},s + 2\epsilon \right) \le n, \qquad \text{for all } \lambda\geq \lambda_\eps \text{ and } s\in (0,  s_0). 
 \end{equation*} 
 With respect to the distance of $X$, the equation reads as
   \begin{equation*}
   \dim_N\left(B_X(x,\dfrac{1}{2\lambda} ),\frac{cs-2\epsilon}{s + 2\epsilon},  \dfrac{  1}{\lambda} (s + 2\epsilon)\right) \le n,\qquad \text{for all } \lambda\geq \lambda_\eps \text{ and } s\in (0,  s_0),
 \end{equation*}
 which is \eqref{eq:dimforballs} with $r_\eps:= \dfrac{1}{\lambda_\eps}$, (put $r=\dfrac{1}{\lambda}$).
 By Lemma~\ref{lma:nagatafinalconclusion}, the first case then follows.
 
 Regarding the second case, we have that
 \begin{equation} \label{eq:unifdimfortan_bis}
  \dim_N(B_{T_xX}(\star, 1),c_x,s) \le n , 
   \end{equation}
 for some $c_x$ depending on $x\in K$ and for all $s>0$, since unique tangents admit dilations.
 Take now $x \in K$ and let $\eps_x \in (0,\frac18)$ be the $\eps$ in Lemma~\ref{lma:nagatafinalconclusion} given
 by the constants $s_0=\infty$, $a=4$, $b=12$, and
 $c=c_x$.
 We abbreviate $\lambda_x := \lambda_{\eps_x}$. From \eqref{eq:tangentivicini} and \eqref{eq:unifdimfortan_bis}, Lemma~\ref{lma:dimfortangents} implies that,   since $1/2 < \min\{1/\eps_x , 1-2\eps_x\}$, 
   \begin{equation*}
   \dim_N\left(B_{\lambda X}(x,1/2),\frac{c_xs-2\epsilon_x}{s + 2\epsilon_x},s + 2\epsilon_x \right) \le n \qquad \text{for all } \lambda\geq \lambda_x \text{ and all } s>0. 
 \end{equation*}
 So
    \begin{equation*}
   \dim_N\left(B_X(x,\dfrac{1}{2\lambda} ),\frac{c_xs-2\epsilon_x}{s + 2\epsilon_x},  \dfrac{  1}{\lambda} (s + 2\epsilon_x)\right) \le n\qquad \text{for all } \lambda\geq \lambda_x	 \text{ and all } s>0.
 \end{equation*}

  Now take $y\in B_X(x,\frac{1}{4\lambda_x} )$ so that
  $ B_X(y,\frac{1}{4\lambda_x}) \subseteq  B_X(x,\frac{1}{2\lambda_x} )$
 and hence
 $$   \dim_N\left(B_X(y,\dfrac{1}{4\lambda_x} ),\frac{c_xs-2\epsilon_x}{s + 2\epsilon_x},  \dfrac{  1}{\lambda_x} (s + 2\epsilon_x)\right) \le n\qquad \text{for     all } s>0.$$
 Multiplying the distance by 
$  {4\lambda_x}$, we get
    \begin{equation}\label{eq:dimforballsBis}
 \dim_N\left(B_{ 4\lambda_x X}(y,1 ),\frac{c_xs-2\epsilon_x}{s + 2\epsilon_x},  4 (s + 2\epsilon_x)\right) \le n\qquad \text{for all  } s>0.
  \end{equation}
 Take $r\in (0,({4\lambda_x})^{-1})$. We need to compare $B_{  {4\lambda_x}X}(y,1 )$ and
 $B_{ \frac{1}{  r}X}(y,1 )$. Now we are going to use uniqueness of the tangents. Indeed, by the triangle inequality with $T_yX$, from \eqref{eq:tangentivicini}  we have
  \begin{equation}\label{eq:tangentivicini2}
 {\rm Dist}_{GH}\left((   {4\lambda_x} X,y) ,( \dfrac{1}{  r} X,y) \right)<2\eps_x,
 \end{equation}
 since $1/r\geq 4 \lambda_x\geq\lambda_x$.
 Again from Lemma~\ref{lma:dimfortangents}, 
  by \eqref{eq:tangentivicini2} and \eqref{eq:dimforballsBis},  since $1/2 < \min\{1/(2\eps_x) , 1-4\eps_x\}$,
    \begin{equation*}
   \dim_N\left(B_{\frac{1}{r} X}(y,1/2 ),   \frac{\dfrac{c_x   s-2\epsilon_x}{s + 2\epsilon_x} 4(s + 2\epsilon_x)-4\eps_x  }{4(s + 2\epsilon_x)+4\eps_x },4(s + 2\epsilon_x)+4\eps_x \right) \le n, 
 \end{equation*}
 i.e.,
     \begin{equation*}
   \dim_N\left(B_{\frac{1}{r} X}(y,1/2 ),   \frac{4 c_x    s- 12\epsilon_x}{4s +12\eps_x  }
  , 4s +12\eps_x\right) \le n. 
 \end{equation*}
 Finally, multiplying the distance by $r$, we get  \eqref{eq:dimforballs} 
 for $x$ replaced by any $y\in B_X(x,\frac{ 1}{4\lambda_x}) $ and
 with $r_\eps:= \dfrac{1}{4\lambda_x}$, $s_0=\infty$, $a =4$, $b= 12$, and   $c=c_x$.
 By Lemma~\ref{lma:nagatafinalconclusion} we then have
 \[
  \dim_{LC}(B_X(x,\frac{ 1}{4\lambda_x})) \le n.
 \]
 By compactness of $K$ there exists a finite collection of balls $\{B_X(x,\frac{ 1}{4\lambda_x})\}_x$ covering $K$. 
 Using Corollary \ref{cor:Npart} we then conclude that
 \[
  \dim_{LC} K \le n,
 \]
 which finishes the proof of the second case.
%
%
%
\qed\\

\noindent
\textbf{Proof of Theorem~\ref{teorema1}.}
 Since $Y$ is relatively compact, the set $K := \cl{Y}$ is compact.
Note that $\dim_NK = \dim_NY$. By Theorem~\ref{thm:uniformtangents} with assumption (ii),
\[
 \dim_NY \le \sup\{\dim_NT_xX\,:\,x \in K\}.
\]
Take now a point $x$ in the interior of $Y$.
Then by Corollary \ref{cor:Nagatalower} $\dim_NT_xY \le \dim_NY$.
\qed

\subsection{Assouad dimension and uniformly close tangents}\label{Sec:Assouad_uniform_tangents}

Let us now state the analog of  the first part of Theorem~\ref{thm:uniformtangents} for Assouad dimension.

\begin{theorem}\label{thm:assouad1}
Let $X$ be a metric space with uniformly close tangents. Let $C > 1$ and $\alpha \ge 0$.
Assume that, for all $x\in X$ and for all $(Y, y)\in {\rm Tan}(X,x)$, the Assouad dimension
of $Y$ is less than or equal to $\alpha$ with constant $C$.
Then there exists $R > 0$ such that
\[
 \dim_A(B(x,R)) \le \alpha, \qquad \text{for all }x \in X.
\]
\end{theorem}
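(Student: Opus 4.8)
The plan is to mirror the first part of Theorem~\ref{thm:uniformtangents}, replacing the Nagata bookkeeping by the scale\nobreakdash-invariant Assouad one. Fix $\beta>\alpha$; I will produce constants $C_\beta>1$ and $R_\beta>0$, both independent of $x$, with $\dim_A(B(x,R),C_\beta,R_\beta)\le\beta$ for a fixed radius $R>0$ and every $x\in X$. Since each $B(x,R)$ is bounded, an up-to-scale bound of finite exponent makes it doubling, hence totally bounded, so for each fixed $x$ the finitely many coarse scales are absorbed into the constant, which is precisely what Lemma~\ref{lem_ass_2} together with boundedness provides; thus $\dim_A(B(x,R))\le\beta$ for every $\beta>\alpha$, i.e.\ $\dim_A(B(x,R))\le\alpha$. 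In fact any fixed $R$ will do, because the up-to-scale estimate only ever concerns balls of radius below the threshold $R_\beta$.

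The engine is a quantitative, finite\nobreakdash-$\eps$ version of Lemma~\ref{lma:dimfortangents} for the Assouad dimension, proved by the same embedding argument as Lemma~\ref{lem_ass_limit}. Rewriting the hypothesis in ball form, there is a uniform $C_0$ (a fixed multiple of $C$) so that every ball of radius $\rho$ in any $Y\in{\rm Tan}(X,x)$ is covered by $C_0(\rho/\tau)^\alpha$ balls of radius $\tau$. Fix $\eps\in(0,\tfrac1{10})$ and let $\lambda_\eps$ come from uniform closeness. For $z\in X$ and $0<R_1<1/\lambda_\eps$ put $\lambda=1/R_1>\lambda_\eps$ and choose $Y$ with ${\rm Dist}_{GH}((\lambda X,z),Y)<\eps$. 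Embedding $\lambda X$ and $Y$ in one space, I push $B_{\lambda X}(z,1)$ into $B_Y(\star,1+2\eps)$, cover there, and pull the cover back across the $\eps$\nobreakdash-gap; undoing the dilation gives
\[
B_X(z,R_1)\ \text{covered by}\ A\,(R_1/r)^\alpha\ \text{balls of radius}\ r,\qquad 2\eps R_1\le r<R_1,
\]
with $A:=C_0\,4^\alpha$ independent of $x$, $z$, $R_1$.

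Because a single transfer only reaches ratios $R_1/r\le 1/(2\eps)$, I chain it: cover $B_X(z,R_1)$ by $A(2\eps)^{-\alpha}$ balls of radius $2\eps R_1$, cover each of those (again of radius below $1/\lambda_\eps$) by the same rule, and repeat for $k\approx\log(R_1/r)/\log(1/(2\eps))$ steps. Collecting the factors yields a cover of $B_X(z,R_1)$ by at most
\[
A\,(2\eps)^{-\alpha}\,(R_1/r)^{\,\alpha+\frac{\log A}{\log(1/(2\eps))}}
\]
balls of radius $r$, for all $0<r<R_1<1/\lambda_\eps$. Hence $\dim_A(B(x,R),C_\beta,R_\beta)\le \alpha+\frac{\log A}{\log(1/(2\eps))}$ with $C_\beta=A(2\eps)^{-\alpha}$ and $R_\beta=1/\lambda_\eps$, uniformly in $x$. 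As $A=C_0 4^\alpha$ does not depend on $\eps$, the exponent tends to $\alpha$ as $\eps\to0$, so choosing $\eps$ small enough forces it below $\beta$, completing the reduction of the first paragraph.

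The decisive point — and the step I expect to be the main obstacle — is controlling the covering constant under iteration. A naive dyadic chaining of the one\nobreakdash-scale estimate would multiply the constant $A$ once per halving of the radius and thereby inflate the exponent to $\alpha+\log_2 A$, which is useless. The remedy is to iterate over the long range $1/(2\eps)$ per step, so that $A$ is paid only once per factor $1/(2\eps)$ in scale and the exponent surplus $\frac{\log A}{\log(1/(2\eps))}$ can be made arbitrarily small by shrinking $\eps$. The remaining bookkeeping is routine: a smaller $\beta$ forces a smaller $\eps$, hence a smaller threshold $R_\beta=1/\lambda_\eps$, but this only shrinks the scale up to which the exponent\nobreakdash-$\beta$ estimate is claimed, not the fixed radius $R$; the passage from these up\nobreakdash-to\nobreakdash-scale bounds to the genuine Assouad dimension of the bounded ball $B(x,R)$ is supplied, as noted above, by Lemma~\ref{lem_ass_2}.
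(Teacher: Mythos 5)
Your proposal is correct and follows essentially the same route as the paper: a single-scale Gromov--Hausdorff transfer of covers (the paper's Lemma~\ref{lem_ass_1}) iterated over a large fixed scale ratio per step so that the covering constant is paid only once per factor $1/(2\eps)$, followed by Lemma~\ref{lem_ass_2}. The paper's Proposition~\ref{prop:assouad1} encodes your exponent surplus $\alpha+\frac{\log A}{\log(1/(2\eps))}<\beta$ by instead choosing $\delta$ with $(1/\delta)^\beta=C(4/\delta)^\alpha$, which is the same computation in different packaging.
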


In fact, Theorem~\ref{thm:assouad1} is an immediate consequence of the following proposition where
we only need to have a cover of the balls in the tangents centered at the base point.

\begin{proposition}\label{prop:assouad1}
Let $X$ be a metric space with uniformly close tangents.
Assume that there are constants $C>1$ and $\alpha\geq0 $ such that,
for all $x\in X$, for all $(Y, y)\in {\rm Tan}(X,x)$, and for all $\delta \in (0,1)$,
we need at most $C(2/\delta)^\alpha$ balls of radius $\delta/2$ to cover the ball $B(y,2)$.
Then there exists a constant $R'>0$ such that, for all $\beta >\alpha$, there exists $C'>1$
for which
$$\dim_A ( B(x,R') , C', R' )\leq \beta , \qquad \text{for all }x \in X.$$
\end{proposition}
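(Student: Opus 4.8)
The plan is to run the same strategy as in the Nagata case, but more simply because here the covering constant $C$ is uniform over all points and all tangents, so no compactness is needed. Concretely I would: (1) transfer the covering estimate on $B(y,2)$ from the tangents to a single-scale covering estimate for balls of $X$ at every point and every sufficiently small radius; (2) iterate this one-step estimate into a genuine Assouad bound whose exponent can be pushed arbitrarily close to $\alpha$; and (3) invoke Lemma~\ref{lem_ass_2} to replace the resulting exponent-dependent scale by a single scale $R'$.

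For the transfer, fix $\eps\in(0,1/2)$ and let $\lambda_\eps$ come from the uniform convergence to tangents. Given $p\in X$ and $\lambda>\lambda_\eps$, I would pick $(Y,q)\in{\rm Tan}(X,p)$ with ${\rm Dist}_{GH}((\lambda X,p),(Y,q))<\eps$ and realize both inside a common space so that $d(p,q)\le\eps$, $B_{\lambda X}(p,1/\eps)\subseteq B(Y,\eps)$ and $B_{Y}(q,1/\eps)\subseteq B(\lambda X,\eps)$. For $\delta\in(0,1)$ the hypothesis covers $B_Y(q,2)$ by at most $C(2/\delta)^\alpha$ balls $B_Y(w_i,\delta/2)$, whose centers lie well inside $B_Y(q,1/\eps)$, so I can choose $z_i\in\lambda X$ with $d(z_i,w_i)<\eps$. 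A triangle-inequality check (any $z\in B_{\lambda X}(p,1)$ is $\eps$-close to some $w\in B_Y(q,2)$, hence to some $w_i$, hence $d(z,z_i)<\delta/2+2\eps$) shows that $\{B_{\lambda X}(z_i,\delta/2+2\eps)\}_i$ covers $B_{\lambda X}(p,1)$. Rescaling by $1/\lambda$ and setting $R=1/\lambda$, this yields the single-scale estimate: for every $p\in X$, every $R\in(0,1/\lambda_\eps)$ and every $\delta\in(0,1)$, the ball $B_X(p,R)$ is covered by at most $C(2/\delta)^\alpha$ balls of radius $(\delta/2+2\eps)R$.

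To iterate, fix $\beta>\alpha$, take $\delta\in(0,1/4)$ small and then $\eps=\delta^2$, so that $\rho:=\delta/2+2\eps\le\delta$ and the one-step exponent $\beta':=\log\!\big(C(2/\delta)^\alpha\big)/\log(1/\rho)\le \alpha+(\log C+\alpha\log 2)/\log(1/\delta)$ is $<\beta$ once $\delta$ is small enough. With $M:=C(2/\delta)^\alpha$ and $R_\beta:=1/\lambda_\eps$, I would apply the single-scale estimate repeatedly — each application being legitimate since the intermediate balls are centered at points of $X$ and have radius below $R_\beta$ — to cover $B_X(p,R)$ by $M^k$ balls of radius $\rho^kR$ after $k$ steps. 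Choosing $k$ minimal with $\rho^kR\le r$ gives a cover of $B_X(p,R)$ by fewer than $M(R/r)^{\beta'}\le M(R/r)^\beta$ balls of radius $r$, for all $0<r<R\le R_\beta$ and all $p$. Hence $\dim_A(X,M,R_\beta)\le\beta$ for every $\beta>\alpha$.

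The hypotheses of Lemma~\ref{lem_ass_2} are now met (a constant $C_\beta=M$ and a scale $R_\beta$ for each $\beta>\alpha$), so that lemma produces one $R'>0$ and, for each $\beta>\alpha$, a constant $C'_\beta$ with $\dim_A(X,C'_\beta,R')\le\beta$; restricting these covers to the subsets $B(x,R')\subseteq X$ gives the stated $\dim_A(B(x,R'),C',R')\le\beta$ for all $x\in X$. I expect the main obstacle to be exactly this last decoupling: forcing the exponent toward $\alpha$ forces $\eps$, and therefore the usable scale $R_\beta=1/\lambda_\eps$, to shrink as $\beta\to\alpha$, so the scale genuinely depends on $\beta$ and Lemma~\ref{lem_ass_2} is what removes that dependence. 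A secondary point needing care is the additive error $2\eps$ introduced by the transfer into the radii of the covering balls, which must be kept small relative to $\delta$ (here by the choice $\eps=\delta^2$) so that the iterated exponent $\beta'$ still tends to $\alpha$ rather than blowing up.
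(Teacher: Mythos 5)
Your proposal is correct and follows essentially the same route as the paper: a one-step transfer of the tangent covering estimate to small balls of $X$ via the uniform Gromov--Hausdorff closeness (the paper packages this as Lemma~\ref{lem_ass_1} with $\eps=\delta/4$, where you keep $\eps=\delta^2$ separate), an $N$-fold iteration of that single-scale estimate to get an Assouad bound with exponent arbitrarily close to $\alpha$ but at a $\beta$-dependent scale, and finally Lemma~\ref{lem_ass_2} to decouple the scale from the exponent. The only differences are cosmetic choices of constants (the paper solves $(1/\delta)^\beta=C(4/\delta)^\alpha$ exactly rather than estimating the effective exponent $\beta'$), and you correctly identify the scale-versus-exponent dependence as the reason Lemma~\ref{lem_ass_2} is needed.
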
 

The analog of the second part of Theorem~\ref{thm:uniformtangents} is the following.

\begin{theorem}\label{thm:assouad2}
Let $X$ be a metric space with uniformly close unique tangents in a compact subset $K \subset X$.
 Then
\[
  \dim_AK \le \sup\{\dim_AY \,:\, x \in K, Y \in {\rm Tan}(X,x)\}.
 \]
\end{theorem}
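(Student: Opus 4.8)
The plan is to follow the proof of Theorem~\ref{thm:uniformtangents}(ii), simply replacing Nagata covers by ball-counts and the separation constant $c$ by the covering constant $C$. Set $\alpha := \sup\{\dim_A Y : x \in K,\ Y \in {\rm Tan}(X,x)\}$; if $\alpha = \infty$ there is nothing to prove, so assume $\alpha < \infty$ and fix an arbitrary $\beta > \alpha$. Since uniqueness makes ${\rm Tan}(X,x)$ a singleton $T_xX$, it suffices to produce constants $R > 0$ and $C' > 1$ with $\dim_A(K, C', R) \le \beta$ and then to upgrade this up-to-scale bound to a bound on the full Assouad dimension; because $\beta > \alpha$ is arbitrary, this will give $\dim_A K \le \alpha$.

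First I would fix $x \in K$ and record a covering estimate at every scale for the tangent. As $\dim_A T_xX \le \alpha < \beta$, there is $C_x$ so that $B_{T_xX}(\star, 2)$ is covered by at most $C_x(2/\delta)^\beta$ balls of radius $\delta/2$, for every $\delta \in (0,1)$. Because the tangent is unique it admits dilations (the elementary fact used already in the proof of Theorem~\ref{thm:uniformtangents}(ii): if $\lambda_j X \to T_xX$ for every diverging $\lambda_j$, then $\mu T_xX = T_xX$), so this count is scale invariant. Combining it with the uniform convergence ${\rm Dist}_{GH}((\lambda X, x), T_xX) \to 0$ through the Gromov--Hausdorff covering-transfer mechanism of the proof of Lemma~\ref{lem_ass_limit}, now applied \emph{from} the tangent \emph{to} the dilated space $\lambda X$, produces a covering estimate for $B_{\lambda X}(x, 1)$ valid for all $\lambda \ge \lambda_x$; rescaling, this is an Assouad-type estimate for the balls $B_X(x, 1/\lambda)$.

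The key step, and the one where uniqueness is genuinely used, is to promote these single-scale estimates, valid for all large $\lambda$, into an estimate holding over an entire interval of scales, i.e. into $\dim_A(B_X(x, R_x), C'_x, R_x) \le \beta$ for suitable $R_x, C'_x > 0$. Here I would copy the scale-sliding of \eqref{eq:dimforballsBis}--\eqref{eq:tangentivicini2}: to compare the dilations $4\lambda_x X$ and $\tfrac{1}{r}X$ one routes the triangle inequality through their common limit $T_xX$, and since both factors exceed $\lambda_x$ each is $\epsilon_x$-close to $T_xX$ and hence $2\epsilon_x$-close to the other, so the covering bound transports from one scale to the next. This is exactly the mechanism that collapses in Example~\ref{ex:nagata}, where the tangent is not unique, and it is the main obstacle: one must track how $R_x$ and the constant $C'_x$ degrade at each transfer and verify that, after dilating back by $r$, the count is still of the form $C'_x(R''/r)^\beta$ down to the fixed scale $R_x$.

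Finally I would globalize using compactness of $K$. Cover $K$ by finitely many balls $B(x_i, R_{x_i})$, $i = 1, \dots, N$, put $C' := \max_i C'_{x_i}$, and choose a Lebesgue-type radius $R > 0$ so that every ball of radius $R$ in $K$ lies inside some $B(x_i, R_{x_i})$; the local estimates then give $\dim_A(K, C', R) \le \beta$. Since $K$ is compact, hence bounded and doubling, a routine two-scale covering argument of the type used in Lemma~\ref{lem_ass_2} (cover a large ball first by finitely many $R$-balls, then refine each down to radius $r$) upgrades this to $\dim_A K \le \beta$. As $\beta > \alpha$ was arbitrary, $\dim_A K \le \alpha$, which is the assertion.
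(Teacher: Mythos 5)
Your proposal is correct and follows essentially the same route as the paper's own proof: a covering estimate on $B_{T_xX}(\star,2)$, transferred to $B_{\lambda X}(x,1)$ by the Gromov--Hausdorff mechanism of Lemma~\ref{lem_ass_1}, then slid across all scales by pivoting the triangle inequality through the unique tangent, iterated geometrically, and globalized by compactness plus a Lemma~\ref{lem_ass_2}-type large-scale patch. The one detail to state precisely is that the scale-sliding must be carried out at every center $z \in B(x,\tfrac{1}{4\lambda_x})\cap K$ (first covering the larger ball $B(x,\tfrac{1}{2\lambda_x})$ via $T_xX$, then pivoting through $T_zX$ rather than $T_xX$ when comparing $8\lambda_x X$ and $r^{-1}X$ at $z$), which is exactly what the equations \eqref{eq:dimforballsBis}--\eqref{eq:tangentivicini2} you cite do in the Nagata case.
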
 

Again, as for the Nagata dimension, we have to make one of the two assumptions on the tangents, given in the two previous theorems, 
in order to be able to get a bound on the Assouad dimension of the space from the Assouad dimensions of its tangents.
The following example, similar to Example \ref{ex:nagata}, shows that the assumptions are necessary.

\begin{example}\label{ex:assouad}
 In Example \ref{ex:nagata} we defined the space $X$ as a subset of the Euclidean plane.
 The construction had three stages: first we defined $S_n$, then using it $E_n$ and finally $X$.
 Here we replace $S_n$ by a set $\{0,1\}^n$. We want the points in this set to be equidistant
 and so we consider $\{0,1\}^n \subset \R^n$ with the maximum norm.

 We define $E_n \subset \R^n$ as
 \[
  E_n = \left\{\sum_{i=n}^\infty a_i 2^{-i^2} \,:\, a_i \in \{0,1\}^n \text{ if } i \text{ odd, and }a_i \in \{0,1\}\text{ if }i\text{ even }\right\}
 \]
  and using it we define $X \subset \R^\N$ as
 \[
  X = \cl\left({\bigcup_{n=1}^\infty (E_n + 2^{-n^2})} \right),
 \]
 where the embedding of different dimensional $\R^n$ to $\R^\N$ are understood 
 by identifying $x \in \R^n$ with $(x,0,0,\dots) \in \R^\N$. We equip $X$ with the supremum distance of $\R^\N$.
 

 We have
 \[
  {\rm Tan}(E_n,x) = \left\{\{0,t\}^n \,:\, t \in [0,\infty)\right\} \cup \left\{\{0,t\} \,:\, t \in [0,\infty)\right\}
 \]
 for all $n \in \N$ and $x \in E_n$, and
 \[
  {\rm Tan}({X},0) = \left\{\{0,t\} \,:\, t \in [0,\infty)\right\}.
 \]

As in Example \ref{ex:nagata}, we have uniformly close tangents essentially because 
\[
 \left\{\{0,t\} \,:\, t \in [0,\infty)\right\} \subset {\rm Tan}(E_n,x)
\]
for all $x \in X$. This set of tangents takes care of the scales where other sets of the form $\{0,1\}^n$ 
can not be yet seen.

Take any $\alpha > 0$.
For any  $x \in X$ and any $Y \in {\rm Tan}(X,x)$ the tangent $Y$ has only finite number of points and hence $\dim_AY \le \alpha$.
However, for any $n \in N$ the set $\{0,1\}^n$ needs $2^n$ balls of radius $r < 1$ to cover it. Therefore
for any $C>0$ there are arbitrarily small balls $B(x,r)$ in $X$ that can not be covered by less than $C2^\alpha$ balls of radius $\frac{r}2$.
Thus $\dim_AX \ge \alpha$. This was true for any $\alpha>0$ and so,
\[
 \dim_AX = \infty \qquad \text{and} \qquad \sup\{\dim_AY \,:\, x \in X, Y \in {\rm Tan}(X,x)\} = 0.
\]
Hence, the two assumptions in Theorem~\ref{thm:assouad1} and in Theorem~\ref{thm:assouad2} are necessary.
\end{example}

Before proving Proposition \ref{prop:assouad1} we provide a lemma.

\begin{lemma}\label{lem_ass_1}
Assume that, for some $\delta\in (0,1)$, we have
${\rm Dist}_{GH}(( X,x) ,( Y,y) )< \delta/4 $
and that, for some $L\in \N$, we have that the ball
$B_Y(y,2)$ can be covered with  $L$ balls of radius $\delta/2$.
Then  the ball
$B_X(x,1)$ can be covered with $L$ balls of radius $\delta$.
\end{lemma}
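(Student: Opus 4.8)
The plan is to unfold the definition of ${\rm Dist}_{GH}$ and transport the given cover from $Y$ to $X$ through an almost-isometric gluing. Since $\delta \in (0,1)$ we have $\delta/4 < 1/2$, so the hypothesis ${\rm Dist}_{GH}((X,x),(Y,y)) < \delta/4$ forces $\widetilde{\rm Dist}_{GH}((X,x),(Y,y)) < \delta/4$; hence I can fix some $\eps < \delta/4$ together with an extension $d$ of the distances on $Z := Y \sqcup X$ realizing it, so that $d(x,y) \le \eps$, $B_X(x,1/\eps) \subseteq B_Z(Y,\eps)$, and $B_Y(y,1/\eps) \subseteq B_Z(X,\eps)$. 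Note that $1/\eps > 4$, which will let me treat all the balls of radius $1$ or $2$ appearing below as sitting inside the ranges where these two inclusions apply.

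Next I would record the given cover $B_Y(y,2) \subseteq \bigcup_{i=1}^L B_Y(y_i, \delta/2)$ and pull the centers back into $X$. For this I first discard any $y_i$ whose ball does not meet $B_Y(y,2)$; for each remaining (relevant) index, picking $q' \in B_Y(y,2) \cap B_Y(y_i,\delta/2)$ gives $d(y_i,y) \le d(y_i,q') + d(q',y) < \delta/2 + 2 < 5/2 < 1/\eps$, so $y_i \in B_Y(y,1/\eps) \subseteq B_Z(X,\eps)$ and I may choose $x_i \in X$ with $d(y_i,x_i) < \eps$. The number of chosen $x_i$ is then at most $L$.

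Then comes the covering argument. Take $p \in B_X(x,1)$. Since $1 < 1/\eps$, we have $p \in B_X(x,1/\eps) \subseteq B_Z(Y,\eps)$, so there is $q \in Y$ with $d(p,q) < \eps$; by the triangle inequality $d(q,y) < \eps + 1 + \eps = 1 + 2\eps < 2$, so $q \in B_Y(y,2)$ and therefore $q \in B_Y(y_i,\delta/2)$ for some (necessarily relevant) index $i$. For that $i$ the center $x_i$ is defined, and
\[
 d(p,x_i) \le d(p,q) + d(q,y_i) + d(y_i,x_i) < \eps + \frac{\delta}{2} + \eps = 2\eps + \frac{\delta}{2} < \delta,
\]
using $\eps < \delta/4$ in the last step. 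Hence $p \in B_X(x_i,\delta)$, and the family $\{B_X(x_i,\delta)\}_i$ of at most $L$ balls covers $B_X(x,1)$, as required.

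I do not expect a genuine obstacle here; the content is entirely bookkeeping of constants. The one point to watch is the coordination of scales: the base-point displacement $d(x,y)\le\eps$ is absorbed by enlarging the radius from $1$ (in $X$) to $2$ (in $Y$), while the two $\eps$-jumps incurred by passing $X \to Y \to X$ inflate the covering radius from $\delta/2$ to $\delta$, which is exactly what the assumption $\eps < \delta/4$ guarantees. I would also double-check at the outset that $1/\eps$ exceeds every radius ($1$, $2$, $5/2$) at which the defining inclusions of $\widetilde{\rm Dist}_{GH}$ are invoked, which holds because $\eps < \delta/4 \le 1/4$.
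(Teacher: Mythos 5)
Your proof is correct and follows essentially the same route as the paper's: realize the Gromov--Hausdorff bound by an extension of the distances on $Y \sqcup X$, push the centers $y_i$ of the given cover of $B_Y(y,2)$ to nearby points $x_i \in X$, and conclude by the triangle inequality that the balls $B_X(x_i,\delta)$ cover $B_X(x,1)$. The only (harmless) cosmetic difference is that you explicitly discard the irrelevant centers where the paper simply says one may assume $d(y,y_j)\le 2+\delta/2$.
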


\proof
 Set $\eps=\delta/4$.
Since $( X,x) $ and $( Y,y) $ have distance $< \epsilon$, we can see them as a subset of a metric space $Z$ such that $	d(x,y)\leq \epsilon,
	B_{  X}(x, 1/\epsilon) \subseteq B_{Z} ( Y, \epsilon),$ and $
	B_{Y}(y, 1/\epsilon) \subseteq B_{Z} (   X, \epsilon)$.
	
By assumption, there are points $y_1, \ldots, y_L\in Y$ such that
$$B_Y(y,2)\subseteq \bigcup_{j=1}^L B_Y(y_j, \delta/2).$$
We may assume that
$d(y,y_j)\leq 2+\delta/2.$
Since $2+\delta/2<1/\eps$, for all $j$ there is $x_j\in X$ with $d_Z(x_j, y_j)<\eps.$
We claim that 
$$B_X(x,1)\subseteq \bigcup_{j=1}^L B_X(x_j, \delta).$$
Indeed, pick $x'\in B_X(x,1)$. Then there is $y'\in B_Y(y, 1+2\eps)$ such that 
$d_Z(x', y')<\eps.$
Since $1+2\eps<2$, there exists $j$ such that $y'\in B_Y(y_j, \delta/2).$
Therefore, 
$d(x',x_j)\leq d(x',y')+d(y',y_j)+d(y_j,x_j)<\eps+\delta/2+\eps=\delta.$
\qed
\\
\\
{\bf Proof of Proposition \ref{prop:assouad1}.}
 Fix  $\beta >\alpha$.
Let $\delta\in (0,1)$ be such that
$(1/\delta)^\beta = C (4/\delta)^\alpha=:L.$
By assumption, there exists $\lambda_0>1$ such that, for all $\lambda >\lambda_0$ and for all $x\in X$, there exists   
${(Y,y)} \in {\rm Tan}(X,x)$  such 
$$ {\rm Dist}_{GH}( (\lambda X,x), {(Y,y)}) <\delta /4. $$

Let $R$ and $r$ such that $0<r<R<1/\lambda_0$.
Fix $N$ such that
$$\delta^N\leq \dfrac{r}{R} \leq \delta^{N-1}.$$
We will apply Lemma~\ref{lem_ass_1} $N$ times.
First, since $1/R>\lambda_0$,  by  Lemma~\ref{lem_ass_1}, we have that,  in the contracted metric space $\frac{1}{R}X$,
we need at most  $L:=C (4/\delta)^\alpha$ balls of radius $\delta$ to cover 
 $B_{\frac{1}{R}X} (x,1)$. Such balls are of the form
 $$B_{\frac{1}{R}X} (x',\delta) =B_{\frac{1}{\delta R}X} (x',1)   .$$
 Each of these balls needs at most $L$ 
balls of radius $\delta$ to cover it in the space  $\frac{1}{\delta R}X$.
Iterating, we need at most $L^N$ balls of radius $\delta$, with respect to 
the distance of  $\dfrac{1}{\delta^{N-1} R}X$, to cover
 $$B_{\frac{1}{R}X} (x,1) =B_{ X} (x,R).$$
 Such sets are balls of radius 
$\delta^{N} R$ 
 for $X$.
 In other words, we covered 
 $B_{ X} (x,R)  $ with balls of radius $r$, since $r> \delta^{N} R$.
 The number of these balls is bounded by 
 $$L^N=  (1/\delta)^{\beta N}
 =(1/\delta)^\beta (1/\delta^{N-1})^\beta   
\leq
 1/\delta^\beta (R/r)^\beta   
 .$$
 Applying Lemma  \ref{lem_ass_2}
  with $C_\beta :=  1/\delta^\beta $, we conclude.
 \qed
\\
\\
{\bf Proof of Theorem~\ref{thm:assouad2}.}
 Denote by $(T_xX,\star)$ the unique element in ${\rm Tan}(X,x)$, for $x \in K$. Take 
 \[
 \beta > \alpha > \sup\{\dim_AT_xX \,:\, x \in K\}.
  \]
 Fix some $x \in K$ and let $1 < C_x < \infty$ be such that for any $0 < r < R < \infty$ and $y \in T_xX$
 we need at most
 $C_x(R/r)^\alpha$ balls of radius $r$ to cover the
 ball $B_{T_xX}(y,R)$. Let $\delta_x\in (0,1)$ be such that
 $(\frac1{16\delta_x})^\beta = C_x (4/\delta_x)^\alpha=:L.$
By assumption, there exists $\lambda_x>1$ such that, for all $\lambda >\lambda_x$ and for all $y\in K$ we have
\[
 {\rm Dist}_{GH}( (\lambda X,y), (T_yX,\star)) <\delta_x /4.
\]

Our aim is to show that for any $z \in B(x,\frac1{4\lambda_x}) \cap K$ and $0 < r < \frac1{4\lambda_x}$
we need at most $L$ balls of radius $8\delta_x r$ to cover the ball $B(z,r)$. Fix such $z$ and $r$. By the definition of $C_x$ the ball 
$B_{T_xX}(\star,2)$ needs at most $L$ balls of radius $\frac{\delta_x}{2}$ to cover it. Since $2\lambda_x > \lambda_x$ 
we have
\[
 {\rm Dist}_{GH}( (2\lambda_x X,x), (T_xX,\star)) < \delta_x / 4,
\]
and so by Lemma~\ref{lem_ass_1} we need at most $L$ balls
of radius $\frac{\delta_x}{2\lambda_x}$ to cover the ball $B(x,\frac1{2\lambda_x})$.
Since $B(z, \frac1{4\lambda_x}) \subset B(x,\frac1{2\lambda_x})$, the same collection of balls covers $B(z, \frac1{4\lambda_x})$.

Using the fact that the tangent of $X$ at $z$ is unique and the triangle inequality, we get
\begin{align*}
 {\rm Dist}_{GH}( & (r^{-1} X,z), (8\lambda_xX,z)) \\
 & \le {\rm Dist}_{GH}( (r^{-1} X,z), (T_zX,\star)) + {\rm Dist}_{GH}( (T_zX,\star), (8\lambda_xX,z)) < \delta_x /2. 
\end{align*}

Since the ball $B_{8\lambda_xX}(z,2) = B_X(z,\frac1{4\lambda_x})$ needed at most $L$ balls of the type 
$B_{8\lambda_xX}(x',4\delta_x) = B_X(x',\frac{\delta_x}{2\lambda_x})$ to cover it, by Lemma~\ref{lem_ass_1} we need no more than
$L$ balls of radius $8\delta_x r$ to cover the ball $B(z,r)$.

Now we continue like in the proof of Proposition \ref{prop:assouad1}.
Let $R$ and $r$ such that $0<r<R<\frac1{4\lambda_x}$.
Fix $N$ such that
$$(16\delta_x)^N\leq \dfrac{r}{R} \leq (16\delta_x)^{N-1}.$$

First, since $R< \frac1{4\lambda_x}$,  by the above considerations 
we need at most $L$ balls of radius $8\delta_x R$ to cover the set $B(x,R)\cap K$, and hence at most $L$ balls of radius $16\delta_x R$ centered at $B(x,R) \cap K$.
Each of these balls, since they are centered at $B(x,R)\cap K$, need at most $L$ balls of radius $(16\delta_x)^2 R$ centered at $B(x,R) \cap K$ to cover them.
Continuing inductively $N$ times, we obtain a cover of $B(x,R)\cap K$ with at most $L^N$ balls of radius $(16\delta_x)^N R$.
Again we estimate the number 
\[
L^N = (16\delta_x)^{-\beta N} =(16\delta_x)^{-\beta} ((16\delta_x)^{1-N})^\beta   
\leq
 (16\delta)^{-\beta} (R/r)^\beta.
\]
Therefore, $\dim_A(B(x,\frac1{4\lambda_x})\cap K) \le \beta$. By compactness of $K$ we then have
 $\dim_A K \le \beta$. Letting $\alpha$ and $\beta$ tend to $\sup\{\dim_AT_xX \,:\, x \in K, {\rm Tan}(X,x) = \{T_xX\}\}$
 finishes the proof.
\qed\\

\noindent
\textbf{Proof of Theorem~\ref{teorema1b}.}
 Since $Y$ is relatively compact, $K := \cl{Y}$ is compact.
Note that $\dim_AK = \dim_AY$. By Theorem~\ref{thm:assouad2},
\[
 \dim_AY \le \sup\{\dim_AT_xX\,:\,x \in K\}.
\]
Take now a point $x$ in the interior of $Y$.
Then by Corollary \ref{cor:Assouadlower} $\dim_AT_xY \le \dim_AY$.
\qed

\section{Nagata dimension of Carnot groups and   equiregular subRiemannian manifolds}\label{sec:equiSR}
Let $M$ be a smooth manifold. Let 
$\distr$ be a smooth subbundle 
of the tangent bundle of $M$.
Denote by $\Gamma(\distr)\subset \VecM$   the $C^{\infty}(M)$-module of the smooth sections of $\distr$. 
One says that  $ \distr$ satisfies the \emph{bracket-generating condition} if
$$ \bigcup_{ j\in \N} \distr_q^j=T_qM, \quad \forall q\in M,
$$
where
\begin{equation} \label{Hor}
\distr_q^j:= \text{span}\{[X_1,[X_2,\ldots,[X_{j-1},X_j]]](q)~|~X_i\in\Gamma(\distr)\} \subseteq T_qM, \quad \forall q\in M, \, j\in \N .
 \end{equation}

A subRiemannian manifold  is a triple $ (M,\distr,\metr)$, where $M$ is a connected   smooth manifold, $\distr$ is a bracket-generating subbundle 
of the tangent bundle of $M$
 and $\metr$ is a Riemannian metric tensor restricted to $\distr$. 

A subRiemannian manifold has a natural structure of metric space, where the distance is the so-called {\em Carnot-Carath\'eodory distance}
\begin{align*}
d_{cc}(p,q)=
\inf\Big\{\int_0^T&\sqrt{\metr_{\gamma(t)}(\dot\gamma(t),\dot\gamma(t))}\,dt~\Big|~ \gamma:[0,T]\to M \mbox{ is a Lipschitz curve},\\
&\quad\qquad\qquad\qquad\gamma(0)=p,\gamma(T)=q, ~~\dot \gamma(t)\in\distr_{\gamma(t)}\mbox{ a.e. in $[0,T]$} \Big\}.
\end{align*}
As a consequence of Chow-Rashevsky
Theorem
 such a  distance is always finite  and induces on $M$ the original topology.

%
%
%

 The subRiemannian manifold is called  {\em equiregular} if
 the dimensions of the spaces $\distr^{i}_q,\  i\in \N$, as defined in \eqref{Hor}, do not depend on the point $q$.

%
%
%
%
%
%
%
%
%
%
%

Very particular subRiemannian manifolds are the Carnot groups.
Let us briefly recall that a {\em Carnot group} is a stratified Lie group endowed with a left-invariant 
Carnot-Carath\'eodory metric where the subbundle is the first 
stratum of the Lie algebra.
See \cite{LeDonne_characterization} for an introduction to Carnot groups from a point of view of metric geometry.

\begin{theorem}[Mitchell-Bella\"iche \cite{bellaiche}]\label{Mitchell:thm}
Let $M$ be an equiregular subRiemannian manifold equipped with its Carnot-Carath\'eodory distance $d_{cc}$.
Then, at every point the tangent space is a Carnot group of same topological
dimension. Moreover,
the convergence to the tangents is uniform on compact sets.
\end{theorem}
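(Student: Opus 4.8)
The plan is to construct, at each point $q \in M$, the \emph{nilpotent approximation} of the subRiemannian structure through privileged coordinates and anisotropic dilations, and then to verify separately the algebraic identification of the tangent with a Carnot group and the uniform metric convergence.

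First I would fix $q$ and use equiregularity to choose a local frame $X_1, \dots, X_n$ of $\VecM$ adapted to the flag, in the sense that $X_1, \dots, X_{n_i}$ span $\distr_p^i$ for all $p$ near $q$, where $n_i := \dim \distr_q^i$; this is possible precisely because the $n_i$ are locally constant. Assigning to the index $j$ the weight $w_j := i$ whenever $n_{i-1} < j \le n_i$, and passing to privileged coordinates built from the adapted frame, I would introduce the anisotropic dilations $\delta_\lambda(x_1, \dots, x_n) := (\lambda^{w_1} x_1, \dots, \lambda^{w_n} x_n)$.

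The analytic core is then to rescale the generating fields: the suitably normalized rescalings $X_i^\lambda$ of $X_i$ associated with $\delta_\lambda$ converge, in privileged coordinates, as $\lambda \to \infty$ --- uniformly on compact sets together with all derivatives --- to limit fields $\hat X_1, \dots, \hat X_{n_1}$ that are $\delta_\lambda$-homogeneous of degree $-1$. These limit fields generate a graded nilpotent Lie algebra, and the associated simply connected group endowed with the left-invariant Carnot-Carath\'eodory metric whose horizontal bundle is $\mathrm{span}(\hat X_1, \dots, \hat X_{n_1})$ is the sought Carnot group $\hat M_q$. Since its underlying manifold is $\R^n$ with $n = \dim M$, the topological dimension of $\hat M_q$ equals that of $M$.

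To promote this to pointed Gromov-Hausdorff convergence of $(\lambda M, q)$ to $\hat M_q$, I would invoke the ball-box theorem: in privileged coordinates the ball $B_{d_{cc}}(p, r)$ is comparable, with constants uniform for $p$ in a compact set and $r$ small, to the box $\prod_{j=1}^n [-C r^{w_j}, C r^{w_j}]$. Combining this comparison with the $C^\infty$-convergence $X_i^\lambda \to \hat X_i$ shows that the rescaled distances $\lambda\, d_{cc}$ converge to the Carnot metric of $\hat M_q$, so that the measurement ${\rm Dist}_{GH}$ of Section \ref{sec:GH} tends to $0$. The main obstacle --- which is the technical heart of the Mitchell-Bella\"iche theorem --- is exactly making these ball-box and distance estimates hold with constants that are uniform as the base point ranges over a fixed compact set. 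This forces one to choose the adapted frame and the privileged coordinates so as to depend continuously on the base point, and uses crucially that, by equiregularity, the weights $w_j$ and the homogeneous dimension $\sum_j w_j$ do not change from point to point; the full details are carried out in \cite{bellaiche}.
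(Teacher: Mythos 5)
The paper does not prove this statement: it is quoted as a known result and attributed to Mitchell and Bella\"iche, with the reference \cite{bellaiche} standing in for the proof. So there is no in-paper argument to compare yours against. That said, your outline is a faithful summary of the standard proof from that literature: adapted frames exist locally precisely because equiregularity makes the flag dimensions $n_i$ constant, the weights and privileged coordinates give the anisotropic dilations, the rescaled generators converge to degree $-1$ homogeneous fields whose Lie algebra is stratified nilpotent, and the resulting Carnot group is diffeomorphic to $\R^n$, hence has the same topological dimension. The passage from $C^\infty$ convergence of vector fields to pointed Gromov--Hausdorff convergence of the rescaled metric spaces via the ball-box theorem is also the standard route.

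The one caveat is that you explicitly defer the genuinely hard part --- the uniformity of the ball-box constants and of the distance estimates as the base point varies over a compact set --- to the reference. This is precisely the content that distinguishes the ``uniformly close tangents'' conclusion from mere pointwise convergence, and it is the property the rest of the paper actually uses (it is the hypothesis of Theorem \ref{teorema1} and Theorem \ref{thm:uniformtangents}). As a self-contained proof your proposal is therefore incomplete at exactly the step that matters most for this paper; as a reconstruction of how the cited theorem is proved, it is accurate and correctly identifies where equiregularity enters (constancy of the weights and of the homogeneous dimension, and continuous dependence of the adapted frame on the base point).
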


We will be able to deduce the local Nagata dimension of an equiregular subRiemannian manifold, since we know the Nagata dimension of Carnot groups,
by a result 
 originally proved by 
Urs Lang and the first-named author.  
Many thanks go to Lang for giving the permission to include here a short proof. 
\begin{theorem}[Lang \& Le Donne]\label{Lang_LeDonne:thm}
The Nagata dimension of a Carnot group equals its topological dimension.
\end{theorem}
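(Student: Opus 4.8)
The plan is to write $n:=\dim_{\mathrm{top}}G$ and to prove the two inequalities $\dim_N G\ge n$ and $\dim_N G\le n$ separately. The first is essentially free: since $d_{cc}$ induces the manifold topology we have $\dim_{\mathrm{top}}(G,d_{cc})=n$, and by the general fact that the topological dimension never exceeds the linearly controlled (and a fortiori the Nagata) dimension, recorded in \cite[Theorem 2.2]{Lang-Schlichenmaier}, we get $\dim_N G\ge \dim_{LC}G\ge n$. All the work is therefore in the reverse inequality $\dim_N G\le n$.

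First I would use the homogeneity of a Carnot group to collapse the problem to a single scale. The dilations $\delta_\lambda$ are group automorphisms with $d_{cc}(\delta_\lambda p,\delta_\lambda q)=\lambda\,d_{cc}(p,q)$, so $(G,d_{cc})$ is isometric to $(G,\lambda d_{cc})$ for every $\lambda>0$. Pushing a cover forward under $\delta_\lambda$ then shows that $\dim_N(G,c,s)\le n$ at one scale $s$ forces $\dim_N(G,c,\lambda s)\le n$ for all $\lambda$. Consequently it is enough to show $\dim_{LC}G\le n$, or even just to produce, for one scale $s_0$ and one constant $c>0$, an $s_0$-bounded cover $\mathcal B=\bigcup_{k=0}^n\mathcal B_k$ of $G$ in which each $\mathcal B_k$ is $cs_0$-separated: the dilation argument upgrades this automatically to all scales, hence to $\dim_N G\le n$. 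By left-invariance every ball $B_{cc}(x,r)$ is isometric to $B_{cc}(e,r)$, and Carnot groups are Ahlfors $Q$-regular, hence doubling; so the doubling hypothesis needed to glue locally (as in Lemma~\ref{local} and Corollary~\ref{cor:Npart}) is automatic, and the entire difficulty is concentrated in exhibiting one genuinely $(n+1)$-coloured, metrically controlled cover.

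For that construction I would work in exponential coordinates $\R^n\cong G$ adapted to the stratification $\mathfrak g=V_1\oplus\cdots\oplus V_s$, in which $\delta_\lambda$ acts diagonally with weights $1,\dots,s$ and the ball--box theorem identifies $B_{cc}(e,r)$, up to fixed constants, with the graded box $\{\,|x_i|\le r^{\deg i}\,\}$. The model to imitate is the standard Lebesgue cover of $\R^n$: one partitions the coordinate space into cubes at a fixed scale and splits them into $n+1$ mutually separated colour classes by the staggering of \cite[Lemma~2.4]{Assouad83}, a construction that uses precisely the $n$ coordinate directions. Transporting such a cover to $G$ and re-measuring it with $d_{cc}$ through the ball--box estimates produces a family whose diameters and separations are controlled at the chosen scale $s_0$; assembling the $n+1$ colour classes (each a $cs_0$-separated family of boxes, as in Lemma~\ref{lma:dimforseparated}) gives exactly the cover required in \eqref{Nag Dim}. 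Combined with the dilation step above, this yields $\dim_N G\le n$, and via Lemma~\ref{local} (together with the Riemannian Example~\ref{ex:Liegroup} used to pin down $\dim_{\mathrm{top}}$) one may phrase the globalisation purely in terms of balls.

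The hard part is the last point: forcing the covering multiplicity down from the doubling value, of order $Q+1$ coming from a naive net-and-colour argument, to the topological value $n+1$, while respecting the anisotropy of $d_{cc}$ in which the higher strata are stretched. In the abelian case this is just the classical $(n+1)$-colouring of a cube grid, but in a genuine Carnot group the group translation does not commute with the coordinate grid, so one must verify that the staggered cover stays $cs_0$-separated and $s_0$-bounded in the Carnot--Carath\'eodory metric and not merely in coordinates. A clean way to handle this is to prove the bound first for rational, lattice-admitting Carnot groups, where a fundamental domain of a Mal'cev lattice $\Gamma\cong\Z^n$ furnishes an honest staggered cover, and then to deduce the general case by a uniform-constant limit of such covers as the structure constants vary; the limit is legitimate because the cover constants depend only on $n$ and on the fixed dilation weights, not on the individual group.
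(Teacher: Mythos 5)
Your skeleton coincides with the paper's: the lower bound comes from $\dim_{\mathrm{top}}\le\dim_N$ via \cite[Theorem 2.2]{Lang-Schlichenmaier}, and the upper bound is reduced by the dilations $\delta_\lambda$ to producing a single cover, at one fixed scale, that is bounded and has multiplicity $n+1$ with some positive separation parameter; dilating that cover then handles all scales. Up to that point you are fine. The gap is in how you produce the one cover. You push a staggered cube cover of $\R^n$ through exponential coordinates and propose to ``re-measure'' it with $d_{cc}$ via the ball--box theorem; you correctly identify that the $d_{cc}$-separation and boundedness of the colour classes is the point that actually needs proof, and you then outsource it to an approximation by rational, lattice-admitting Carnot groups. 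That last step does not work as stated: by Mal'cev's criterion a simply connected nilpotent Lie group admits a lattice if and only if its Lie algebra has a basis with rational structure constants, and there exist nilpotent Lie algebras with no rational form at all, so lattice-admitting Carnot groups do not approximate the general case; moreover the assertion that ``the cover constants depend only on $n$ and the dilation weights, not on the individual group'' is precisely the uniformity one would have to prove and is nowhere justified. Even in the lattice case it is unclear that translates of a Mal'cev fundamental domain admit an $(n+1)$-colouring that is metrically separated in $d_{cc}$ rather than merely in coordinates.

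The irony is that the difficulty you flag as ``the hard part'' disappears once you notice that at a \emph{single} scale no quantitative compatibility between $d_{cc}$ and the Riemannian (or coordinate) metric is needed beyond the facts that they induce the same topology and are both left-invariant. This is exactly what the paper does: $(G,d_R)$ has linearly controlled dimension $n$ with parameters $c,\bar s$ by Example~\ref{ex:Liegroup} and Lemma~\ref{local} (your staggered cube cover lives here, where it is unproblematic); one then fixes $s_0\in(0,\bar s]$ with $B^{d_R}(e,cs_0)\subseteq B^{d_C}(e,1)$ and $\rho>0$ with $B^{d_C}(e,\rho)\subseteq B^{d_R}(e,s_0)$, and left-invariance makes these inclusions hold at every base point, so a $cs_0$-bounded cover with $s_0$-multiplicity at most $n+1$ for $d_R$ is automatically $1$-bounded with $\rho$-multiplicity at most $n+1$ for $d_C$. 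Dilating finishes the proof, exactly as in your own reduction step. If you replace the ball-box/lattice construction by this two-line metric comparison, your argument closes; as written, it does not.
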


\proof
Let $G$ be a Carnot group of topological dimension $n$. 
Let $d_C$ be the Carnot-Carath\'eodory distance, 
and let $d_R$ be a left-invariant Riemannian distance. 
It follows from Lemma~\ref{local} (see Example \ref{ex:Liegroup}) that $(G,d_R)$ has linearly controlled dimension $n$, 
with parameters $c,\bar s$, say.
Since $d_C$ and $d_R$ give the same topology on $G$, we can fix a scale 
$s_0 \in (0,\bar s]$ such that $B^{d_R}(e,cs_0) \subseteq B^{d_C}(e,1)$, where $e$ 
is the identity element of $G$; furthermore there exists $\rho > 0$ such that 
$B^{d_C}(e,\rho) \subseteq B^{d_R}(e,s_0)$. Since both $d_C$ and $d_R$ are 
left-invariant, this means that $d_R(p,q) \le cs_0$ implies $d_C(p,q) \le 1$,
and $d_C(p,q) \le \rho$ implies $d_R(p,q) \le s_0$.
Now let $\mathcal{B} $ be a $cs_0$-bounded cover, with $s_0$-multiplicity at 
most $n+1$, of $(G,d_R)$. With respect to $d_C$, $\mathcal{B} $ is $1$-bounded and 
has $\rho$-multiplicity at most $n+1$. For every $\lam > 0$, there exists a 
dilation of $(G,d_C)$ by the factor $\lam$, that is, a bijection 
$\del_\lam \colon G \to G$ such that $d(\del_\lam(p),\del_\lam(q)) 
= \lam d(p,q)$ for all $p,q \in G$.
Applying $\del_\lam$ to the members of $\mathcal{B} $, we obtain a $\lam$-bounded 
cover of $(G,d_C)$ with $\lam \rho$-multiplicity at most $n+1$.
Thus $(G,d_C)$ has Nagata dimension at most $n$, with
constant $1/\rho$. On the other hand, from \cite[Theorem 2.2]{Lang-Schlichenmaier} we have $\dim_N(G,d_C)\geq n$. 
\qed
\\


\noindent
\textbf{Proof of Corollary \ref{corollarioSRM}.}
Carnot-Carath\'eodory distances are locally doubling (this fact is proved in \cite{nagelstwe}, however, notice that Theorem~\ref{teorema1b} gives an alternative proof).
From \cite{Lang-Schlichenmaier} (or from Theorem~\ref{thm:nagaasso}), we deduce that the Nagata dimension of a compact set of a subRiemannian manifold is finite.
Hence,  from Theorem~\ref{teorema1}, together with  Theorem~\ref{Mitchell:thm} and Theorem~\ref{Lang_LeDonne:thm}, we have Corollary \ref{corollarioSRM}.
\qed

\begin{remark}
In the case that the subRiemannian manifold is not equiregular, one can still use the fact that
the set of regular points for the subbundle is an open dense subset of the manifold.
Here, a point $p$ is called {\em regular} if it has a neighborhood on which the   subbundle is equiregular. Hence, we have that 
 there exists a neighborhood of $p$ whose Nagata dimension  equals   the topological dimension of the manifold.
 \end{remark}

 \begin{remark}
 Probably  the most famous  Carnot-Carath\'eodory space that is not  an equiregular subRiemannian manifold is the Grushin plane.
 In this example the dimension of $\Delta_q$ varies with $q$.
 Also in this case the Nagata dimension is equal to the topological one. Indeed, recently Meyerson 
 gave simple examples of  quasi-symmetric maps between the Grushin plane and the Euclidean plane. 
 In the usual coordinates, an example of such  maps is $F(x, y) = (x|x|, y)$, see \cite{Meyerson}.
  On the other hand, Lang and Schlichenmaier in \cite{Lang-Schlichenmaier} proved that the Nagata dimension is a quasi-symmetric invariant.
  Therefore,  the Nagata dimension of the Grushin plane is two, like the topological dimension.
  \end{remark}

On compact sets, a Carnot-Carath\'eodory distance $d_C$ and a Riemannian distance $d_R$ on $G$ satisfy the inequalities
\begin{equation}\label{partial:snow}
 k \,(d_C)^m \leq d_R\leq d_C,
 \end{equation}
 for some  $m\in \N $ and $k\in(0,1)$, see \cite{nagelstwe}.
 Because snowflaking a distance preserves the Nagata dimension, it is natural to wonder if partial snowflaking as in \eqref{partial:snow} preserves it as well, giving an alternative method of proof for Theorem~\ref{Lang_LeDonne:thm} and  
  Corollary \ref{corollarioSRM}.
 We give an example showing that this is not the case.

\begin{example}
We introduce  a topological space $X$ with two proper distances $d_1$ and $d_2$ with the property that
$$d_1^2 \leq d_2\leq d_1,$$
but $\dim_N(X,d_1)\neq\dim_N(X,d_2) $.

The space $X$ is of the form $\{p_{n,j} :{n\in\N,j=0,\ldots,n }\}$ endowed with the discrete topology.
The distances are 
$$d_1(p_{n,j},p_{n',j'})= \left\{\begin{array}{ccl}\left|\dfrac{1}{2^n}-\dfrac{1}{2^{n'}}\right|,&\qquad& \text{if } n\neq n',\\
\dfrac{1}{2^n},&\qquad &\text{if }  n= n', j\neq j'\\
0,&\qquad &\text{if }  n= n', j= j'
\end{array}\right.$$
and
$$d_2(p_{n,j},p_{n',j'})= \left\{\begin{array}{ccl}\left|\dfrac{1}{2^n}-\dfrac{1}{2^{n'}}\right|,&\qquad& \text{if } n\neq n',\\
\dfrac{|j-j'|}{n2^n},&\qquad &\text{if }  n= n'.
\end{array}\right.$$
One can easily check that these are distance functions, that satisfy $d_1^2 \leq d_2\leq d_1,$ and that 
$\dim_N(X,d_1)=0$ but $\dim_N(X,d_2) \neq0$. The only calculation that is not completely straightforward is that $\dim_N(X,d_2) \neq0$. For doing this, one assumes that the dimension is $0$ with respect to some constant $c$. Then we fix $n\in \N$ and let $s=\dfrac{1}{n2^n}$. Take a    $cs$-bounded cover with $s$-multiplicity
at most $  1$. This last property implies that $p_{n,0}$ and $p_{n,1}$, which have distance $s$, need to be contained in the same element $U$ of the cover. Likewise, $p_{n,j}\in U$, for all $j=0,\ldots, n$.
Thus $\dfrac{1}{2^n}\leq \diam U\leq c s = c \dfrac{1}{n2^n}.$
So $c>n$, for any $n\in \N$, a contradiction.
 \end{example}

\section{Assouad dimension bounds Nagata dimension}\label{Sec:nagaasso}


Denoting by $ \dim X $ the topological dimension of a metric space $X$ and by  $\dim_HX $ its Hausdorff dimension,
recall that 
one has the   chain of inequalities
\[
 \dim X \le \dim_HX   \le \dim_AX.
\]
From the bound proven in this section, we will conclude  that we also have
 the inequalities
\[
 \dim X \le \dim_NX  \le \dim_AX,
\]
where the first inequality is obtained in \cite{Lang-Schlichenmaier}.
In \cite{Lang-Schlichenmaier},  Lang and Schlichenmaier also proved  the inequality  $\dim_N X\leq 3^{\dim_A X}-1.$
We optimally improve it.
We should also point out that in general there is no relation between  $\dim_N $ and $\dim_H$. As examples,
 on one hand $\dim_N\Q = 1 > 0 = \dim_H\Q$, and on the other hand
 $\dim_N(\R,|\cdot|^\frac12) = 1 < 2 = \dim_H(\R,|\cdot|^\frac12)$. 
After the proof, in Theorem~\ref{thm:nagaasso2} we show how the argument can be modified
to give a bounded-scale version of the same result.
\\

\noindent
{\bf Proof of Theorem~\ref{thm:nagaasso}.}
Let $X$ be a metric space.
We shall prove that $ \dim_NX  \le \dim_AX$. 
 Without loss of generality we may assume $\dim_AX < \infty$.
 Take $\alpha$ so that $\dim_AX < \alpha < \lfloor \dim_AX \rfloor + 1$. From the definition of the Assouad dimension we know that there is some
 constant $C>0$ such that, for any $x \in X$ and $0 < r < R < \infty$, 
 \begin{equation}\label{eq:Assouad}
  \text{we need at most }C\left(\frac{R}{r} \right)^\alpha \text{ balls of radius }r\text{ to cover the set } B(x,R)\cap X.
 \end{equation}

We shall fix $r\ll R$ to be determined later in terms of $C$ and $\alpha$ only.
The idea of the proof is the following.
First decompose $X=:X_0$ as
$$X_0=   X_1 \sqcup Y_1^1  \sqcup Y_2^1 \sqcup \cdots$$
with the properties that:
diam$(Y_n^1)<2 R$, $\dist(Y_n^1,Y_m^1)>r,$ for all $n\neq m$, and all balls 
$B(x,R)\cap X_1$, with $x$ in $X_1$, need at most
$C_1\left(\frac{R}{r} \right)^{\alpha-1}$ 
 balls of radius $r$ to cover them.
Here $C_1$ is a constant  depending only on $\alpha$ and $C$.
Hence, roughly speaking, the subset $X_1$ has codimension one on scales $R$ and $r$.
Then we will iterate the decomposition  $X_{k-1}= X_k    \sqcup Y_1^k  \sqcup Y_2^k \sqcup \cdots$
with similar properties for $Y_n^k$. As soon as $k>\alpha$ and both $R$ and $r$
are properly chosen, the subset $X_k$ is empty.
The collection $\{Y_n^k   :  {k=1},\ldots,    \lfloor \alpha \rfloor + 1 , \, n=1 ,\ldots,{N_k} \}$ gives a cover  showing that the Nagata dimension is less than  $\alpha$.

 Take $R >0$ and let $\{x_i\}_{i=1}^{N_1} \subset X$ be a maximal $\frac{R}4$-separated net of points.
 (It might be that $N_1 = \infty$.
 However, recall that metric spaces with finite Assouad dimension are separable and hence separated nets are countable.)

 Notice that in particular the 
 balls $B(x_i,\frac{R}2)$ cover the set $X$. Take $0< r < \frac{R}4$. 
 For each $x_n \in \{x_i\}$ there exists by \eqref{eq:Assouad} a collection $\mathcal{B} = \{B(y_i,r)\}$ of at most
 $C\left(\frac{R}{r} \right)^\alpha$ balls covering the larger ball $B(x_n,R)$. Let $k \in \N$ be such that
 $kr<\frac{R}2 \le (k+1)r$.
 Consider the annular regions
 \[
  A_{n,i} := B(x_n,R-ir) \setminus B(x_n,R-(i+1)r).
 \]
 The collection $\{A_{n,i}\}_{i=0}^{k-1}$ is disjointed and
 \[
  \bigcup_{i=0}^{k-1}A_{n,i} = B(x_n,R) \setminus B(x_n,R-kr) \subset B(x_n,R) \setminus B(x_n,R/2).
 \]
 Observe that any ball $B \in \mathcal{B}$ intersects at most $3$ of the annular regions $A_{n,i}$.

 Therefore there exists some $0 \le k_n \le k-1$ such that
 the annulus $A_{n,k_n}$ meets at most
 $  \frac3kC\left(\frac{R}{r}\right)^\alpha $ balls of the collection 
 $\mathcal{B}$.
 Notice that we can bound 
 \begin{equation*}
  \frac3kC\left(\frac{R}{r}\right)^\alpha \le \frac{3r}{\frac{R}{2}-r}C\left(\frac{R}{r}\right)^\alpha
  \le 12C\left(\frac{R}{r}\right)^{\alpha-1}  .
 \end{equation*} 
 Then, in particular, 
  \begin{equation}\label{eq:-1cover}
 \text{for all } n, \text{ the number of balls
 needed to cover } A_{n,k_n}\text{ is at most } 12C\left(\frac{R}{r}\right)^{\alpha-1} . 
 \end{equation}

 Set $X_1 := \bigcup_{n}A_{n,k_n}$. 
 Now, take a point $x \in X_1$. 
  We claim that 
 \begin{equation}\label{eq:samesizeintersect}
 \text{ the ball } B(x,R) \text{  intersects at most } C16^\alpha \text{ annular regions } A_{i,k_i}.
 \end{equation}
 Indeed, the number of annular regions that $B(x,R)$ meets is less than the number of balls 
 in $\mathcal{B}$ that it meets.
 Hence, we need to estimate the number of elements of the net $\{x_i\}_i$ in $B(x,2R)$.
 By \eqref{eq:Assouad}, we need at most $C\left(\frac{2R}{R/8}\right)^\alpha$ balls of radius $R/8$ to cover the ball $B(x,R)$.
 Since the elements of the net are $R/4$ separated, any two of them are in different such balls of radius $R/8$. 
 Hence the number of  $x_i$'s inside $B(x,2R)$ is at most
 $$  C\left(\frac{2R}{R/8}\right)^\alpha = C16^\alpha.$$
 

 Therefore, by combining \eqref{eq:-1cover} and \eqref{eq:samesizeintersect}
 we see that we need at most $C_1 \left(\frac{R}{r}\right)^{\alpha-1}$
 balls of radius $r$ to cover $X_1 \cap B(x,R)$, where $C_1:=  C^216^\alpha12$.

 Define, for all $n$, 
 \[
  Y_n^1 := B(x_n, R-(k_n+1)r) \setminus \bigg(X_1\cup\bigcup_{i=1}^{n-1} B(x_i, R-k_ir)\bigg).
 \]
 What we have now obtained is a decomposition of $X$ into disjointed collection $\{Y_n^1\}_{n=1}^{N_1} \cup \{X_1\}$ 
 with $Y_n^1$ having diameter less than $2R$ and the property that $\dist(Y_i^1,Y_j^1) \ge r$ for any $i \ne j$.

 Repeating the above argument we can decompose $X_1$ into disjointed collection $\{Y_n^2\}_{n=1}^{N_2} \cup \{X_2\}$
 with $Y_n^2$ again having diameter less than $2R$ and the property that $\dist(Y_i^2,Y_j^2) \ge r$ for any $i \ne j$,
 and $X_2$ such that for any point $x \in X_2$ we need at most $C_2\left(\frac{R}{r}\right)^{\alpha-2}$
 balls of radius $r$ to cover the set $X_2 \cap B(x,R)$, where $C_2$ depends only on $C$ and $\alpha$.

 We continue this for $m = \lfloor \alpha \rfloor + 1$ steps so that we have a decomposition of $X$ into
 disjointed collection
 \[
  \bigcup_{k=1}^m\{Y_n^k\}_{n=1}^{N_k}\cup\{ X_m\}
 \]
 with all $Y_n^k$ having diameter less than $2R$ and $\dist(Y_i^k,Y_j^k) \ge r$ for any $i \ne j$.
 But now at the last iteration, step $m$, when we have arrived to \eqref{eq:-1cover} we notice that there exists an annular region
 that intersects at most
 \[
  12C_{m-1}\left(\frac{R}{r}\right)^{\alpha-m}
 \]
 balls of the cover. Provided that we have chosen the ratio $\frac{r}{R}$ to be small enough from the beginning,
 this means that the annular region is empty, since $\alpha - m < 0$. The conclusion is that $X_m = \emptyset$.

Hence, we have $$ X= \bigcup_{k=1}^m\bigcup_{n=1}^{N_k} Y_n^k.$$
 Now, given any $x \in X$, for any $k=1, \ldots, m$,  the ball $B(x,r/2)$ intersects at most one set from each collection $\{Y_n^k\}_{n=1}^{N_k}$.
 Thus, $\dim_NX\leq \lfloor \alpha \rfloor \le \dim_AX$.
\qed\\

The previous proof generalizes to the case of local dimensions of metric spaces that admit well-ordered separated nets.
For example, this is the case in separable metric spaces. 
Nevertheless, if we assume the Axiom of Choice, any net can be well-ordered.

\begin{theorem}\label{thm:nagaasso2}
 Let $X$ be a metric space. Let $\alpha \ge 0$, {$C > 0$} and $R > 0$. Assume that, for all $x \in X$, we have
 {$\dim_A(B(x,R),C,R) \le \alpha$}. Then $\dim_{LC}X \le \alpha$.
\end{theorem}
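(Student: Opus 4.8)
The plan is to run the decomposition in the proof of Theorem~\ref{thm:nagaasso} essentially verbatim, but at a variable \emph{working scale} $R' < R/2$ rather than at a single fixed global scale, and then to read off a bounded-scale Nagata cover at every scale below $R$. The key observation is that the only places where the global Assouad estimate \eqref{eq:Assouad} was used in that argument are (a) covering a ball $B(x_n,R')$ by balls of radius $r$, and (b) covering a ball $B(x,2R')$ by balls of radius $R'/8$ in order to bound the number of net points it contains, as in \eqref{eq:samesizeintersect}. Both invocations involve only radii strictly below $2R'$, so as long as $2R' < R$ the local hypothesis $\dim_A(B(x,R),C,R)\le\alpha$ supplies exactly what is needed: for every center, for all $0<r<R''<R$, any ball of radius $R''$ is coverable by $C(R''/r)^\alpha$ balls of radius $r$.

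Concretely, I would fix the ratio $\theta := r/R'$ depending only on $C$ and $\alpha$, chosen small enough that the iteration terminates with $X_{\lfloor\alpha\rfloor+1}=\emptyset$, exactly as in the previous proof. Here it is important that the bookkeeping constants $C_1,C_2,\dots$ appearing there depend only on $C$ and $\alpha$ and not on the scale, so a single $\theta$ works simultaneously for every admissible $R'$. Carrying out the $\lfloor\alpha\rfloor+1$ decomposition steps at working scale $R'\in(0,R/2)$ then produces a disjoint family $\bigcup_{k=1}^{\lfloor\alpha\rfloor+1}\{Y_n^k\}_n$ covering $X$, with $\diam Y_n^k<2R'$ and $\dist(Y_i^k,Y_j^k)\ge \theta R'$ whenever $i\ne j$ within the $k$-th collection, just as before.

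It remains to translate this into the bounded-scale notation of Definition~\ref{lin con dim}. Setting $s:=2R'$ and $c:=\theta/2$, the family above is an $s$-bounded cover that splits into $\lfloor\alpha\rfloor+1$ subfamilies, each of which is $cs$-separated; hence $\dim_N(X,c,s)\le\lfloor\alpha\rfloor$. As $R'$ ranges over $(0,R/2)$ the scale $s=2R'$ ranges over $(0,R)$ while $c=\theta/2$ stays fixed, so $\dim_N(X,c,s)\le\lfloor\alpha\rfloor$ for all $0<s<R$. By Definition~\ref{lin con dim} (with $s_0=R$) this yields $\dim_{LC}X\le\lfloor\alpha\rfloor\le\alpha$.

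The main obstacle is one of bookkeeping rather than a new idea: I must verify that the termination of the induction and the separation/boundedness constants are genuinely scale-invariant, so that one ratio $\theta$ and one constant $c$ serve all scales $s<R$ at once; this is precisely what upgrades the single-scale conclusion of Theorem~\ref{thm:nagaasso} to the linearly-controlled (all-small-scales) conclusion here. A secondary, more technical point is that the sets $Y_n^k$ are defined by removing from each ball the previously selected balls along an enumeration of the maximal separated net $\{x_i\}$, so the net must be well-ordered. This is automatic when $X$ is separable, in which case the net is countable, and in general it is guaranteed by the Axiom of Choice, as remarked just before the statement.
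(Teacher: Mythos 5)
Your proposal is correct and follows essentially the same route as the paper: rerun the decomposition of Theorem~\ref{thm:nagaasso} at each working scale below $R$, noting that the ratio $r/R'$ and the constants $C_1,C_2,\dots$ depend only on $C$ and $\alpha$, and handle a possibly uncountable net via well-ordering. The paper's own proof merely emphasizes the well-ordering point (which you correctly flag as the only genuinely new technical issue) and declares the rest identical.
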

\begin{proof}
 The strategy of the proof is very similar to the one of Theorem~\ref{thm:nagaasso}. Hence we only explain the differences.
Instead of having a countable maximal $\frac{R}4$-separated net of points  $\{x_i\}_i$, we now have just a maximal
$\frac{R}4$-separated net $\{x_j\}_{j\in J}$ where $J$ is a general set of indices.

By the Well-Ordering Theorem the set $J$ admits a total order such that every nonempty subset of $J$ has a least element for this
ordering. (This result is also known as Zermelo's theorem. It follows easily from Zorn's lemma and it is actually equivalent to it.)
For all $j \in J$, we select an annulus $A_{j,k_j}$ as in the  proof of Theorem~\ref{thm:nagaasso}. Then we set 
$X := \bigcup_{j \in J}A_{j,k_j}$ and
\[
 Y_j^1 := B(x_j, R-(k_j+1)r) \setminus \bigg(X_1\cup\bigcup_{i < j} B(x_i, R-k_ir)\bigg).
\]

The only nontrivial property to check is that $\{Y_j^1\}_{j \in J}$ together with $X_1$ is a cover.
Pick $x \in X$. Set
\[
 E_x := \{j \in J \,:\, x \in B(x_j,R-k_jr)\}.
\]
The set $E_x$ is nonempty, since $R-k_jr > \frac{R}2$. By the well-ordering, we have a least element $j_x \in E_x$.
Thus $x$ is in $B(x_{j_x}, R-k_{j_x}r)$ but not in any of the $B(x_i,R-k_ir)$ for $i < j_x$.
If $x$ is not in $X_1$ (and hence not in $A_{j_x,k_{j_x}}$),
we have that $x$ is in $Y_{j_x}^1$.

The rest of the proof is exactly the same.
\end{proof}

%
%
%
%
%
%
%

\bibliography{general_bibliography}
	\bibliographystyle{amsalpha}

\end{document}